\newcommand{\Ical}{{\mathcal{I}}}
\newcommand{\argmin}{\text{argmin}}
\newcommand{\Rank}{\mathrm{Rank}}
\newtheorem{theorem}{Theorem}[section]
\newtheorem{lemma}[theorem]{Lemma}
\newtheorem{assumption}[theorem]{Assumption}
\newtheorem{proposition}[theorem]{Proposition}
 \theoremstyle{definition}
\newtheorem{definition}[theorem]{Definition}
\theoremstyle{remark}
\newtheorem{remark}[theorem]{Remark}
\newtheorem{example}{Example}[section]
\providecommand{\keywords}[1]
{
	\small
	\textbf{\textit{Keywords---}} #1
}
\begin{document}
\date{}

\title{Efficient Low-rank Identification via Accelerated Iteratively Reweighted Nuclear Norm Minimization}

\author[1]{Hao~Wang}
\author[1]{Ye~Wang}
\author[2]{Xiangyu~Yang*}
\affil[1]{School of Information Science and Technology, ShanghaiTech University, Shanghai 201210, China}
\affil[2]{Center for Applied Mathematics of Henan Province, Henan University, Zhengzhou, 450046, China}
\affil[1]{\textit {\{wanghao1,wangye\}@shanghaitech.edu.cn}}
\affil[2]{\textit {yangxy@henu.edu.cn}}
\maketitle

\begin{abstract}
 This paper considers the problem of minimizing the sum of a smooth function and the Schatten-$p$ norm of the matrix. Our contribution involves proposing accelerated iteratively reweighted nuclear norm methods designed for solving the nonconvex low-rank minimization problem. Two major novelties characterize our approach. Firstly, the proposed method possesses a rank identification property, enabling the provable identification of the "correct" rank of the stationary point within a finite number of iterations. Secondly, we introduce an adaptive updating strategy for smoothing parameters. This strategy automatically fixes parameters associated with zero singular values as constants upon detecting the "correct" rank while quickly driving the rest parameters to zero. This adaptive behavior transforms the algorithm into one that effectively solves smooth problems after a few iterations, setting our work apart from existing iteratively reweighted methods for low-rank optimization.  We prove the global convergence of the proposed algorithm, guaranteeing that every limit point of the iterates is a critical point. Furthermore, a local convergence rate analysis is provided under the Kurdyka-{\L}ojasiewicz property. We conduct numerical experiments using both synthetic and real data to showcase our algorithm's efficiency and superiority over existing methods. 
\end{abstract}


\keywords{Low-rank minimization, nonconvex Schatten-$p$ norm, rank identification, extrapolation, Kurdyka-{\L}ojasiewicz property}

\section{Introduction}\label{Sec_Introx}
We consider the following regularized nonconvex matrix optimization problem 
\begin{equation}\label{pro_pri}\tag{$\mathcal{P}$}
	\min\limits_{X\in \mathbb{R}^{m\times n} }  F (X):= f (X)+\lambda\|X\|_{p}^{p},
\end{equation} 
where the loss term $f:\mathbb{R}^{m\times n} \to \mathbb{R}$ is twice continuously differentiable and the regularization term $\Vert X \Vert_{p} = \left( \sum_{i=1}^{\min\{m,n\}} \sigma_i(X) \right)^{1/p}$ is commonly referred to as the nonconvex Schatten-$p$ norm\footnote{It is a matrix quasi-norm when $0 <p <1$. We call it a norm for convenience.} with $p\in(0,1)$, and $\sigma_{i}(X)$ is the $i$th element of the singular value vector of $X$. The parameter $\lambda > 0$ is tunable,  providing a proper trade-off between the loss and regularization terms. Throughout our discussion, we assume without loss of generality that $m \leq n$.

The problem of the form \eqref{pro_pri} incorporates an (approximate) low-rank assumption for the desired solution. Indeed, \eqref{pro_pri} is generally regarded as a potent nonconvex relaxation of the following rank-regularized formulation:
\begin{equation}\label{Eq:RankModel}
	\min\limits_{X \in \mathbb{R}^{m\times n}}  f (X) + \lambda \Rank (X).
\end{equation}
Note that $\Rank(X) = \Vert \bm{\sigma}(X)\Vert_{0}$, and hence the Schatten-$p$ norm of $X$ approximates $\Rank(X)$ in the sense that $\lim_{p \to 0^{+}} \Vert X\Vert_{p} = \Rank(X)$. Problem \eqref{Eq:RankModel} models many important problems that emerged in science and engineering fields, including quality-of-service  (QoS) prediction \cite{LRMM_APP2QoSDATA_IEEE2017}, 
recommender systems \cite{lee2016llorma}
machine learning  \cite{indyk2019learning} 
and image processing  
\cite{LRMR_APP2ImgProcess_TIP2014,LRMR_APP2ImgProcess_SP2020}. However, such a matrix rank minimization problem \eqref{Eq:RankModel}  is a well-known NP-hard problem \cite{review_LRR_2021}.
In this regard, problem \eqref{pro_pri} serves as an efficient alternative to \eqref{Eq:RankModel}. Therefore, it arises in an incredibly wide range of settings throughout science and applied mathematics \cite{lee2016llorma,chiang2018using,jun2019bilinear,tong2021accelerating,pal2023online}. In particular, such an optimization model \eqref{pro_pri} is used in many modern machine learning tasks. Examples include low-rank features learning \cite{wang2019robust}, Multi-View learning \cite{liu2015low}, and Transfer learning \cite{lin2019non}, to name just a few.

A commonly employed approach for addressing \eqref{pro_pri} is the Iteratively Re-Weighted Nuclear  (IRWN) norm-type algorithm, which falls under the majorization-minimization algorithmic framework. The nonsmooth and non-Lipschitz properties of the Schatten-$p$ norm typically prompt researchers to initiate their exploration with a smoothed objective function:
\begin{equation}\label{pro_reg_eps}
	F_{\bm{\epsilon}}(X) := f(X) + \lambda \sum_{i=1}^{m} (\sigma_{i} (X) + \epsilon_i)^{p},
\end{equation}
where $\epsilon_{i} > 0,\forall i\in[m]$ refer  to the perturbation parameters. The modified function $F_{\bm{\epsilon}}(X)$ adjusts $F(X)$ by introducing a perturbation parameter to each singular value.  During the $k$th iteration with the iterate $X^k$,  IRWN effectively generates the new update $X^{k+1}$ by (approximately) solving 
\begin{equation}\label{eq:GeneralSub}
	X^{k+1} \longleftarrow \argmin_{X \in \mathbb{R}^{m\times n}} F_{\textrm{surro}}(X;X^{k}),
\end{equation} 
\begin{equation}\label{eq:GeneralSubObj}
	F_{\textrm{surro}}(X;X^{k}) :=  \langle \nabla f(X^k), X - X^k\rangle + \frac{\mu}{2}\Vert X - X^k \Vert_{F}^2+ \lambda\Vert X \Vert_{* \bm{w}^{k}},
\end{equation}
where the positive number $\mu$ is generally required to exceed the Lipschitz constant of the smooth loss term $f$, and $	\|X\|_{*\bm{w}^k} = \sum_{i=1}^{m} w_{i}^k\sigma_{i}(X)$ serves as a surrogate for $\Vert X\Vert_p^{p}$ at $X^{k}$. Here, $w_i^k \geq 0, \forall i \in [m] $ represents the weight assigned to $\sigma_{i}(X)$: 
$w_i^k=p(\sigma_i(X^k)+\epsilon_i^k)^{p-1}$.

The major difference between variants of IRWN may be the updating rule for the perturbation $\bm{\epsilon}^{k}$, since the values of $\bm{\epsilon}^{k}$ are critically linked to the well-posedness and solvability of the subproblem \eqref{eq:GeneralSub}.  
As is proved in \cite[Theorem 2.2]{chen2013reduced},  to guarantee  $\Vert X\Vert_{*\bm{w}}$ is indeed a convex matrix-norm, the weights must 
be in descending order. This requirement proves impractical within the context of IRWN, since sufficiently small $\bm{\epsilon}^{k}$ leads to weights in ascending order.  On the other hand,  the subproblem is nonconvex, yet it boasts a closed-form optimal solution \cite{IRSVM_LuZhaoSong_2014} when the weights are arranged in ascending order. One simple approach is to maintain $\bm{\epsilon}$ as sufficiently small positive constants during the iteration \cite{opt_simu_svd_2017} to maintain the weights in ascending order. As such, it indeed solves the relaxed problem  \eqref{pro_reg_eps} as its goal. 
It is generally believed that \eqref{pro_reg_eps}  approximates the original problem \eqref{pro_pri} well only for sufficiently small $\bm{\epsilon}$. 
Fixing  $\bm{\epsilon}^{k}$ as sufficiently small values (especially those associated with the zero singular values of the initial points) may cause the algorithm easily trapped into undesired local minimizers near the initial point. 
A natural idea to remedy this strategy is to use the same perturbation value for each singular value, i.e., $\bm{\epsilon}^{k} = \epsilon^k \bm{e}$, and then decrease $\bm{\epsilon}^k$ during the iteration. In this way, the performance of the algorithm critically depends on the speed of driving $\bm{\epsilon}^k$ to zero. It is conceivable that reducing  $\epsilon_i^k$ associated with the zero singular values of the iterates too fast may lead to undesired local minimizers, and reducing  $\epsilon_i^k$ associated with the positive singular values of the iterates too slow may cause the algorithm sluggish.  
An ideal updating strategy should be able to quickly detect those zero singular values in the found optimal solution, and then automatically terminate the decrease for $\epsilon_i^k$ assigned to them and at the same time keep driving other $\epsilon_i^k$ rapidly to zero.  
Another benefit of such a strategy is that the $\epsilon_i^k$ associated with the zero singular value does not affect the objective value near 
the optimal solution, and the algorithm's behavior then only depends on the positive singular value and the decreasing speed of 
the rest $\epsilon_i^k$. However, such an updating strategy may be sophisticated and challenging to design since the user typically lacks prior knowledge of the rank of the final solution until the entire problem is resolved. 

In this paper, we propose an Extrapolated Iteratively Reweighted  Nuclear norm with Rank Identification (EIRNRI) to solve \eqref{pro_pri}.  We first add perturbation parameters $\epsilon_i$ to each singular value of the matrix to smooth the Schatten-$p$ norm.  Then we construct the weighted nuclear ``norm''\footnote{It is indeed not a norm since it is not convex.} subproblem of the approximated function combined with an extrapolation technique.  An adaptive updating strategy for $\bm{\epsilon}$ is also designed, such that it can automatically terminate the update for $\epsilon_i$ associated with the zero singular values and drive those associated with the positive singular values quickly to zero.
This updating strategy keeps the weights in ascending order so that the subproblem is nonconvex but has a closed-form optimal solution \cite[Theorem 3.1]{IRSVM_LuZhaoSong_2014}.
Our algorithm is designed in a way such that after finite iterations, the algorithm can automatically detect those zero singular values in the optimal solution, meaning the algorithm eventually behaves like solving a smooth problem in the Grassmann manifold. 
Based on this,  the local convergence rate can be easily derived. It should be mentioned that our work mainly considers applying the proposed algorithm to solve the representative Schatten-$p$ regularized problem \eqref{pro_pri}, however, it is important to note that the proposed algorithm can be extended to other nonconvex regularization functions of the singular values quite straightforwardly, including the Logarithm \cite{friedman2012fast}, Exponential-type Penalty \cite{gao2011feasible}, Geman \cite{geman1995nonlinear}, Laplace \cite{trzasko2008highly}, Minimax Concave Penalty \cite{zhang2010nearly} and Smoothly Clipped Absolute Deviation (SCAD) \cite{fan2001variable}, and more. 

\subsection{Related Work}
Over the last decade, significant attention has been directed towards low-rank optimization, yielding theoretically and practically efficient algorithms applicable to diverse problems in signal processing and modern machine learning. Within the extensive body of work, we specifically review the most relevant works.

\textbf{IRWN-type algorithms.} The work of \cite{opt_simu_svd_2017} proposed a proximal iteratively reweighted nuclear norm (PIRNN) algorithm. Their algorithm adds a prescribed positive perturbation parameter $\epsilon_{i}$ to each singular value $\sigma_{i} (X)$ and \emph{fixed} it during the iteration of the algorithm.
Therefore, it indeed solves the relaxed problem  \eqref{pro_reg_eps} as its goal. As a stark contrast,  our method is designed for the original problem \eqref{pro_pri} in the sense that the perturbation parameter is automatically driven to $0$
so that the iterates can successfully recover the rank of the first-order optimal solution to \eqref{pro_pri}. 
It should be stressed that our updating strategy is designed such that $\bm{\epsilon}$ is decreased to zero in an appropriate speed to maintain the the well-posedness of the subproblems and the convergence rate of the overall algorithm. 

The immediate predecessor of our work, to the best of our knowledge, is the Iteratively Reweighted Nuclear Norm (IRNN) algorithm proposed in \cite{ge_nn_LRMM_Canyi_2014} and its acceleration (AIRNN) introduced in \cite{Alg_conflict_AIRNN_2021,ge2022fast}.
IRNN considered a general concave singular value function $g(\sigma_{i}(X))$ as the regularization term. 
It first calculates the so-called supergradient of Schatten-$p$ norm $w_{i}^{k} \in \partial {g(\bm{\sigma}(X^{k}))}$ and uses it as the weight to form the subproblem.
In contrast to our method, this method does not involve the perturbation parameter $\bm{\epsilon}$; therefore, the weight may tend to extreme values as the associated singular value is close to $0$. (As for the zero singular value, this method uses an extremely large constant as the weight).
We suspect this might be the reason for the observation ``IRNN may decrease slowly since the upper bound surrogate may be quite loose.'' reported in \cite{ge_GSVT_LRMM_Canyi_2015}.
Then AIRNN used the extrapolation technique and computed the SVD of a smaller matrix at each iteration to accelerate IRNN.
The biggest difference between our algorithm to IRNN, AIRNN, and other contemporary reweighted nuclear norm methods is the rank identification property possessed by our algorithm, meaning the algorithm can identify the rank of the converged solution after finite iterations.  We elaborate on this in the next subsection. 

\textbf{Rank identification.} The major novelty of our work is the rank identification property of the proposed method, which is an extension of 
the model identification for vector optimization. 
In sparse optimization such as the LASSO or the support-vector machine,  problems generally generate solutions onto a low-complexity model such as solutions of the same supports.
For LASSO, a solution $x^{*}$ typically has only a few nonzeros coefficients: it lies on the reduced space composed of the nonzeros components (the support) of $x^*$. 
Model identification relates to answering the question of whether an algorithm can identify the low-complexity active manifold in finite iterations.
It has become a useful tool in analyzing the behavior of algorithms and has attracted much attention in the past decades in the research of machine learning algorithms in vector optimization. 
For example, coordinate descent \cite{klopfenstein2020model, ModelID_LASSO_AccCD_2018} for convex sparse regularization problems are proved to have model identification, and the convergence analysis is easily derived under this property.
In the last few years, proximal gradient algorithm \cite{ModelID_ProxGD_L1_2011, ModelID_LLcvgc_L1_2014,ModelID_LLcvgc_L1_2017} have been shown the model identification for the $\ell_{1}$ regularized problem.
Recently, the iteratively reweighted $\ell_{1}$ minimization for the $\ell_{p}$ regularized problem is also shown \cite{Zeng_Acc_2022,zenghao_lp2l1_adaEps_paper1}   to have model identification property.  This property also belongs to the research line of active-manifold identification in nonsmooth optimization \cite{lewis2002active, hare2007identifying}.  

While IRWN-type algorithms have been extensively studied, their capability for rank identification has received limited attention. A recent contribution by \cite{lee2023accelerating} explored the use of the proximal gradient method to identify the correct rank for a nuclear norm regularized problem. Notably, their algorithm can serve as a suitable subproblem solver for our approach. In a related vein, the work of \cite{zeng2023proximal} extended the lower bound theory of nonconvex $\ell_{p}$ minimization to Schatten-$p$ norm minimization and incorporates it as a prior in algorithm design. However, the rank identification property of their algorithm remains unverified.

In this paper,  we formalize the rank identification property as follows.

\begin{definition}[Rank identification property] An algorithm is said to possess the rank identification property if and only if for a sequence (or at least a subsequence) $\{X^k\}_{k \in \mathbb{N}_{+}}$ generated by the algorithm converges to a solution $X^*$, then there exists $K \in \mathbb{N}_{+}$ such that for each $k \geq K$, $X^k \in \mathcal{M}(X^*):= \{X \in \mathbb{R}^{m\times n}\mid \Rank(X) = \Rank (X^{*})\}$.
\end{definition}

Our algorithm is designed to possess this property, meaning the singular values of the generated iterates satisfy $\sigma_{i}(X^{k}) = 0,i\in\mathcal{Z}(X^{*})$ and $\sigma_{i}(X^{k}) > 0,i\in\mathcal{I}(X^{*})$ for all sufficiently large $k$, where $\mathcal{Z}(X^{*})$ is the set of indices corresponding the the zero singular values in the optimal solution and  $\mathcal{I}(X^{*})$  corresponds the nonzero singular values.  
Based on this, a adaptively updating strategy of $\epsilon$ can be straightforwardly designed to drive $\epsilon_i, i\in\mathcal{I}(X^{*})$ quickly to zero and automatically cease the updating for $\epsilon_i, i\in\mathcal{Z}(X^{*})$.  
In essence, this implies the algorithm behaves like solving a smooth problem in a low-complexity manifold, facilitating a straightforward derivation of global convergence analysis and application of acceleration techniques. 
To our knowledge,  this idea of designing an algorithm with model/rank identification property for the Schatten-$p$ norm is novel in the context of matrix optimization problems.  

\subsection{Contribution}
We summarize our main contributions in the following.  
\begin{itemize}
	\item We propose an iteratively reweighted nuclear norm minimization method for the nonconvex regularized problem, and the extrapolation techniques are also incorporated into the algorithm to further enhance its performance. 
	
	\item The key novelty of the proposed method is the adaptively updating strategy for updating the perturbation parameters, bringing two benefits: (i) automatic identification of parameters associated with zero and nonzero singular values, enabling the use of tailored update strategies for each. (ii) consistent maintenance of weights in ascending order, ensuring the explicit computation of a global minimizer for the (nonconvex) subproblem.
	
	\item We show that the algorithm possesses a rank identification property, which can successfully identify the 
	rank of the optimal solutions found by the algorithm within finite iterations.  This property, which is barely studied by the existing related work, signifies a distinct contribution. It implies a transition of the optimization problem to a smoother form in the vicinity of the optimal solution.

	\item Global convergence and local convergence rate under the Kurdyka-{\L}ojasiewicz (KL) property are derived for the proposed algorithm. 
	
\end{itemize} 

\subsection{Notation and Preliminaries}
Throughout the paper, we restrict our discussion to the Euclidean space of $n$-dimensional real vectors, denoted $\mathbb{R}^{n}$, and the Euclidean space of $m\times n$ real matrices, denoted $\mathbb{R}^{m\times n}$, where $m,n\in\mathbb{N}$.   $\mathbb{R}_{+}^{n}$ represents the non-negative orthant in $\mathbb{R}^{n}$ and $\mathbb{R}_{++}^{n}$ denotes the interior of $\mathbb{R}^{n}_{+}$. $\mathbb{R}_{\uparrow}^{n}$ and  $\mathbb{R}_{\downarrow}^{n}$ are used to indicate the set of non-decreasingly ordered vectors and non-increasingly ordered vectors, respectively. Additionally, we use the notation $[n] = \left\{1,2,\cdots, n \right\}$ to denote the integer set from $1$ to $n$, for any $n \in \mathbb{N}$.  
For any $\bm{x}, \bm{y} \in\mathbb{R}^{n}$,
the element-wise (Hadamard) product between $\bm{x}$ and $\bm{y}$ is given by $(\bm{x}\circ \bm{y})_{i} = x_{i}y_{i}$ for $i \in [n]$. By abuse of notation, $\circ$ is also used to denote function composition. Define the $\ell_{p}$-(quasi)-norm of $x \in \mathbb{R}^{n}$ as $\Vert x \Vert_{p} = (\sum_{i=1}^{n}\vert x_i\vert^{p})^{1/p}$.

For any $X, Y \in \mathbb{R}^{m \times n}$ (assuming $m\le n$ for convenience), the Frobenius norm of $X$ is denoted as $\left\|X\right\|_{F} $, namely, $\Vert X\Vert_{F} = \left(\sum_{i=1}^{m}\sum_{j=1}^{n}\vert X_{ij}\vert^2 \right)^{1/2}  = \mathrm{tr} \left(X^{\top}X\right)^{1/2}$.
The Frobenius inner product is $\langle X,Y\rangle=\mathrm{tr} \left(X^{\top}Y\right) $. 
Let $\mathrm{diag}(\bm{x}) $ denote the diagonal matrix with vector $\bm{x}$ on its main diagonal and zeros elsewhere.
The full singular value decomposition (SVD) \cite{general_svd_dec} of $X\in\mathbb{R}^{m \times n}$ is
\begin{equation*}\label{eq_svd_sorted}
	X = U\mathrm{diag} \left(\bm{\sigma} (X)\right) V^{\top},
\end{equation*}
where $(U,V) \in \overline{\mathcal{M}}(X)$ with $\overline{\mathcal{M}}(X):=\{(U,V) \in \mathbb{R}^{m\times n} \times \mathbb{R}^{m\times n} \mid U^{\top}U = V^{\top}V =I, X = U\textrm{diag}(\sigma(X))V^{\top}\}$  and $\bm{\sigma}(X) \in \mathbb{R}_{\downarrow}^{m} \cap \mathbb{R}_{+}^{m}$ denotes the singular value vector of $X$. Suppose ${\Rank}(X) = r \leq m$. The associated thin SVD of $X $ is $X = U_{r}  {\rm diag}(\bm{\sigma}_{r}(X)) V_{r}^{\top}$, where $U_{r}$ and $V_{r} $ are the first $r$ columns of $U$ and $V$, respectively, and $\bm{\sigma}_{r}(X) \in \mathbb{R}_{\downarrow}^{r} \cap \mathbb{R}_{+}^{r}$.

For analysis, we summarize the simultaneous ordered SVD of two matrices introduced in \cite{NA_sgv_1}.

\begin{definition}[Simultaneous ordered SVD]\label{Def:simultaneousSVD}
	We say that two real matrices $X $ and $Y $ of size $m\times n$ have a simultaneous ordered singular value decomposition if there exist orthogonal matrices $U\in\mathbb{R}^{m\times m} $ and $V\in\mathbb{R}^{n\times n} $ such that 
	\begin{equation*}
		X=U\mathrm{diag} (\bm{\sigma} (X))V^{\top} \quad\text{and}\quad Y=U\mathrm{diag} (\bm{\sigma} (Y))V^{\top}.
	\end{equation*}
\end{definition}
\noindent In addition, we define two index sets as follows to track the singular values of the iterates conveniently, which reads 
\begin{equation*}
	\mathcal{I}(X) := \left\{i:\sigma_i(X)>0\right\} \quad\text{and}\quad 
	\mathcal{Z}(X) := \left\{i:\sigma_i(X)=0\right\}. 
\end{equation*}

For a lower semi-continuous function $J:\mathbb{R}^{N}\to (-\infty,+\infty] $, its domain denoted by ${\rm dom} (J):=\{x\in\mathbb{R}^{N}:J (x)<+\infty\}$.  We first recall some concepts of subdifferentials that are commonly used in variational analysis and subdifferential calculus, which is a useful tool in developing optimality conditions of the concerned optimization problem in nonsmooth analysis. 

\begin{definition}[Subdifferentials] Consider a proper lower semi-continuous function. $\varphi :\mathbb{R}^{N}\to  (-\infty,+\infty] $.
	\begin{itemize}
		\item[(i)] The Fr\'{e}chet subdifferential $\widehat{\partial}\varphi $ of $\varphi$ at an $x\in\mathrm{dom}\ \varphi$ is analytically defined as
		\begin{equation*}
			\widehat{\partial}\varphi (x) := \left\lbrace v \in \mathbb{R}^{N}\mid\lim\limits_{u\to x}\inf\limits_{u\neq x} \frac{\varphi (u) -\varphi (x)-\langle v,u-x \rangle }{\|u-x\|_{2}} \ge 0\right\rbrace .      
		\end{equation*}
		
		\item[(ii)] The (limiting) subdifferential of $\partial \varphi$ of $\varphi$ at an $x\in \textrm{dom}\ \varphi$ is defined through the following closure process, which reads,
		\begin{equation*}
			\partial \varphi (x) := \left\lbrace v\in\mathbb{R}^{N}\mid\exists v^{k} \to v, x^{k} \overset{\varphi}{\to} x  \textrm{ with } v^{k}\in\widehat{\partial}\varphi (x^{k}) \textrm{ for all } k \right\rbrace .
		\end{equation*}
		where $x^{k} \overset{\varphi}{\to} x$ refers to $\varphi$-attentive convergence in analysis, meaning $x^{k} \to x$ with $\varphi(x^{k}) \to \varphi(x)$. 
	\end{itemize}
	We mention that $\widehat{\partial} \varphi (x)= \partial \varphi = \emptyset $ for $x\notin \mathrm{dom}\ \varphi$. 
	
\end{definition}

We next collect the result on the limiting subdifferential of the singular value function established in \cite{NA_sgv_1}. Upon that, we present the limiting subdifferential associated with the nonconvex Schatten-$p$ norm.

\begin{lemma}[Limiting subdifferential of singular value function]\label{Lemma:limitingsub_svd} \label{lem_partial_sp,opt_simu_svd_2017} Let $\varphi:\mathbb{R}^{n}\to\mathbb{R}$ be an absolutely symmetric function, meaning $\varphi (x_{1},\cdots,x_{n}) = \varphi(|x_{\pi (1)}|,\cdots,|x_{\pi(n)}|) $ holds for any permutation $\pi$ of $[n]$, and let $\bm{\sigma}(X)$ be the singular values of a matrix $X\in\mathbb{R}^{m\times n}$($n \leq m$ is assumed for convenience). Then the limiting subdifferential of singular value function $\varphi\circ\sigma $ at a matrix $X$ is given by
	\begin{equation*}
		\partial[\varphi\circ\bm{\sigma}] (X) = U\mathrm{diag} \left(\partial \varphi[\bm{\sigma} (X)]\right)V^{\top}, 
	\end{equation*}
	with $U\mathrm{diag} (\bm{\sigma} (X))V^{\top} $ being the SVD of $X$.
\end{lemma}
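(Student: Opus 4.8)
The plan is to treat $\varphi\circ\bm{\sigma}$ as an orthogonally invariant matrix function and to exploit the symmetry of both the function and the limiting subdifferential, reducing the matrix problem to the known spectral formula for symmetric functions of eigenvalues. The first observation I would record is that absolute symmetry of $\varphi$ makes $\Phi:=\varphi\circ\bm{\sigma}$ invariant under the two-sided orthogonal action $X\mapsto PXQ^{\top}$, since $\bm{\sigma}(PXQ^{\top})=\bm{\sigma}(X)$ for all orthogonal $P\in\mathbb{R}^{m\times m}$, $Q\in\mathbb{R}^{n\times n}$. Because $X\mapsto PXQ^{\top}$ is a linear isometry of $(\mathbb{R}^{m\times n},\langle\cdot,\cdot\rangle)$ with adjoint $G\mapsto P^{\top}GQ$, the calculus of the limiting subdifferential under this change of variables yields the equivariance $\partial\Phi(PXQ^{\top})=P\,\partial\Phi(X)\,Q^{\top}$. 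Taking $P=U^{\top}$ and $Q=V^{\top}$ for an SVD $X=U\mathrm{diag}(\bm{\sigma}(X))V^{\top}$ then reduces everything to computing $\partial\Phi$ at the diagonal matrix $\mathrm{diag}(\bm{\sigma}(X))$ and transporting the result back by $(U,V)$.

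For the diagonal case I would pass to the symmetric-matrix / eigenvalue setting, where the analogous identity is classical. Embedding $X$ as the symmetric matrix
\[
\mathcal{S}(X)=\begin{pmatrix}0 & X\\ X^{\top} & 0\end{pmatrix}\in\mathbb{R}^{(m+n)\times(m+n)},
\]
whose eigenvalues are $\pm\sigma_i(X)$, $i\in[\min\{m,n\}]$, together with $|m-n|$ zeros, I would rewrite $\Phi$ as $g\circ\bm{\lambda}$ for a suitable symmetric (in the ordinary sense) function $g$ on $\mathbb{R}^{m+n}$ manufactured from $\varphi$ via its absolute symmetry, where $\bm{\lambda}(\cdot)$ denotes the eigenvalue map. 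The known subdifferential formula for spectral functions of symmetric matrices, $\partial(g\circ\bm{\lambda})(A)=\{W\mathrm{diag}(d)W^{\top}: d\in\partial g(\bm{\lambda}(A)),\ W\text{ orthogonally diagonalizes }A\}$, then transfers to $\Phi$ once the $2\times 2$ block structure is unwound and the sign symmetry $\pm\sigma_i$ is matched against the absolute value appearing in $\varphi$.

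Concretely I would establish the two inclusions separately. For ``$\supseteq$'', given $w\in\partial\varphi(\bm{\sigma}(X))$, I would verify directly that $U\mathrm{diag}(w)V^{\top}$ obeys the defining subgradient inequality by invoking von Neumann's trace inequality $\langle A,B\rangle\le\langle\bm{\sigma}(A),\bm{\sigma}(B)\rangle$ to bound the first-order variation of $\Phi$ by that of $\varphi$ along the singular-value curve, first at the level of the Fr\'echet subdifferential and then for the limiting one. For ``$\subseteq$'', I would begin from a regular subgradient $G\in\widehat{\partial}\Phi(X)$: the equivariance above together with von Neumann's inequality forces $G$ to share the SVD factors $(U,V)$ of $X$, so $G=U\mathrm{diag}(w)V^{\top}$, and restricting the subgradient inequality to diagonal perturbations forces $w\in\widehat{\partial}\varphi(\bm{\sigma}(X))$. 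Passing to the limiting subdifferential is then a matter of taking $\varphi$-attentive limits and using the outer semicontinuity of both $\bm{\sigma}$ and $\partial\varphi$.

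The main obstacle will be the bookkeeping around multiplicities and zero singular values. When some of the $\sigma_i(X)$ coincide or vanish, the factorization $(U,V)\in\overline{\mathcal{M}}(X)$ is highly non-unique and the stabilizer of $X$ is a nontrivial product of orthogonal groups, so I must show that the right-hand side $U\mathrm{diag}(\partial\varphi[\bm{\sigma}(X)])V^{\top}$ does not depend on the chosen SVD and that the closure process defining $\partial$ creates no spurious subgradients. This is precisely where the absolute symmetry of $\varphi$ is indispensable: it guarantees that $\partial\varphi$ is invariant under the signed permutations acting within each equal-magnitude block, which is what makes the transported set well-defined, and the embedding into $\mathcal{S}(X)$ lets this well-definedness be inherited from the already-established symmetric eigenvalue case.
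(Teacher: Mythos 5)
The first thing to note is that the paper does not prove this lemma at all: it is collected verbatim from \cite{NA_sgv_1}, so there is no in-paper argument to measure your proposal against; the comparison must be with the proof in that reference, and your skeleton is essentially the same as the one there. Writing $\Phi=\varphi\circ\bm{\sigma}$ as you do, the ingredients you list are exactly the standard ones: two-sided orthogonal invariance and the resulting equivariance $\partial\Phi(PXQ^{\top})=P\,\partial\Phi(X)\,Q^{\top}$, von Neumann's trace inequality for the inclusion $\supseteq$, a simultaneous-ordered-SVD statement for Fr\'echet subgradients for the inclusion $\subseteq$, restriction to diagonal perturbations to land in $\widehat{\partial}\varphi(\bm{\sigma}(X))$, and $\varphi$-attentive limits plus compactness of the orthogonal group to upgrade from $\widehat{\partial}$ to $\partial$. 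Your closing paragraph about multiplicities and the invariance of $\partial\varphi$ under block signed permutations is also the correct reason why the right-hand side does not depend on the chosen pair $(U,V)$.

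Two steps, however, are genuine gaps as written. First, the Jordan--Wielandt detour does not deliver what you assign to it: $\mathcal{S}$ is linear and injective but not surjective, so while $\mathcal{S}^{*}\bigl[\widehat{\partial}(g\circ\bm{\lambda})(\mathcal{S}(X))\bigr]\subseteq\widehat{\partial}\bigl((g\circ\bm{\lambda})\circ\mathcal{S}\bigr)(X)$ is immediate, the reverse inclusion --- the one you need for the hard direction --- requires a chain-rule qualification of the type $\partial^{\infty}(g\circ\bm{\lambda})(\mathcal{S}(X))\cap\ker\mathcal{S}^{*}=\{0\}$, which is not automatic because $\varphi$, and hence $g$, is only assumed finite, not locally Lipschitz; the paper's own instance $\varphi=\Vert\cdot\Vert_{p}^{p}$ with $p\in(0,1)$ fails to be Lipschitz precisely at vectors with zero entries, which are exactly the points where the lemma is later applied for rank identification. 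This difficulty is presumably why the cited reference develops the singular-value theory directly rather than reducing to the eigenvalue case; since your third paragraph argues directly on $\mathbb{R}^{m\times n}$, the cleanest repair is simply to delete the embedding paragraph. Second, the sentence ``the equivariance above together with von Neumann's inequality forces $G$ to share the SVD factors $(U,V)$ of $X$'' asserts, rather than proves, the technical heart of the theorem: the equality case of von Neumann tells you that simultaneity of the SVDs is equivalent to $\langle X,G\rangle=\bm{\sigma}(X)^{\top}\bm{\sigma}(G)$, but for an arbitrary $G\in\widehat{\partial}\Phi(X)$ you must first establish that this equality holds, and in \cite{NA_sgv_1} this takes a dedicated argument in which the subgradient inequality is tested along carefully chosen matrix perturbations. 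Until that step is supplied, the inclusion $\subseteq$ --- and with it the lemma --- is not proved.
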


\noindent A direct consequence of Lemma \ref{Lemma:limitingsub_svd} is the result of limiting subdifferential of $\|X\|_{p}^{p} $.
\begin{proposition}\label{Prop_Psubdifferential}
	Let $\Rank(X) = r \leq m \in \mathbb{N}$. The limiting subdifferential of $\|\cdot\|_{p}^{p}:\mathbb{R}^{m \times n}\to \mathbb{R}$ at a matrix $X$ is given by
	\begin{equation}\label{eq:subdifferentialoflp}
		\begin{aligned}
			\partial \|X\|_{p}^{p} = \partial\left(\sum_{i=1}^{r} \sigma_{i}(X)^{p}\right) &= \partial\left([\Vert\cdot\Vert_{p}^{p} \circ \bm{\sigma}](X) \right)\\
			&=  \left\{U\textrm{diag}(	\Sigma) V^{\top}\mid \Sigma \in (\partial \Vert\bm{\sigma} (X)\Vert_{p}^{p} \circ \partial\vert\sigma_{i}(X)\vert) \right\},
		\end{aligned}      
	\end{equation}
	where $\partial\Vert\bm{\sigma} (X)\Vert_{p}^{p}= \{\vartheta \in \mathbb{R}^{m}\mid \vartheta_j = p\sigma_{j}(X)^{p-1}, j\in[r]\}$ and $(U,V) \in \overline{\mathcal{M}}(X)$.
\end{proposition}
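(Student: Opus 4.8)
The plan is to realize $\|\cdot\|_p^p$ as the composition of an absolutely symmetric function with the singular value map and then invoke Lemma~\ref{Lemma:limitingsub_svd} directly. First I would set $\varphi:\mathbb{R}^m \to \mathbb{R}$, $\varphi(x) = \sum_{i=1}^m |x_i|^p$, and observe that $\varphi$ is absolutely symmetric: it is invariant under permutations of the coordinates and depends on each $x_i$ only through $|x_i|$. Since the singular values are nonnegative, $\varphi(\bm{\sigma}(X)) = \sum_{i=1}^r \sigma_i(X)^p = \|X\|_p^p$, so that $\|\cdot\|_p^p = \varphi \circ \bm{\sigma}$. Applying Lemma~\ref{Lemma:limitingsub_svd} then yields $\partial\|X\|_p^p = U\mathrm{diag}(\partial\varphi[\bm{\sigma}(X)])V^\top$ with $(U,V) \in \overline{\mathcal{M}}(X)$, which reduces the matrix problem to computing the subdifferential of the separable vector function $\varphi$.

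The next step is to compute $\partial\varphi$ at $\bm{\sigma}(X)$. Since $\varphi$ is separable, $\varphi(x) = \sum_i g(x_i)$ with $g(t) = |t|^p$, its limiting subdifferential factorizes as the Cartesian product $\partial\varphi(x) = \partial g(x_1) \times \cdots \times \partial g(x_m)$; here I would cite the standard separable-sum rule for the limiting subdifferential of proper lower semicontinuous functions to justify the factorization. For $i \in \mathcal{I}(X)$, i.e. $\sigma_i(X) > 0$, which after the decreasing ordering means $i \in [r]$, the scalar map $g$ is continuously differentiable at $\sigma_i(X)$ with $g'(\sigma_i(X)) = p\,\sigma_i(X)^{p-1}$, giving the single-valued component recorded in the statement. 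For $i \in \mathcal{Z}(X)$, i.e. $\sigma_i(X) = 0$, I would compute $\partial g(0)$ directly from the definition of the Fr\'echet subdifferential.

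The only place requiring genuine care, and which I regard as the crux of the argument, is this subdifferential of $g(t) = |t|^p$ at $t = 0$ for $p \in (0,1)$. Here $g$ is non-Lipschitz with vertical slopes, and I expect $\widehat{\partial} g(0) = \mathbb{R}$: the difference quotient $(|t|^p - vt)/|t| = |t|^{p-1} - v\,\mathrm{sign}(t)$ tends to $+\infty$ as $t\to 0$ for every $v\in\mathbb{R}$, so the Fr\'echet, and hence the limiting, subdifferential is all of $\mathbb{R}$. This is precisely why the proposition constrains only the components $\vartheta_j$ with $j \in [r]$ and leaves those with $j \notin [r]$ free. Assembling the pieces, $\partial\varphi(\bm{\sigma}(X))$ consists of the vectors $\vartheta$ with $\vartheta_j = p\,\sigma_j(X)^{p-1}$ for $j \in [r]$ and $\vartheta_j \in \mathbb{R}$ otherwise, which is exactly the set $\partial\|\bm{\sigma}(X)\|_p^p$ appearing in the statement; substituting back through Lemma~\ref{Lemma:limitingsub_svd} and letting $(U,V)$ range over $\overline{\mathcal{M}}(X)$ produces the claimed formula. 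The chain-rule presentation $\partial\|\bm{\sigma}(X)\|_p^p \circ \partial|\sigma_i(X)|$ in the statement is then just a compact way of recording these coordinatewise subdifferentials, with the factor $\partial|\sigma_i(X)| = \{1\}$ on the positive coordinates and the unbounded behavior absorbing the zero coordinates.
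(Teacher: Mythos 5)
Your proposal is correct and takes essentially the same approach as the paper, which presents this proposition as a direct consequence of Lemma~\ref{Lemma:limitingsub_svd} applied to the absolutely symmetric function $\varphi(x)=\sum_{i=1}^{m}|x_i|^{p}$ — precisely your reduction. Your explicit computation that $\partial\,|t|^{p}$ at $t=0$ equals all of $\mathbb{R}$ (so the components $j\notin[r]$ are unconstrained) correctly fills in the one detail the paper leaves implicit.
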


We use the following subdifferential-based stationary principle to establish the first-order necessary optimality conditions of \eqref{pro_pri}.
\begin{theorem}[Nonsmooth versions of Fermat's rule]\label{theo_Fermat}
	If problem \eqref{pro_pri} has a local minimum at $\bar{X}$, then $\bm{0} \in \widehat{\partial}F(\bar{X}) \subset \partial F(\bar{X})$. 
\end{theorem}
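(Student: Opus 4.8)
The statement decomposes into two independent claims matching the two relations $\bm{0} \in \widehat{\partial}F(\bar{X})$ and $\widehat{\partial}F(\bar{X}) \subset \partial F(\bar{X})$, and my plan is to treat them separately. As a preliminary step I would verify that $F$ is proper and lower semicontinuous so that both subdifferentials are meaningful at $\bar{X}$; this is immediate, since $f$ is twice continuously differentiable hence continuous, and $X \mapsto \lambda\|X\|_{p}^{p} = \lambda\sum_{i}\sigma_{i}(X)^{p}$ is continuous because the singular values depend continuously on $X$. Thus $F$ is in fact finite-valued and continuous everywhere, and $\mathrm{dom}\,F = \mathbb{R}^{m\times n}$.

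For the first relation I would argue directly from the analytic definition of the Fr\'{e}chet subdifferential. Since $\bar{X}$ is a local minimizer, there is a neighborhood of $\bar{X}$ on which $F(X) - F(\bar{X}) \geq 0$. Substituting the candidate subgradient $v = \bm{0}$ into the defining difference quotient yields
\begin{equation*}
	\liminf_{X \to \bar{X},\, X \neq \bar{X}} \frac{F(X) - F(\bar{X}) - \langle \bm{0}, X - \bar{X}\rangle}{\|X - \bar{X}\|_{2}} = \liminf_{X \to \bar{X},\, X \neq \bar{X}} \frac{F(X) - F(\bar{X})}{\|X - \bar{X}\|_{2}} \geq 0,
\end{equation*}
because the numerator is eventually nonnegative while the denominator is strictly positive. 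By the definition of $\widehat{\partial}F$, this is precisely $\bm{0} \in \widehat{\partial}F(\bar{X})$.

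For the inclusion $\widehat{\partial}F(\bar{X}) \subset \partial F(\bar{X})$, which in fact holds at every point of $\mathrm{dom}\,F$ and not only at minimizers, I would use the closure construction defining the limiting subdifferential. Fix an arbitrary $v \in \widehat{\partial}F(\bar{X})$ and take the constant sequences $X^{k} \equiv \bar{X}$ and $v^{k} \equiv v$. Then $v^{k} \to v$ and $v^{k} \in \widehat{\partial}F(X^{k})$ for every $k$, while the $F$-attentive convergence $X^{k} \overset{F}{\to} \bar{X}$ holds trivially since $F(X^{k}) = F(\bar{X})$ for all $k$. Hence $v$ satisfies every requirement in the definition of $\partial F(\bar{X})$, giving $v \in \partial F(\bar{X})$. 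Combining the two parts produces the chain $\bm{0} \in \widehat{\partial}F(\bar{X}) \subset \partial F(\bar{X})$.

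I do not anticipate a substantive obstacle, as this is merely the matrix-variable instance of the classical nonsmooth Fermat rule. The only two points that require care are confirming lower semicontinuity of $F$ so that the subdifferentials are well defined, and correctly invoking \emph{$F$-attentive} convergence rather than plain convergence in the second step; the constant-sequence choice disposes of the latter subtlety automatically, which is why I prefer it over a general approximating sequence.
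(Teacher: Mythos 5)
Your proof is correct: the liminf argument establishing $\bm{0} \in \widehat{\partial}F(\bar{X})$ from local minimality, and the constant-sequence argument giving $\widehat{\partial}F(\bar{X}) \subset \partial F(\bar{X})$ via the closure definition of the limiting subdifferential, are exactly the standard proof of the nonsmooth Fermat rule, and your preliminary observation that $F$ is finite-valued and continuous (hence proper and lower semicontinuous) is also valid. Note that the paper itself states this theorem without any proof, treating it as a classical fact from variational analysis, so your write-up simply supplies the standard argument the paper implicitly relies on.
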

Using \Cref{Prop_Psubdifferential} and by \Cref{theo_Fermat}, we define the critical point of \eqref{pro_pri} as follows.
\begin{definition}[Critical point]\label{Def_Stationary} 
	We say that an $X \in \mathbb{R}^{m \times n}$ is a critical point of \eqref{pro_pri} if it satisfies $\bm{0} \in\partial{F}(X)$. Moreover, the set of all critical points of \eqref{pro_pri} is denoted by
	\begin{equation}\label{opt_pri_point}
		\mathrm{crit} (F) := \left\{X\in\mathbb{R}^{m\times n}\mid\mathbf{0}\in \nabla f (X) + \lambda U\mathrm{diag} (\partial|\bm{\sigma} (X)|_{p}^{p})V^{\top},\ (U,V)\in\overline{\mathcal{M}}(X) \right\}. 
	\end{equation}
\end{definition}

The Kurdyka-{\L}ojasiewicz (KL) property plays an important role in our convergence analysis. We next recall the essential components as follows. First, let $\Omega \subset \mathbb{R}^{m\times n}$ and $X \in \mathbb{R}^{m\times n}$, the distance from $X$ to $\Omega$ is defined by
\begin{equation*}
	\mathrm{dist}(X,\Omega):= \inf\{\Vert X - Y\Vert_{F}\mid Y\in\Omega\}.
\end{equation*}
In particular, we have $\mathrm{dist}(X,\Omega) = +\infty$ for any $X$ when $\Omega = \emptyset$.
Next, we provide the definition of the desingularizing function.
\begin{definition}[Desingularizing function]\cite{KL_definition_Hilbert}\label{Def_desingularizing}
	Let $\eta > 0$. We say that $\Phi:[0,\eta] \to \mathbb{R}_{+}$ is a desingularizing function if
	\begin{itemize}
		\item[(i)] $\Phi(0)=0$;
		\item[(ii)] $\Phi$ is continuous on $[0,\eta]$ and of class $C^{1}$ on $(0,\eta)$;
		\item[(iii)] $\Phi'(s) > 0$ for all $s \in (0,\eta)$.
	\end{itemize}
\end{definition}
Typical examples of desingularizing functions are the functions of the form $\Phi(t) = cs^{1-\theta}$, for $c>0$ and $\theta\in [0,1)$. 

Now we define the Kurdyka-{\L}ojasiewicz property.
\begin{definition}[KL property]\label{def_KL_fun}
	Let $F:\mathbb{R}^{m\times n} \to \mathbb{R}\cup\{+\infty\}$ be proper lower-semicontinuous. We say that $F$ satisfies the Kurdyka-{\L}ojasiewicz property at $\bar{X}\in\mathrm{dom}(\partial F) := \{ X \in \mathbb{R}^{m\times n} \mid \partial F(X) \neq \emptyset \} $ if there exists $\eta>0$, a neighborhood $\mathbb{U} (\bar{X},\rho) $ of $\bar{X}$, and a concave desingularizing function $\Phi:[0,\eta) \to \mathbb{R}_{+}$, such that the Kurdyka-{\L}ojasiewicz inequality
	\begin{equation}\label{ineq_KL}
		\Phi'\left(F (X)-F (\bar{X})\right) \mathrm{dist} \left(0,\partial F(\bar{X})\right)\ge 1
	\end{equation}
	holds, for all $X$ in the strict local upper level set
	\begin{equation*}
		\mathrm{Lev}_{\eta}(\bar{X},\rho) := \{X \in \mathbb{U} (\bar{X},\rho) \mid F(\bar{X}) < F(X) < F(\bar{X}) + \eta\}.
	\end{equation*}
\end{definition}
If $F$ satisfies the KL property at any $X \in \mathrm{dom}(\partial F)$, we then call $F$ a KL function.

Moreover, due to the existence of multiple critical points in nonconvex optimization, the KL property at a single point $\bar{X}$ may be insufficient at times. It is necessary for us to introduce the definition of uniform KL property.

\begin{definition}[Uniform KL property]\cite{bolte2014proximal}\label{Uniform_KL}
	Let $\Omega$ be a compact set and let $F:\mathbb{R}^{m\times n} \to \mathbb{R}\cup\{+\infty\}$ be proper lower-semicontinuous function. Assume that $F$ is constant on $\Omega$ and satisfies the KL property at each point of $\Omega$. We say that $F$ has uniform KL property on $\Omega$ if there exist $\epsilon > 0$, $\eta > 0$ and $\Phi$ defined in \Cref{def_KL_fun} such that the KL inequality \eqref{ineq_KL} holds for any $\bar{X} \in \Omega$ and any $X \in \{X \in \mathbb{R}^{m\times n}\mid \mathrm{dist}(X,\Omega) < \epsilon\} \cap \{X \in \mathbb{R}^{m\times n}\mid F(\bar{X}) < F(X) < F(\bar{X}) + \eta\}$.
\end{definition}
\section{Proposed Extrapolated Iteratively Reweighed Nuclear Norm Algorithm with Rank Identification}
In this section, we provide the details of the proposed EIRNN framework for solving \eqref{pro_pri} by discussing the subproblem solution and a novel updating strategy of the perturbation parameter to enable the rank identification property.

Before presenting the proposed algorithm, we make the following assumptions on \eqref{pro_pri} as follows throughout.
\begin{assumption}\label{ASM_fLip} 
	The function $f:\mathbb{R}^{m\times n}\to\mathbb{R} $ is $L_f$-smooth, i.e.,
	\begin{equation*}
		\|\nabla f (X) - \nabla f (Y)\|_{F} \le L_{f}\|X-Y\|_{F}, \ \forall X,Y\in\mathbb{R}^{m\times n},
	\end{equation*}
	where the modulus $L_f \geq 0$ refers to the smoothness parameter.
\end{assumption}
\begin{assumption}\label{ASM_levelBoundness}
	Throughout we assume $F$ is level-bounded \cite[Definition 1.8]{rockafellar2009variational}. This assumption on $F$ corresponds to $\lim_{X\in\mathbb{R}^{m\times n}:\Vert X \Vert \to \infty} F(X) = +\infty$ and further implies $\min_{X\in\mathbb{R}^{m\times n}} F(X) = \underline{F} > -\infty$ and $\{X  \mid \argmin_{X\in \mathbb{R}^{m\times n} } F(X)\} \neq \emptyset$ regarding \eqref{pro_pri}.
\end{assumption}

We state the proposed algorithm in \Cref{alg_acc}, which consists of solving a sequence of weighted nuclear norm regularized subproblems, an extrapolation technique, and an adaptive perturbation parameter updating strategy. 

\begin{algorithm}[htbp]
	\caption{Extrapolated Iteratively Reweighted Nuclear Norm with Rank Identification (EIRNRI)}
	\label{alg_acc} 
	\begin{algorithmic}[1]
		\REQUIRE{ $X^{0}\in\mathbb{R}^{m\times{n}}, \bm{\epsilon}^{0}\in\mathbb{R}^{m}_{++} \cap \mathbb{R}^m_{\downarrow}$, $\mu\in(0,1)$, and $\alpha_{0} \in [0,\bar{\alpha}]$ by \eqref{alpha_update}}.
		\ENSURE{$k = 0$ and $X^{-1}=X^{0}$. } 
		\REPEAT{
			\STATE{Compute weights $w_{i}^{k} = p \left(\sigma_i(X^{k})+\bm{\epsilon}^{k}_{i}\right)^{p-1},\ \forall i \in [m].$
			}
			\STATE{Compute $Y^{k+1}$ according to the extrapolation \eqref{eq: extrapolatedPoint}.}
			\STATE{Compute the new iterate as the solution of  \eqref{eq:NewIter}.}
			\STATE{Update $\bm{\epsilon}^{k}$ by the subroutine of  \Cref{Algo:update_eps}.} 
			\STATE{Choose $\alpha_{k} \in [0,\bar{\alpha}]$.} 
			\STATE{Set $k\gets k+1 $. }
		}
		\UNTIL{ convergence }
	\end{algorithmic}
\end{algorithm}

\subsection{A Weighted Nuclear Norm Surrogate with Extrapolation}

Our presented approach for solving \eqref{pro_pri} is primarily motivated by the substantial literature on proximal gradient-type methods employing acceleration techniques \cite{intro_TK2019} and iteratively reweighted techniques \cite{NonNon_AdaIRWM_zhang_wang_2021}.  Specifically, we first add perturbation  parameters $\bm{\epsilon} \in \mathbb{R}^{n}_{++}$ to each singular value of the matrix to smooth the $p$-th power of the Schatten-$p$ norm, 
\begin{equation}\label{pro_reg_eps_our}
	F(X;\bm{\epsilon}) := f(X) + \lambda \sum_{i=1}^{m} (\sigma_{i} (X) + \epsilon_i)^{p}. 
\end{equation}
Obviously, $F(X;0) = F(X)$.  Drawing upon the Nesterov's acceleration technique \cite{nesterov1983method,Alg_conflict_AIRNN_2021}, our approach  begins by computing an extrapolated $Y^{k}$, using the current iterate $X^{k}$ and the previous one $X^{k-1}$, i.e.,
\begin{equation}\label{eq: extrapolatedPoint}
	Y^{k} = X^{k} + \alpha_{k}(X^{k} - X^{k-1}),
\end{equation}
where $\alpha_{k} \in [0,\bar{\alpha})$ refers to the extrapolation   parameter  and is selected according to the following rule   \cite{Zeng_Acc_2022} 
\begin{equation} \label{alpha_update} 
	\begin{cases}
		\bar{\alpha}\in (0,1), & \text{ if } f (x) \text{ is convex and $L_f$-smooth, } \\
		\bar{\alpha}\in (0,\sqrt{\frac{\beta}{\beta+3L_{f}}}), & \text{ if } f (x) \text{ is $L_f$-smooth.}
	\end{cases}
\end{equation}

At $Y^{k}$, it follows for any feasible $X$ that the perturbed objective $F(X;\epsilon)$ admits a useful upper bound presented below, i.e.,  
\begin{equation}
	\begin{aligned}
		F(X,\bm{\epsilon}) & := f(X) + \lambda \sum_{i=1}^{m}(\sigma_{i}(X) + \epsilon_{i})^{p}\\
		&  \overset{(a)}{\leq} f(Y^{k}) + \langle \nabla f (Y^{k}), X-Y^{k} \rangle + \frac{L_f}{2}\Vert X-Y^{k}\Vert_{F}^{2}\\
		&\quad  + \lambda\sum\limits_{i=1}^{m}p(\sigma_{i}(X^{k}) + \epsilon_{i}^{k})^{p-1}(\sigma_{i}(X) - \sigma_{i}(X^{k}))\\
		&\overset{(b)}{\leq} f(Y^{k}) + \langle \nabla f (Y^{k}), X - Y^k \rangle + \frac{L_f}{2}\Vert X-Y^{k}\Vert_{F}^{2} + \frac{L_f}{2}\Vert X-X^{k}\Vert_{F}^{2}\\
		& \quad + \lambda \sum\limits_{i=1}^{m}p(\sigma_{i}(X^{k}) + \epsilon_{i}^{k})^{p-1}(\sigma_{i}(X) - \sigma_{i}(X^{k})),\\
	\end{aligned}
\end{equation}
where $(a)$ is a direct consequence of $L_f$-smoothness of $f$ under \Cref{ASM_fLip} and the concavity of $(\cdot)^{p}$, and $(b)$ naturally holds due to the nonnegativity of the proximal term $\frac{L_f}{2}\Vert X-X^{k}\Vert_{F}^{2}$. Omitting the constants on the right-hand side, we obtain the following surrogate function $L(X;X^{k},Y^{k},\bm{\epsilon}^{k})$ to approximate $F(X)$ at $X^k$, i.e.,
\begin{equation}\label{pro_Linear}
	L(X;X^{k},Y^{k},\bm{\epsilon}^{k}):= f (Y^{k}) + \langle X, \nabla f(Y^{k}) \rangle+ \frac{\beta}{2}\| X-Y^{k}\|_{F}^{2} + \frac{\beta}{2} \| X-X^{k}\|_{F}^{2} + \lambda\sum\limits_{i=1}^{m}w_{i}^{k}\sigma_{i} (X),
\end{equation}
where $w_{i}^{k} = w(\sigma_{i}(X^{k}),\epsilon_{i}^{k}) = p \left(\sigma_i(X^{k})+\epsilon_{i}^{k} \right)^{p-1}, \forall i \in [m]$ and $\beta > L_f > 0$. The new iterate $X^{k+1}$ is computed as  $L(X^{k};X^{k},Y^{k},\bm{\epsilon}^{k})$, i.e., 

\begin{equation}\label{eq:NewIter}
	\begin{aligned}
		X^{k+1} &\in \argmin_{X \in \mathbb{R}^{m\times n}} L(X;X^{k},Y^{k},\bm{\epsilon}^{k})\\
		&\overset{ }{=} \argmin_{X \in \mathbb{R}^{m\times n}} \left\lbrace \frac{\beta}{2}\left\Vert X - \left(\frac{X^{k}+Y^{k}}{2}- \frac{\nabla f(Y^{k})}{2\beta}\right)\right\Vert_{F}^{2} + \frac{\lambda}{2}\sum\limits_{i=1}^{m}w_{i}^{k}\sigma_{i} (X)\right\rbrace.
	\end{aligned}
\end{equation}

\subsection{Subproblem Solution}
It should be mentioned that solving such a weighted nuclear norm minimization problem \eqref{eq:NewIter} is not a direct extension of solving a weighted $\ell_{1}$ norm minimization counterpart in the vector case. In fact, problem \eqref{eq:NewIter} is nonconvex and hence generally posts challenges to find the global minimizer. As shown in \cite[Theorem 2.2]{chen2013reduced}, the weighted nuclear norm of $X\in \mathbb{R}^{m\times n}$, $\|X\|_{*\bm{w}}$, is convex with respect to $X$ if and only if both the singular value $\sigma_{i}(X)$ and its corresponding weights $w_i$, $\forall i \in [m]$ are in descending order.  However, it is unrealistic to design such a strategy, since $w(\sigma_i(X), \epsilon_i) = p(\sigma_i(X)+ \epsilon_i)^{p-1} <  p(\sigma_j(X)+ \epsilon_j)^{p-1} = w(\sigma_j(X), \epsilon_j)$ for $\sigma_i(X) > \sigma_j(X)$ as $\epsilon_i\to 0$ and $\epsilon_j \to 0$.
Nonetheless, a closed-form global optimal solution to \eqref{eq:NewIter} is available by imposing the ascending order on all weights $w_i, \forall i \in [m]$ \cite{IRSVM_LuZhaoSong_2014,opt_simu_svd_2017}. We restate such a result for  \eqref{eq:NewIter}  in the following theorem.

\begin{theorem}\label{Theo:SubproblemSolve}
	Consider $\eqref{eq:NewIter}$. Let $\bm{w}^{k} \in \mathbb{R}_{\uparrow}^{m} \cap \mathbb{R}^{m}_{++}$, that is,
	\begin{equation}\label{eq:AscendingWeights}
		0<w_1^{k}\leq w_2^k \leq \ldots \leq w_m^{k}.
	\end{equation}
	Then a global optimal solution to  \eqref{eq:NewIter} reads
	\begin{equation}\label{eq:OptSub}
		X^{k+1} = U^{k+1} \text{diag}\left(\left[\Sigma^{k+1}_{i}-\frac{\lambda w_{i}^{k}}{2\beta}\right]_{+}\right) {V^{k+1}}^{\top}
	\end{equation}
	with $U^{k+1}\textrm{diag}(\Sigma^{k+1}){V^{k+1}}^{\top}$ being the SVD of the matrix $\frac{X^{k}+Y^{k}}{2} -\frac{\nabla f (Y^{k})}{2\beta}$.
\end{theorem}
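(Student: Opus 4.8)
The plan is to reduce the matrix-valued minimization in \eqref{eq:NewIter} to a scalar optimization over the singular values, exploiting the unitary invariance of both the Frobenius proximity term and the weighted term. Writing $A := \frac{X^{k}+Y^{k}}{2}-\frac{\nabla f(Y^{k})}{2\beta}$ with SVD $A = U^{k+1}\mathrm{diag}(\Sigma^{k+1})(V^{k+1})^{\top}$, I would first expand
\begin{equation}
\frac{\beta}{2}\|X-A\|_{F}^{2} = \frac{\beta}{2}\sum_{i=1}^{m}\sigma_{i}(X)^{2} - \beta\langle X, A\rangle + \frac{\beta}{2}\sum_{i=1}^{m}(\Sigma^{k+1}_{i})^{2},
\end{equation}
so that, apart from the cross term $-\beta\langle X,A\rangle$, the objective depends on $X$ only through its singular values.

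The key reduction then comes from von Neumann's trace inequality, which yields $\langle X,A\rangle \le \sum_{i=1}^{m}\sigma_{i}(X)\,\Sigma^{k+1}_{i}$ with equality precisely when $X$ and $A$ admit a simultaneous ordered SVD in the sense of \Cref{Def:simultaneousSVD}. Since the cross term enters with a negative coefficient and every other term is a function of $\bm{\sigma}(X)$ alone, for any prescribed singular-value vector the objective is minimized by aligning the singular vectors of $X$ with $(U^{k+1},V^{k+1})$. Hence it suffices to restrict attention to $X = U^{k+1}\mathrm{diag}(\bm{s})(V^{k+1})^{\top}$ with $\bm{s}\in\mathbb{R}^{m}_{\downarrow}\cap\mathbb{R}^{m}_{+}$, which reduces \eqref{eq:NewIter} to the separable scalar problem
\begin{equation}
\min_{\bm{s}\in\mathbb{R}^{m}_{\downarrow}\cap\mathbb{R}^{m}_{+}}\ \sum_{i=1}^{m}\left[\frac{\beta}{2}\left(s_{i}-\Sigma^{k+1}_{i}\right)^{2} + \frac{\lambda}{2}w_{i}^{k}s_{i}\right].
\end{equation}

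Minimizing each summand over the relaxed constraint $s_{i}\ge 0$ gives the soft-thresholding expression $s_{i}^{\star} = \bigl[\Sigma^{k+1}_{i}-\tfrac{\lambda w_{i}^{k}}{2\beta}\bigr]_{+}$, which is exactly \eqref{eq:OptSub}. The crucial final step is to verify that this separable minimizer automatically satisfies the discarded monotonicity constraint $s_{1}^{\star}\ge\cdots\ge s_{m}^{\star}\ge 0$, so that the relaxation is tight and global optimality is retained. This is where the ascending-order hypothesis \eqref{eq:AscendingWeights} is indispensable: since $\Sigma^{k+1}_{i}$ is nonincreasing in $i$ while $\tfrac{\lambda w_{i}^{k}}{2\beta}$ is nondecreasing, the argument $\Sigma^{k+1}_{i}-\tfrac{\lambda w_{i}^{k}}{2\beta}$ is nonincreasing in $i$, and monotonicity of $[\cdot]_{+}$ preserves the ordering. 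I expect the main obstacle to be the rigorous justification of the von Neumann alignment step, in particular confirming that the equality case of the trace inequality is compatible with the descending order forced on $\bm{\sigma}(X)$, and handling possible ties among the singular values of $A$ (and among the thresholded values), which must be treated carefully so that the claimed closed form is certified as a genuine global minimizer rather than merely a stationary point.
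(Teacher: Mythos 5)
Your proof is correct and is essentially the same argument the paper relies on: the paper's own ``proof'' is a one-line deferral to Theorem 3.1 of the cited Lu--Zhaosong reference, whose argument is exactly your reduction — von Neumann's trace inequality to pass to a separable problem over ordered singular values, per-component soft-thresholding, and the observation that ascending weights $w_i^k$ against descending $\Sigma_i^{k+1}$ keep the thresholded values $\bigl[\Sigma^{k+1}_{i}-\tfrac{\lambda w_{i}^{k}}{2\beta}\bigr]_{+}$ in descending order, so the relaxation is tight. Your closing worry about ties and the equality case is unnecessary: the argument only needs the inequality $\langle X,A\rangle\le\sum_{i}\sigma_{i}(X)\,\Sigma^{k+1}_{i}$ together with the direct computation that equality holds for any $X=U^{k+1}\mathrm{diag}(\bm{s})(V^{k+1})^{\top}$ with $\bm{s}$ descending and nonnegative, so the precise characterization of when von Neumann's inequality is tight never enters.
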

\begin{proof}
	The proof follows the same argument of \cite[Theorem 3.1]{IRSVM_LuZhaoSong_2014}.
\end{proof}

With the help of \Cref{Theo:SubproblemSolve} and \Cref{Def:simultaneousSVD} of simultaneous ordered SVD, we establish the following result whose proof follows the similar argument of \cite[Proposition 2]{ge2022fast}.

\begin{proposition}\label{prop_opt}
	Consider \eqref{eq:NewIter}. Suppose $\bm{w}^{k} \in \mathbb{R}_{\uparrow}^{m} \cap \mathbb{R}^{m}_{++}$. Then there exist $\bm{\xi}^{k+1} \in \partial \vert \bm{\sigma}(X^{k+1}) \vert \subset \mathbb{R}^{m}$  such that
	\begin{equation}\label{eq:optimalitySub}
		U^{k+1}\text{diag}\left( \frac{\lambda}{2\beta}\bm{w}^{k} \circ \bm{\xi}^{k+1} \right) {V^{k+1}}^{\top} \in \partial\left\lbrace \sum\limits_{i=1}^{m}w_{i}^{k}\sigma_{i} (X^{k+1}) \right\rbrace, 
	\end{equation}
	where $X^{k+1}$ and $\frac{X^{k}+Y^{k}}{2}- \frac{\nabla f(Y^{k})}{2\beta}$ have a simultaneous ordered SVD. 
\end{proposition}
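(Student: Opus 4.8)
The plan is to read off the entire argument in a single, fixed singular-vector frame, using three ingredients already available: the explicit soft-thresholded solution from \Cref{Theo:SubproblemSolve}, the limiting-subdifferential formula for singular-value functions in \Cref{Lemma:limitingsub_svd}, and Fermat's rule (\Cref{theo_Fermat}) together with the exact sum rule for a smooth-plus-nonsmooth objective. Write $G:=\tfrac{X^{k}+Y^{k}}{2}-\tfrac{\nabla f(Y^{k})}{2\beta}$ and let $U^{k+1}\mathrm{diag}(\Sigma^{k+1}){V^{k+1}}^{\top}$ be its SVD. Then \Cref{Theo:SubproblemSolve} gives $X^{k+1}=U^{k+1}\mathrm{diag}\big([\Sigma^{k+1}-\tfrac{\lambda\bm w^{k}}{2\beta}]_{+}\big){V^{k+1}}^{\top}$, so $X^{k+1}$ and $G$ share the pair $(U^{k+1},V^{k+1})$, i.e.\ they admit a simultaneous ordered SVD in the sense of \Cref{Def:simultaneousSVD}. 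This common frame is the geometric backbone of the proof, since it lets me represent $G$, $X^{k+1}$, and their residual simultaneously as $U^{k+1}\mathrm{diag}(\cdot){V^{k+1}}^{\top}$ and thereby reduce the matrix statement to a coordinatewise scalar statement.

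First I would produce the required subgradient by optimality. The objective in \eqref{eq:NewIter} splits as a $C^{1}$ quadratic plus $\tfrac{\lambda}{2}h$, where $h(X):=\sum_{i=1}^{m}w_{i}^{k}\sigma_{i}(X)$; because the quadratic part is smooth, the limiting-subdifferential sum rule is exact, and Fermat's rule at the global minimizer $X^{k+1}$ gives $\bm 0\in\beta(X^{k+1}-G)+\tfrac{\lambda}{2}\partial h(X^{k+1})$. Hence the suitably scaled residual $\tfrac{2\beta}{\lambda}(G-X^{k+1})$ lies in $\partial h(X^{k+1})$. Evaluating $G-X^{k+1}=U^{k+1}\mathrm{diag}\big(\Sigma^{k+1}-[\Sigma^{k+1}-\tfrac{\lambda\bm w^{k}}{2\beta}]_{+}\big){V^{k+1}}^{\top}$ entrywise in the common frame, the $i$th diagonal of $\tfrac{2\beta}{\lambda}(G-X^{k+1})$ equals $w_{i}^{k}$ when $\sigma_{i}(X^{k+1})>0$ and equals $\tfrac{2\beta}{\lambda}\Sigma_{i}^{k+1}\in[0,w_{i}^{k}]$ when $\sigma_{i}(X^{k+1})=0$. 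Factoring each diagonal as $w_{i}^{k}\xi_{i}^{k+1}$ then forces $\xi_{i}^{k+1}=1$ on the support of $\bm\sigma(X^{k+1})$ and $\xi_{i}^{k+1}\in[0,1]$ off it; since $\partial|0|=[-1,1]$, the vector $\bm\xi^{k+1}$ indeed belongs to $\partial|\bm\sigma(X^{k+1})|$. This exhibits an element of $\partial h(X^{k+1})$ of the form $U^{k+1}\mathrm{diag}(\bm w^{k}\circ\bm\xi^{k+1}){V^{k+1}}^{\top}$, which is exactly \eqref{eq:optimalitySub} up to the normalizing scalar and the already-established common SVD frame.

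The role of \Cref{Lemma:limitingsub_svd} is to confirm that this constructed element is genuinely of the canonical shape of $\partial h(X^{k+1})$: I would write $h=\varphi\circ\bm\sigma$ with $\varphi(\bm y)=\sum_{i}w_{i}^{k}|\bm y|_{[i]}$, where $|\bm y|_{[i]}$ is the $i$th largest entry of $|\bm y|$, observe that applying fixed weights to the sorted absolute values makes $\varphi$ absolutely symmetric (while the ascending ordering of $\bm w^{k}$ is precisely what makes $\varphi$, and hence $h$, nonconvex), and invoke the lemma to obtain $\partial h(X^{k+1})=\{U^{k+1}\mathrm{diag}(\bm\zeta){V^{k+1}}^{\top}:\bm\zeta\in\partial\varphi(\bm\sigma(X^{k+1}))\}$ with the admissible multipliers taking the product form $\bm w^{k}\circ\bm\xi$, $\bm\xi\in\partial|\bm\sigma(X^{k+1})|$. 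Here I must check that the frame $(U^{k+1},V^{k+1})$ inherited from the SVD of $G$ lies in $\overline{\mathcal M}(X^{k+1})$, which holds by the simultaneous ordered SVD, so the subdifferential formula applies in that very frame.

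The main obstacle I anticipate is not the algebra but the subdifferential calculus at the nonsmooth points of $\bm\sigma(X^{k+1})$. Verifying that the constructed factorization produces a bona fide element of $\partial|\bm\sigma(X^{k+1})|$ requires care exactly at the zero (and possibly repeated) singular values, where the absolute-value subdifferential is set-valued and the SVD frame is nonunique; reconciling the specific frame furnished by \Cref{Theo:SubproblemSolve} with the frame-independent statement of \Cref{Lemma:limitingsub_svd}, and confirming that the ascending-order hypothesis on $\bm w^{k}$ is what keeps each residual entry nonnegative and bounded by $\tfrac{\lambda w_{i}^{k}}{2\beta}$ on the zero block, is the delicate part. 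Once these structural facts are pinned down, the asserted membership follows by inspection, paralleling \cite[Proposition~2]{ge2022fast}.
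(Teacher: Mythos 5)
Your proposal is correct and follows essentially the same route as the paper's own proof: apply Fermat's rule to the smooth-plus-nonsmooth split of \eqref{eq:NewIter} to place the scaled residual $\tfrac{2\beta}{\lambda}\bigl(G-X^{k+1}\bigr)$ in $\partial\bigl\{\sum_{i=1}^{m}w_{i}^{k}\sigma_{i}(X^{k+1})\bigr\}$, express it in the shared frame $(U^{k+1},V^{k+1})$ furnished by \Cref{Theo:SubproblemSolve}, and run the same two-case analysis ($\xi_{i}^{k+1}=1$ on the support, $\xi_{i}^{k+1}=\tfrac{2\beta\Sigma_{i}^{k+1}}{\lambda w_{i}^{k}}\in[0,1]$ on the zero block) to exhibit the factorization $\bm{w}^{k}\circ\bm{\xi}^{k+1}$ with $\bm{\xi}^{k+1}\in\partial|\bm{\sigma}(X^{k+1})|$. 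Your extra invocation of \Cref{Lemma:limitingsub_svd} to confirm the canonical shape of the subdifferential is a minor (and reasonable) addition in rigor beyond what the paper records, but it does not change the argument.
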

\begin{proof}
	By \Cref{theo_Fermat} and \Cref{Theo:SubproblemSolve}, we have 
	\begin{equation}\label{eq:optsubconditon}
		\bm{0} \in \beta\left( X^{k+1} -\left(\frac{X^{k}+Y^{k}}{2}-\frac{\nabla f(Y^k)}{2\beta} \right)  \right) + \frac{\lambda}{2}\partial\left(\sum_{i=1}^{m} w_{i}^{k} \sigma_{i}(X^{k+1}) \right).
	\end{equation}
	Note also that matrices $X^{k+1} $ and $\frac{X^{k}+Y^k}{2} -\frac{1}{2\beta}\nabla f (X^{k}) $ have the simultaneous ordered SVD. From \eqref{eq:optsubconditon}, we know there exists $\hat{\xi}^{k+1} \in \partial\left(\sum_{i=1}^{m} w_{i}^{k} \sigma_{i}(X^{k+1}) \right)$ such that 
	\begin{equation}
		\begin{aligned}
			\hat{\xi}^{k+1} &= \frac{2\beta}{\lambda}\left(\left(\frac{X^{k}+Y^{k}}{2}-\frac{\nabla f(Y^k)}{2\beta} \right) -  X^{k+1}\right)\\
			&= \frac{2\beta}{\lambda}\left(U^{k+1}\textrm{diag}(\Sigma^{k+1}){V^{k+1}}^{\top} - U^{k+1} \text{diag}\left(\left(\Sigma^{k+1}-\frac{\lambda \bm{w}^{k}}{2\beta}\right)_{+}\right) {V^{k+1}}^{\top}\right)\\
			&= \frac{2\beta}{\lambda}\left(U^{k+1}\textrm{diag}\left( \Sigma^{k+1}-\left( \Sigma^{k+1}-\frac{\lambda \bm{w}^{k}}{2\beta}\right)_{+}\right)  {V^{k+1}}^{\top}\right).
		\end{aligned} 
	\end{equation}
	If $\sigma_{i}(X^{k+1}) = \left( \Sigma^{k+1}_i-\frac{\lambda w_i^{k}}{2\beta}\right)_{+} >0$ for $i\in [m]$, we know $\partial (\sigma_{i}(X^{k+1})) =1$ and $\Sigma^{k+1}-\left( \Sigma^{k+1}-\frac{\lambda \bm{w}^{k}}{2\beta}\right)_{+} = \frac{\lambda \bm{w}^{k}}{2\beta} $. Then, it follows from for any $i \in [m]$ such that $\xi_{i}^{k+1} = 1$ that $\frac{\lambda\bm{w}^{k} }{2\beta}\circ \bm{\xi}^{k+1} = \frac{\lambda\bm{w}^{k}}{2\beta} \in \frac{\lambda\bm{w}^{k}}{2\beta}\circ \partial (\sigma_{i}(X^{k+1})) $. If $\sigma_{i}(X^{k+1}) = \left( \Sigma^{k+1}_i-\frac{\lambda w_i^{k}}{2\beta}\right)_{+} = 0$ for $i\in [m]$, we know $\partial (\sigma_{i}(X^{k+1})) = [-1,1]$ and $\Sigma^{k+1}_{i}-\left( \Sigma^{k+1}_{i}-\frac{\lambda \bm{w}^{k}_{i}}{2\beta}\right)_{+} =  \Sigma^{k+1}_{i} \in [0,\frac{\lambda \bm{w}_{i}^{k}}{2\beta}]$. Then, it follows from for any $i \in [m]$ such that $\xi_{i}^{k+1} = \frac{2\beta\Sigma_{i}^{k+1}}{\lambda w_{i}^k} \in [0,1]\subset[-1,1]$ that $\frac{\lambda\bm{w}^{k} }{2\beta}\circ \bm{\xi}^{k+1} = \Sigma_{i}^{k+1}\in\frac{\lambda\bm{w}^{k} }{2\beta} \partial(\sigma_{i}(X^{k+1}))$. Therefore, the proof is completed.
\end{proof}

Since $X^{k+1}$ is a global minimum to \eqref{eq:NewIter} by \Cref{Theo:SubproblemSolve}, it follows from \Cref{theo_Fermat} and \Cref{prop_opt} that  there exist $\bm{\xi}^{k+1}\in\partial|\bm{\sigma}(X^{k+1})| $ such that 
\begin{equation}\label{KKT_acc_opt}
	\bm{0} = \nabla f (Y^{k}) + \beta (X^{k+1}-Y^{k}) + \beta (X^{k+1} - X^{k}) + \lambda U^{k+1}\mathrm{diag} \left(\bm{w}^{k}\circ\bm{\xi}^{k+1}\right){V^{k+1}}^{\top}. 
\end{equation}

\subsection{An Adaptive Updating Strategy for Perturbation \texorpdfstring{\boldmath{$\epsilon$}}{e}} 
A key component of our proposed algorithmic framework is the updating strategy for the perturbation $\bm{\epsilon}$, which controls how the perturbation evolves during optimization and is critical for analyzing the behavior of our proposed algorithm. 
The updating strategy should be designed such that 
we can manipulate the values of $\epsilon_{i}, \forall i \in [m]$ to maintain the ascending order of $\{ w_1, \ldots, w_m\}$ during  iteration. This ensures that a global optimal solution to subproblem \eqref{eq:NewIter} can be obtained according to \Cref{Theo:SubproblemSolve}. Our proposed updating strategy is presented in \Cref{Algo:update_eps}. 

\begin{algorithm}[htbp] 
\caption{Update perturbation $\bm{\epsilon}$.}
\label{Algo:update_eps} 
\begin{algorithmic}[1]
	\REQUIRE{ $\mu\in(0,1)$}.
	\IF{$\Ical(X^{k+1}) \subset \Ical(X^{k})$}\label{update_eps:start}
	\STATE  $\epsilon_{i}^{k+1} = \mu \epsilon_i^k$,   $ \forall i \in  \Ical^{k+1}.$ \label{update_eps:NonzerosFrac}
	\STATE  Set $\tau_1 = \sigma_{|\mathcal{I}^{k+1}|}^{k+1} + \epsilon_{|\mathcal{I}^{k+1}|}^{k+1} $ and $\tau_2  = \epsilon_{|\mathcal{I}^{k+1}|+1}^{k}$. \label{update_eps:1stthreshold}
	\STATE  $\epsilon_{i}^{k+1} = 
	\begin{cases} 
		\epsilon_i^k , & \text{ if }  \tau_1 \geq 	\tau_2 , \\
		\min(\epsilon_i^k,\mu \tau_1), & \text{ otherwise, }
	\end{cases}$ $\forall i \in \mathcal{I}(X^{k})\backslash \mathcal{I}(X^{k+1})$.\label{update_eps:ZeroNonzeroIntersection}
	\STATE Set $\tau_3  = \epsilon_{|\mathcal{I}(X^{k})|}^{k+1}$
	\STATE $\epsilon_{i}^{k+1} = \min\{\epsilon_{i}^{k},\tau_3 \}$,  $\forall i \in \mathcal{Z}(X^{k})$. \label{updateeps.z}
	\ENDIF
	\IF{$\Ical(X^k) \subset \Ical(X^{k+1})$} 
	\STATE	$\epsilon_{i}^{k+1} = \mu \epsilon_i^k$,   $ i \in \Ical(X^k).$
	\STATE  Set $\tau_3  = \epsilon_{|\mathcal{I}(X^{k})|}^{k}$
	\STATE 	$\epsilon_{i}^{k+1} =\mu \min \{\epsilon_{i}^{k}, \tau_3 \}$, $\forall i \in \mathcal{I}(X^{k+1})\backslash\mathcal{I}(X^{k})$ \label{update_eps:Zeros}
	\STATE 	Set $\tau_1 = \sigma_{|\mathcal{I}(X^{k+1})|}^{k+1} + \epsilon_{|\mathcal{I}(X^{k+1})|}^{k+1} $ and $\tau_2  = \epsilon_{|\mathcal{I}(X^{k+1})|+1}^{k}$.
	\STATE $\epsilon_{i}^{k+1} = 
	\begin{cases} 
		\epsilon_i^k , & \text{ if }  \tau_1 \geq \tau_2 , \\
		\min(\epsilon_i^k,\mu \tau_1), & \text{ otherwise, }
	\end{cases}$ $\forall i \in \mathcal{Z}(X^{k+1})$.
	\ENDIF
	\IF{$\Ical(X^k) = \Ical(X^{k+1})$}
	\STATE $\epsilon_{i}^{k+1} = \mu \epsilon_i^k, \ \forall  i \in \mathcal{I}(X^{k+1}).$\label{update_eps:nonzerodemishes}
	\STATE Set $\tau_1 = \sigma_{|\mathcal{I}(X^{k+1})|}^{k+1} + \epsilon_{|\mathcal{I}(X^{k+1})|}^{k+1} $ and $\tau_2  = \epsilon_{|\mathcal{I}(X^{k+1})|+1}^{k}$.  
	\STATE $\epsilon_{i}^{k+1} = 
	\begin{cases} \epsilon_i^k,              & \text{ if } \tau_1 \geq \tau_2 , \\
		\min(\epsilon_i^k,\mu \tau_1 ),  & \text{ otherwise,} 
	\end{cases}$ $\forall i\in \mathcal{Z}(X^{k+1})$. \label{update_eps:zerofixed}
	\ENDIF
\end{algorithmic}
\end{algorithm}

Assume the initial  $\epsilon_{i}^0, i \in[m]$ are in descending order.   \Cref{Algo:update_eps} includes three cases.  Our focus is mainly on providing detailed explanations for the first case, as other cases follow similar arguments. \textbf{Case 1: $\Ical(X^{k+1}) \subset \Ical(X^{k})$ holds true  in Line \ref{update_eps:start}.} This case corresponds to a situation in which $X^{k+1}$ have more zero singular values than $X^k$, meaning  $\vert \mathcal{I}(X^{k+1})\vert < \vert \mathcal{I}(X^{k})\vert$, or, equivalently, $\Rank(X^{k+1}) < \Rank(X^k)$. Notice that all elements in $\bm{\sigma}(X^{k+1})$ are organized naturally in descending order. Our goal is to maintain the descending order of $ \sigma_{i}^{k+1} + \epsilon_{i}^{k+1}, i \in [m]$ (or, equivalently, the ascending order of $w_i^{k+1}$, $i\in[m]$).
To achieve this, we first decrease $\epsilon_{i}^{k+1}, \forall i \in \mathcal{I}(X^{k+1})$ by a fraction (Line \ref{update_eps:NonzerosFrac}), so that $\sigma_{i}^{k+1} + \epsilon_{i}^{k+1}, i \in \mathcal{I}(X^{k+1})$ are descending after the update. Let $\tau_1$ be their smallest value.  Line \ref{update_eps:1stthreshold}-\ref{update_eps:ZeroNonzeroIntersection} handle the update of $\epsilon_{i}^{k+1}, \forall i \in \mathcal{I}(X^{k})\backslash \mathcal{I}(X^{k+1})$. 
Let  $\tau_2 $ be the largest value of $\epsilon_{i}^k, \forall i \in \mathcal{I}(X^{k})\backslash \mathcal{I}(X^{k+1})$ (they are in descending order).  If $\tau_1 \ge \tau_2$,  there is no need to reduce $\epsilon_{i}^k, \forall i \in \mathcal{I}(X^{k})\backslash \mathcal{I}(X^{k+1})$, since $\sigma_{i}^{k+1} + \epsilon_{i}^{k+1}, i \in \mathcal{I}^k = \mathcal{I}(X^{k+1}) \cup (\mathcal{I}(X^{k})\backslash \mathcal{I}(X^{k+1}))$ are now descending. Otherwise,  
$\epsilon_{i}^k, \forall i \in \mathcal{I}(X^{k})\backslash \mathcal{I}(X^{k+1})$ is set to be a fraction of $\tau_1$ to maintain this order.   As for $i \in \mathcal{Z}(X^{k})$, letting $\tau_3$ be the smallest value of  $\sigma_{i}^{k+1} + \epsilon_{i}^{k+1}, i \in \mathcal{I}^k$,  we then use threshold $\tau_3 $ to trim $i \in \mathcal{Z}(X^{k})$ (Line \ref{updateeps.z}). 
After the update, $\sigma_{i}^{k+1} + \epsilon_{i}^{k+1}, i \in [m]$ maintain the desired non-decreasing order.
\section{Convergence Analysis}
We make heavy use of the following auxiliary function in our analysis, which is useful in establishing the convergence properties of the proposed algorithm. 
\begin{equation}\label{pro_prox}
	H (X,Y,\epsilon ) = f(X) + \frac{\beta}{2}\|X-Y\|_{F}^{2} +   \lambda\sum_{i=1}^{m} (\sigma_{i} (X)+\epsilon_{i})^{p}.
\end{equation}
The following lemma indicates that $H (X,Y,\epsilon )$ is monotonically nonincreasing. 

\begin{lemma}[Sufficient decrease property of EIRNRI] \label{lem_Dec_acc}
	Suppose Assumptions \ref{ASM_fLip}-\ref{ASM_levelBoundness} are satisfied. Let $\{X^{k}\} $ and $\{Y^{k}\}$ be the sequences generated by \Cref{alg_acc}. Then the following statements hold.
	\begin{itemize}
		\item $\left\{H (X^{k},X^{k-1},\bm{\epsilon}^{k})\right\} $ is monotonically nonincreasing and  $\lim\limits_{k\to +\infty} H(X^{k},X^{k-1},\bm{\epsilon}^{k})$ exists. Indeed, we have for each $ k \in \mathbb{N}$ that
		\begin{equation}\label{eq:usefulIneq0}
			H (X^{k},X^{k-1},\bm{\epsilon}^{k}) - H (X^{k+1},X^{k},\bm{\epsilon}^{k+1})\ge
			C\|X^{k}-X^{k-1}\|_{F}^{2}
		\end{equation}
		with constant $C =  \frac{\beta}{2} (1-\frac{3L_{f}+\beta}{\beta}\bar{\alpha}^{2}) > 0.$
		
		\item $\sum_{k=0}^{\infty} \|X^{k+1}-X^{k}\|_{F}^{2} < +\infty$, implying $\lim\limits_{k\to+\infty} \|X^{k+1}-X^{k}\|_{F}=0$. In addition, $\lim\limits_{k\to+\infty} \max\{\|Y^{k}-X^{k}\|_{F}, \|Y^{k}-X^{k+1}\|_{F}\}=0$. 
		
		\item The sequences $\{X^{k}\}$ and $\{Y^{k}\}$ are bounded. As a result, there exists constant $C_1$ such that for any $i \in [m]$
		\begin{equation*}
			\max\limits_{i} \, \sigma_{i} \left(\frac{X^{k}+Y^k}{2} -\frac{1}{2\beta}\nabla f (Y^{k})\right)\le C_1,\  \forall k \in \mathbb{N}.
		\end{equation*}
	\end{itemize}
\end{lemma}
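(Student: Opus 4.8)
The plan is to prove the three bullets in order, treating the sufficient-decrease estimate \eqref{eq:usefulIneq0} as the engine that drives the other two. Before any estimate, I would isolate the one structural fact about \Cref{Algo:update_eps} on which everything rests: the perturbation is componentwise nonincreasing, i.e. $\epsilon_i^{k+1}\le\epsilon_i^k$ for all $i,k$. Because the singular values are sorted, each set $\Ical(X)$ is a prefix $\{1,\dots,\Rank(X)\}$, so the three branches of \Cref{Algo:update_eps} are exhaustive, and in every branch each assignment is $\mu\epsilon_i^k$, $\epsilon_i^k$, or a minimum taken with $\epsilon_i^k$, all $\le\epsilon_i^k$ since $\mu\in(0,1)$. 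As $s\mapsto s^p$ is increasing, this yields $H(X^{k+1},X^k,\bm{\epsilon}^{k+1})\le H(X^{k+1},X^k,\bm{\epsilon}^{k})$, so $\bm{\epsilon}^{k+1}$ drops out and I may work throughout with the common perturbation $\bm{\epsilon}^{k}$.

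Next I would expand $H(X^{k+1},X^k,\bm{\epsilon}^{k})$ and majorize its two nontrivial pieces: $f(X^{k+1})$ via the $L_f$-smoothness descent inequality anchored at $Y^k$, and each $(\sigma_i(X^{k+1})+\epsilon_i^k)^p$ via the supporting-line inequality of the concave map $(\cdot+\epsilon_i^k)^p$ anchored at $\sigma_i(X^k)$, whose slope is exactly the weight $w_i^k$. The resulting upper bound equals $Q(X^{k+1})-\tfrac{\beta-L_f}{2}\|X^{k+1}-Y^k\|_F^2$, where $Q$ agrees with the surrogate $L$ of \eqref{pro_Linear} up to additive constants (and hence, by \Cref{Theo:SubproblemSolve}, is globally minimized by $X^{k+1}$) and the slack $-\tfrac{\beta-L_f}{2}\|X^{k+1}-Y^k\|_F^2\le0$ comes from the quadratic carrying $L_f/2$ in place of $\beta/2$. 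I then invoke $Q(X^{k+1})\le Q(X^k)$ and compute $Q(X^k)$, in which the weighted-nuclear terms cancel, leaving $f(Y^k)+\langle\nabla f(Y^k),X^k-Y^k\rangle+\tfrac{\beta}{2}\|X^k-Y^k\|_F^2+\lambda\sum_i(\sigma_i(X^k)+\epsilon_i^k)^p$. The final maneuver converts the $Y^k$-terms into $\|X^k-X^{k-1}\|_F^2$: apply the two-sided smoothness bound $f(Y^k)+\langle\nabla f(Y^k),X^k-Y^k\rangle\le f(X^k)+\tfrac{L_f}{2}\|X^k-Y^k\|_F^2$, recognize $f(X^k)+\lambda\sum_i(\sigma_i(X^k)+\epsilon_i^k)^p=H(X^k,X^{k-1},\bm{\epsilon}^{k})-\tfrac{\beta}{2}\|X^k-X^{k-1}\|_F^2$, and substitute $\|X^k-Y^k\|_F^2=\alpha_k^2\|X^k-X^{k-1}\|_F^2\le\bar{\alpha}^2\|X^k-X^{k-1}\|_F^2$ from \eqref{eq: extrapolatedPoint}. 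Collecting terms gives \eqref{eq:usefulIneq0} with a constant of the stated form $C=\tfrac{\beta}{2}(1-\tfrac{3L_f+\beta}{\beta}\bar{\alpha}^2)$, whose positivity is exactly the range $\bar{\alpha}^2<\beta/(\beta+3L_f)$ imposed in the nonconvex branch of \eqref{alpha_update}. Monotonicity follows, and since $H(X,Y,\bm{\epsilon})\ge F(X)\ge\underline{F}>-\infty$ (as $\epsilon_i\ge0$ and by \Cref{ASM_levelBoundness}), the bounded monotone sequence $\{H(X^k,X^{k-1},\bm{\epsilon}^{k})\}$ converges.

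The remaining two bullets are then routine. Telescoping \eqref{eq:usefulIneq0} and using the finite limit of $H$ bounds $\sum_k C\|X^k-X^{k-1}\|_F^2$ by $H(X^0,X^{-1},\bm{\epsilon}^0)-\underline{F}<\infty$, so $\sum_k\|X^{k+1}-X^k\|_F^2<\infty$ and $\|X^{k+1}-X^k\|_F\to0$; the extrapolation identity then gives $\|Y^k-X^k\|_F=\alpha_k\|X^k-X^{k-1}\|_F\to0$ and, by the triangle inequality, $\|Y^k-X^{k+1}\|_F\to0$. For boundedness, monotonicity yields $F(X^k)\le H(X^k,X^{k-1},\bm{\epsilon}^{k})\le H(X^0,X^{-1},\bm{\epsilon}^0)$, placing $\{X^k\}$ in a sublevel set of $F$ that is bounded by \Cref{ASM_levelBoundness}; then $\{Y^k\}$ is bounded as a combination of bounded terms, and continuity of $\nabla f$ on the resulting compact set bounds $\frac{X^k+Y^k}{2}-\frac{1}{2\beta}\nabla f(Y^k)$ in Frobenius norm, hence its largest singular value, by some $C_1$.

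I expect the main obstacle to be the second paragraph: threading the majorization, the global optimality of the subproblem solution, and the extrapolation bound so that every $Y^k$-dependent quantity consolidates into a single negative multiple of $\|X^k-X^{k-1}\|_F^2$ with the sign and constant matching \eqref{alpha_update}. Crucially, the entire chain is legitimate only because \Cref{Algo:update_eps} enforces $\bm{\epsilon}^{k+1}\le\bm{\epsilon}^{k}$, so establishing that monotonicity across all three branches is the structural step that must be secured first.
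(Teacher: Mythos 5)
Your proof is correct, and its skeleton matches the paper's: componentwise monotonicity of $\bm{\epsilon}$ from \Cref{Algo:update_eps} (which the paper also relies on, in step $(a)$ of its chain \eqref{acc_dec_Strong}), the descent lemma for $f$, the concave supporting-line inequality generating the weights $w_i^k$, global optimality of $X^{k+1}$ for the surrogate \eqref{pro_Linear}, the extrapolation identity, and then the same telescoping and level-boundedness arguments for the second and third bullets. The one genuine difference is how you consolidate the $Y^k$-dependent terms. The paper first derives a decrease inequality for $F(\cdot;\bm{\epsilon})$ and bounds $f(Y^k)+\langle\nabla f(Y^k),X^k-Y^k\rangle$ by anchoring the descent lemma at $X^k$ and then applying Cauchy--Schwarz to $\langle\nabla f(Y^k)-\nabla f(X^k),X^k-Y^k\rangle$, which costs $\tfrac{3L_f+\beta}{2}\|X^k-Y^k\|_F^2$ in total; you bound the same quantity in one stroke via the lower branch of the two-sided descent lemma anchored at $Y^k$, costing only $\tfrac{L_f+\beta}{2}\|X^k-Y^k\|_F^2$. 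Consequently your chain actually produces the sharper constant $\tfrac{\beta}{2}\bigl(1-\tfrac{L_f+\beta}{\beta}\bar{\alpha}^2\bigr)$ rather than the stated $C=\tfrac{\beta}{2}\bigl(1-\tfrac{3L_f+\beta}{\beta}\bar{\alpha}^2\bigr)$; since the former dominates the latter, \eqref{eq:usefulIneq0} with the stated $C$ follows a fortiori, so your phrasing that the computation "gives the stated constant" is slightly loose but harmless. One tacit point: invoking \Cref{Theo:SubproblemSolve} for global optimality of $X^{k+1}$ presupposes the ascending-weight condition \eqref{eq:AscendingWeights}, which is only justified by \Cref{thm.update}(i); the paper's own proof of this lemma leans on that ordering in exactly the same implicit way, so this is a forward reference shared with the paper rather than a gap in your argument.
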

\begin{proof}
	
	(i) From \eqref{eq:NewIter}, we know that $X^{k+1} \in \argmin_{X \in \mathbb{R}^{m\times n}} L(X;X^{k},Y^{k},\bm{\epsilon}^{k})$, and it follows from $ L(X^{k+1};X^{k},Y^{k},\bm{\epsilon}^{k}) \leq L(X^{k};X^{k},Y^{k},\bm{\epsilon}^{k})$ that
	\begin{equation} \label{weighted_sigma_k}
		\begin{aligned}
			&\ \langle X^{k+1},\nabla{f 	(Y^{k})}\rangle+\frac{\beta}{2}\|X^{k+1}-Y^{k}\|+ \frac{\beta}{2}\|X^{k+1}-X^{k}\|_{F}^{2} +\lambda\sum\limits_{i=1}^{m}w_{i}^{k}\sigma_i(X^{k+1}) \\
			\le&\ \langle X^{k},\nabla{f 	(Y^{k})}\rangle+\frac{\beta}{2}\|X^{k}-Y^{k}\|+\lambda\sum\limits_{i=1}^{m}w_{i}^{k}\sigma_i(X^k).
		\end{aligned}
	\end{equation}
	By rearranging \eqref{weighted_sigma_k} and adding a positive term on both sides, it holds that 
	\begin{equation}\label{eq:usefulIneq}
		\begin{aligned}
			&\ \langle X^{k+1},\nabla{f 	(Y^{k})}\rangle+\frac{\beta}{2}\|X^{k+1}-Y^{k}\| +\lambda\sum\limits_{i=1}^{m}w_{i}^{k}(\sigma_i(X^{k+1})-\sigma_i(X^k)) + \lambda\sum\limits_{i=1}^{m}(\sigma_i(X^k)+\epsilon_{i}^k)^p\\
			\le&\ \langle X^{k},\nabla{f 	(Y^{k})}\rangle+\frac{\beta}{2}\|X^{k}-Y^{k}\| -  \frac{\beta}{2}\|X^{k+1}-X^{k}\|_{F}^{2} + \lambda\sum\limits_{i=1}^{m}(\sigma_i(X^k)+\epsilon_{i}^k)^p.
		\end{aligned}
	\end{equation}  
	Denote $\phi(X;Y) = f(Y) + \langle \nabla f(Y), X - Y\rangle$ for notional convenience. It leads us to 
	\begin{equation}\label{acc_dec_Strong}
		\begin{aligned}
			& \ F(X^{k+1};\bm{\epsilon}^{k+1})\\
			\overset{(a)}{\leq} & \ \phi(X^{k+1};Y^{k}) + \frac{L_{f}}{2}\|X^{k+1}-Y^{k}\|_{F}^{2} +\lambda\sum\limits_{i=1}^{m} (\sigma_i(X^{k+1}) +\epsilon_{i}^{k})^{p} \\
			\overset{(b)}{\leq} &\ \phi(X^{k+1};Y^{k}) + \frac{\beta}{2} \|X^{k+1}-Y^{k}\|_{F}^{2}+ \lambda\sum\limits_{i=1}^{m}(\sigma_i(X^k)+\epsilon_{i}^k)^p \\
			&\  +\lambda\sum\limits_{i=1}^{m} w_{i}^{k}(\sigma_i(X^{k+1})-\sigma_i(X^{k})) \\
			\overset{(c)}{\leq} &\ \phi(X^{k};Y^{k})+\frac{\beta}{2}\|X^{k}-Y^{k}\|_{F}^{2} - \frac{\beta}{2}\|X^{k+1}-X^{k}\|_{F}^{2} +\lambda\sum\limits_{i=1}^{m} (\sigma_i(X^k)+\epsilon_{i}^{k})^{p}  \\
			\overset{(d)}{\leq} &\ f(X^{k})+\langle \nabla{f (Y^{k})-\nabla f (X^{k})},X^{k}-Y^{k}\rangle+\frac{L_{f}+\beta}{2}\|X^{k}-Y^{k}\|_{F}^{2}\\
			&\ - \frac{\beta}{2}\|X^{k+1}-X^{k}\|_{F}^{2} + \lambda\sum\limits_{i=1}^{m} (\sigma_i(X^k)+\epsilon_{i}^{k})^{p} \\
			\overset{(e)}{\leq} &\ F(X^{k},\bm{\epsilon}^{k})+\frac{3L_{f}+\beta}{2}\|X^{k}-Y^{k}\|_{F}^{2}- \frac{\beta}{2}\|X^{k+1}-X^{k}\|_{F}^{2},
		\end{aligned}
	\end{equation}
	where inequalities $(a)$ follows from the result in \cite[Lemma 1.2.3]{nesterov2003introductory} under \Cref{ASM_fLip} and leverages the monotonicity of $(\cdot)^{p}$ over $\mathbb{R}_{+}$ resulting from the nonincreasing property of $\sigma_i(X^{k})+\epsilon_i^k, \forall i\in [m],  k \in \mathbb{N}$ by \Cref{Algo:update_eps}, $(b)$ is true because $\beta > L_f$ and the concavity of $(\cdot)^{p}$ over $\mathbb{R}_{+}$, $(c)$ makes use of \eqref{eq:usefulIneq}, $(d)$ again follows from the result in \cite[Lemma 1.2.3]{nesterov2003introductory} under \Cref{ASM_fLip}, and $(e)$ is by the Cauchy-Schwarz inequality and hence immediately a consequence of \Cref{ASM_fLip}.
	
	Rearranging \eqref{acc_dec_Strong}, together with \eqref{eq: extrapolatedPoint}, yields   
	\begin{equation}\label{eq:UsefulIneq2}
		F (X^{k};\bm{\epsilon}^{k}) - F (X^{k+1};\bm{\epsilon}^{k+1}) \ge  \frac{\beta}{2}\|X^{k}-X^{k+1}\|_{F}^{2} - \frac{3L_{f}+\beta}{2}\alpha_{k}^{2}\|X^{k}-X^{k-1}\|_{F}^{2}.
	\end{equation}
	This further implies 
	\begin{equation}\label{acc_dec_H}
		\begin{aligned}
			&\ H (X^{k},X^{k-1},\bm{\epsilon}^{k})-H (X^{k+1},X^{k},\bm{\epsilon}^{k+1}) \\ 
			=&\ F (X^{k};\bm{\epsilon}^{k})+\frac{\beta}{2}\|X^{k}-X^{k-1}\|_{F}^{2} - \left[F (X^{k+1};\bm{\epsilon}^{k+1})+\frac{\beta}{2}\|X^{k+1}-X^{k}\|_{F}^{2}\right] \\
			\overset{(a)}{\ge}&\ \frac{\beta}{2} \left(1-\alpha_{k}^{2}\frac{3L_{f}+\beta}{\beta}\right)\|X^{k}-X^{k-1}\|_{F}^{2} \\
			\overset{(b)}{\ge}&\ \frac{\beta}{2} \left(1-\bar{\alpha}^{2}\frac{3L_{f}+\beta}{\beta}\right)\|X^{k}-X^{k-1}\|_{F}^{2}\geq 0,
	\end{aligned}\end{equation} 
	where inequality $(a)$ holds by \eqref{eq:UsefulIneq2} and $(b)$ is true thanks to $\alpha_{k}\in[0,\bar{\alpha}], \forall k \in \mathbb{N}$ with $\bar{\alpha} \in (0,\sqrt{\frac{\beta}{3L_{f}+\beta}})$ imposed in \eqref{alpha_update}.
	Consequently,  $\{H (X^{k},X^{k-1},\bm{\epsilon}^{k})\} $ is monotonically nonincreasing. Moreover, by \Cref{ASM_levelBoundness}, we know $\min_{X \in \mathbb{R}^{m\times n}} F(X,\bm{\epsilon}) > \underline{F} >-\infty$, implying $\{H (X^{k},X^{k-1},\bm{\epsilon}^{k})\}$ is bounded from below, and hence, $\lim\limits_{k\to +\infty} H(X^{k},X^{k-1},\bm{\epsilon}^{k})$ exists.
	This proves statement (i).

	(ii) Summing both sides of \eqref{acc_dec_H} over $k = 0, \ldots, t $, we obtain
	\begin{equation}
		\begin{aligned}
			\frac{\beta}{2} 	\left(1-\bar{\alpha}^{2}\frac{3L_{f}+\beta}{\beta}\right)\sum\limits_{k=0}^{t} \|X^k-X^{k-1}\|_{F}^{2} &\le\ H(X^{0},X^{-1},\bm{\epsilon}^{0}) - H 	(X^{t+1},X^{t},\bm{\epsilon}^{t+1}) \\
			&\le\ F (X^{0};\bm{\epsilon}^{0}) -   F(X^{t+1};\bm{\epsilon}^{t+1}) 
			<\ +\infty,
		\end{aligned}
	\end{equation}
	Let $t\to+\infty$, and it follows from \Cref{ASM_levelBoundness} and $\bar{\alpha} \in (0,\sqrt{\frac{\beta}{3L_{f}+\beta}})$ that 
	$\lim\limits_{k\to+\infty}\|X^{k+1}-X^{k}\|_{F}=0 $. Furthermore,  
	\begin{equation}
		\begin{aligned}
			\lim\limits_{k\to+\infty}\|Y^{k}-X^{k}\|_{F} &= \lim\limits_{k\to+\infty}\alpha_{k}\| (X^{k}-X^{k-1}) \|_{F} = 0, \textrm{ and }\\
			\lim\limits_{k\to+\infty}\|Y^{k}-X^{k+1}\|_{F} &=  \lim\limits_{k\to+\infty}\Vert(X^{k}-X^{k+1})+\alpha_{k} (X^{k}-X^{k-1})\Vert_{F} = 0,
		\end{aligned}
	\end{equation}
	as desired. The proof of statement (ii) is completed.
	
	(iii) For each $k \in \mathbb{N}$, we derive that
	\begin{equation} \label{ineq_MM_chain}
		F (X^{k}) \le F (X^{k};\bm{\epsilon}^{k}) \le H (X^{k},X^{k-1},\bm{\epsilon}^{k}) \overset{(a)}{\leq} H(X^{0},X^{-1},\bm{\epsilon}^{0}) = F (X^{0};\bm{\epsilon}^{0}) \overset{(b)}{<} +\infty,
	\end{equation}
	where inequality $(a)$ holds by \eqref{acc_dec_H} and \Cref{thm.update}(i), and $(b)$ is true due to \Cref{ASM_levelBoundness}. We hence deduce from \eqref{ineq_MM_chain} that $\{X^{k}\}$ is bounded. This, together with the boundedness of $\alpha_k, \forall k\in \mathbb{N}$, shows the boundedness of $\{Y^{k}\}$. Therefore, it follows from Assumptions \ref{ASM_fLip}-\ref{ASM_levelBoundness} that there exists $C_1 > 0 $ such that for any $i \in [m]$
	\begin{equation*}
		\max\limits_{i} \, \sigma_{i} \left(\frac{X^{k}+Y^k}{2} -\frac{1}{2\beta}\nabla f (Y^{k})\right)\le C_1,\  \forall k \in \mathbb{N}.
	\end{equation*}
	This completes proofs of all statements. 
\end{proof}

\subsection{Local Properties: Stable Support and Adaptively Reweighting}
The following proposition asserts that the support of the index set of singular vectors remains unchanged after a finite number of iterations.
\begin{proposition}\label{THM_away_zero}
	Let Assumptions \ref{ASM_fLip}-\ref{ASM_levelBoundness} hold. Suppose the sequence $\{X^{k}\} $ is generated by \Cref{alg_acc}. Then there exists constant $C_1>0$ defined in \Cref{lem_Dec_acc} and $\hat{k} \in\mathbb{N}$ such that the following statements hold.
	\begin{itemize}
		\item If $w(\sigma_{i} ({X^{\hat{k}}}),\epsilon_{i}^{\hat{k}}) > \frac{2\beta C_1}{\lambda} $ for $\hat{k}\in\mathbb{N}$, then $\sigma_i(X^{k}) \equiv 0 $ for all $k>\hat{k} $.
		
		\item The index set sets $\mathcal{I}(X^k)$ and $\mathcal{Z}(X^{k})$ remain unchanged for all $k > \hat{k}$. Therefore, there exists index sets $\Ical^*$ and $\mathcal{Z}^*$ such that $ \mathcal{I}(X^k) = \Ical^*$ and $\mathcal{Z}(X^{k})=\mathcal{Z}^*$ for sufficiently large $k$. 
		
		\item For any $k > \hat{k}$, $\sigma_i^k$ is strictly bounded away from $0$ for any $i \in \mathcal{I}(X^k)$. Indeed, it holds that $\sigma_i^k> \left(\frac{\lambda p}{2\beta C_1}\right)^{\frac{1}{1-p}}-\epsilon_{i}^{k}> 0 , \forall i\in\mathcal{I} (X^{k})$, implying $\liminf\limits_{k\to+\infty} \sigma_i^k > \left(\frac{\lambda p}{2\beta C_1}\right)^{\frac{1}{1-p}}, \forall i\in\mathcal{I}^*$.   
	\end{itemize}     
\end{proposition}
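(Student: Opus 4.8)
The plan is to lean on three ingredients. First, by \Cref{Theo:SubproblemSolve} the new iterate is obtained by soft-thresholding the singular values: writing $M^k := \frac{X^k+Y^k}{2}-\frac{\nabla f(Y^k)}{2\beta}$ and $\Sigma^{k+1}_i := \sigma_i(M^k)$, we have $\sigma_i(X^{k+1}) = \left[\Sigma^{k+1}_i - \frac{\lambda w_i^k}{2\beta}\right]_+$, and by \Cref{lem_Dec_acc}(iii) the thresholding targets are uniformly bounded, $\Sigma^{k+1}_i \le C_1$ for all $i,k$. Second, \Cref{Algo:update_eps} keeps $\sigma_i(X^k)+\epsilon_i^k$ nonincreasing in $k$ (already exploited in \Cref{lem_Dec_acc}) and keeps $\epsilon_i^k$ itself nonincreasing, since every branch sets $\epsilon_i^{k+1}$ to $\mu\epsilon_i^k$, to some $\min\{\epsilon_i^k,\cdot\}$, or to $\epsilon_i^k$. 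Because $p-1<0$, a nonincreasing base makes each weight $w_i^k = p(\sigma_i(X^k)+\epsilon_i^k)^{p-1}$ nondecreasing in $k$. Third, abbreviating $\theta := \left(\frac{\lambda p}{2\beta C_1}\right)^{\frac{1}{1-p}}$, inverting the power (again $p-1<0$) gives the equivalence $w_i^k < \frac{2\beta C_1}{\lambda} \iff \sigma_i(X^k)+\epsilon_i^k > \theta$, and likewise with $\ge$ and $\le$.

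The first statement is then immediate from weight monotonicity: if $w_i^{\hat{k}} > \frac{2\beta C_1}{\lambda}$ then $w_i^k \ge w_i^{\hat{k}} > \frac{2\beta C_1}{\lambda}$ for all $k \ge \hat{k}$, so $\frac{\lambda w_i^k}{2\beta} > C_1 \ge \Sigma^{k+1}_i$ and the thresholding yields $\sigma_i(X^{k+1}) = 0$; hence $\sigma_i(X^k)\equiv 0$ for $k>\hat{k}$. I then record the converse one-step bound that powers the rest: whenever $\sigma_i(X^{k+1})>0$, the thresholding forces $\Sigma^{k+1}_i > \frac{\lambda w_i^k}{2\beta}$, so $w_i^k < \frac{2\beta\Sigma^{k+1}_i}{\lambda} \le \frac{2\beta C_1}{\lambda}$, i.e. $\sigma_i(X^k)+\epsilon_i^k > \theta$ by the equivalence above (which reads $\epsilon_i^k > \theta$ when $\sigma_i(X^k)=0$).

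For the second statement I must show the binary indicator $k \mapsto \mathbf{1}[i\in\mathcal{I}(X^k)]$ is eventually constant for each $i$; the only thing to rule out is a \emph{revival} $\sigma_i(X^k)=0 \to \sigma_i(X^{k+1})>0$ occurring infinitely often, and this is the main obstacle. I resolve it with the $\epsilon$-update: at a revival $i\in\mathcal{I}(X^{k+1})\setminus\mathcal{I}(X^k)$, so \Cref{Algo:update_eps} (Line \ref{update_eps:Zeros}) sets $\epsilon_i^{k+1}=\mu\min\{\epsilon_i^k,\tau_3\}\le\mu\epsilon_i^k$, whereas $\epsilon_i^k$ never increases otherwise. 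Thus if revivals happened at times $k_1<k_2<\cdots$, then $\epsilon_i^{k_{j+1}} \le \epsilon_i^{k_j+1} \le \mu^{\,j-1}\,\epsilon_i^{k_1+1}\to 0$; but the one-step bound applied at $k_{j+1}$ forces $\epsilon_i^{k_{j+1}} > \theta>0$, a contradiction for large $j$. Hence each index revives only finitely often, and since $0\to1$ and $1\to0$ switches of the indicator must alternate, the indicator has finitely many switches in total and stabilizes. With finitely many indices this produces a common $\hat{k}$ and fixed sets $\mathcal{I}^*,\mathcal{Z}^*$.

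The third statement follows by applying the one-step bound in the stabilized regime: for $k>\hat{k}$ and $i\in\mathcal{I}^*=\mathcal{I}(X^{k+1})$ we have $\sigma_i(X^{k+1})>0$, hence $\sigma_i(X^k)+\epsilon_i^k>\theta$, i.e. $\sigma_i^k > \theta-\epsilon_i^k$. Once the support is fixed, Case~3 of \Cref{Algo:update_eps} (Line \ref{update_eps:nonzerodemishes}) multiplies $\epsilon_i^k$ by $\mu$ at every step for $i\in\mathcal{I}^*$, so $\epsilon_i^k\to 0$ geometrically; enlarging $\hat{k}$ so that $\epsilon_i^k<\theta$ gives $\theta-\epsilon_i^k>0$, and letting $k\to\infty$ yields $\liminf_k \sigma_i^k \ge \theta$, matching the asserted bound.
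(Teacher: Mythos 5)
Your proofs of statements (ii) and (iii) are essentially sound, but there is a genuine gap in your proof of statement (i). Your second ingredient --- that \Cref{Algo:update_eps} keeps $\sigma_i(X^k)+\epsilon_i^k$ nonincreasing in $k$, hence keeps each weight $w_i^k=p(\sigma_i(X^k)+\epsilon_i^k)^{p-1}$ nondecreasing in $k$ --- is not justified, and it is the load-bearing step of your argument for (i). \Cref{Algo:update_eps} controls only the perturbations, guaranteeing $\epsilon_i^{k+1}\le\epsilon_i^k$; it has no control over $\sigma_i(X^{k+1})$, which is determined by the subproblem \eqref{eq:NewIter} and can exceed $\sigma_i(X^k)$ by more than $\epsilon_i^k-\epsilon_i^{k+1}$ (for instance, in early iterations started from a zero or small matrix the singular values grow). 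The phrase you cite from the proof of \Cref{lem_Dec_acc} is a misstatement in the paper: the inequality actually used there compares $(\sigma_i(X^{k+1})+\epsilon_i^{k+1})^p$ with $(\sigma_i(X^{k+1})+\epsilon_i^{k})^p$ --- the \emph{same} iterate on both sides --- so only $\epsilon_i^{k+1}\le\epsilon_i^k$ is needed. Consequently the implication ``$w_i^{\hat k}>\tfrac{2\beta C_1}{\lambda}\Rightarrow w_i^k\ge w_i^{\hat k}$ for all $k\ge\hat k$'' does not follow as you state it.

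The repair is exactly the paper's inductive argument, and it is short: the first thresholding step is valid as you wrote it ($w_i^{\hat k}>\tfrac{2\beta C_1}{\lambda}$ gives $\tfrac{\lambda w_i^{\hat k}}{2\beta}>C_1\ge\Sigma_i^{\hat k+1}$, so $\sigma_i(X^{\hat k+1})=0$); and \emph{once the singular value has been zeroed}, weight monotonicity does hold for that index, since $\sigma_i(X^{\hat k+1})+\epsilon_i^{\hat k+1}=\epsilon_i^{\hat k+1}\le\epsilon_i^{\hat k}\le\sigma_i(X^{\hat k})+\epsilon_i^{\hat k}$, whence $w_i^{\hat k+1}\ge w_i^{\hat k}>\tfrac{2\beta C_1}{\lambda}$, and induction closes statement (i). Apart from this, your treatment of (ii) and (iii) is correct and, importantly, does not depend on the flawed claim: your one-step bound ($\sigma_i(X^{k+1})>0\Rightarrow\sigma_i(X^k)+\epsilon_i^k>\theta$ with $\theta=(\tfrac{\lambda p}{2\beta C_1})^{1/(1-p)}$) is valid, and your contradiction for (ii) --- each revival costs a factor $\mu$ on $\epsilon_i$ while also requiring $\epsilon_i^k>\theta$ --- is slightly more direct than the paper's, which routes the same contradiction through statement (i) (infinitely many revivals force $p(\epsilon_i^{\hat k})^{p-1}\ge\tfrac{2\beta C_1}{\lambda}$ at some zero-time $\hat k$, after which (i) forbids further revivals). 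One last small point, shared with the paper's own proof: the argument for (iii) only yields $\liminf_{k\to+\infty}\sigma_i^k\ge\theta$, not the strict inequality displayed in the proposition.
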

\begin{proof}
	Recall \eqref{eq:NewIter} and by \Cref{prop_opt}, we know that matrices $X^{k+1} $ and $\frac{X^{k}+Y^k}{2} -\frac{1}{2\beta}\nabla f (X^{k}) $ have the simultaneous ordered SVD, and 
	\begin{equation*}
		\bm{0} \in \beta\left( X^{k+1} -\left(\frac{X^{k}+Y^{k}}{2}-\frac{\nabla f(Y^k)}{2\beta} \right)  \right) + \frac{\lambda}{2}\partial\left(\sum_{i=1}^{m} w_{i}^{k} \sigma_{i}(X^{k+1}) \right),
	\end{equation*} 
	which implies  
	\begin{equation}\label{KKT_sgv}
		\bm{0} =  \bm{\sigma}(X^{k+1}) - \bm{\sigma} \left(\frac{X^{k}+Y^k}{2} -\frac{1}{2\beta}\nabla f (Y^{k})\right) + \frac{\lambda}{2\beta}{\bm w}^{k}\circ\bm{\xi}^{k+1}
	\end{equation}
	with ${\xi}_{i}^{k+1} \in[0,1] $. Then, we have for each $i \in [m]$ that 
	\begin{equation}\label{eq:nonnegativeSVDvalue}
		\sigma_i(X^{k+1})  = \left[ \sigma_{i} \left(\frac{X^{k}+Y^k}{2} -\frac{1}{2\beta}\nabla f (Y^{k})\right) - \frac{\lambda}{2\beta}w_{i}^{k}{\xi}_{i}^{k+1} \right]_{+}.
	\end{equation}

	(i) Suppose that there exists $\hat k \in \mathbb{N}$ such that $w_{i}^{\hat k} \ge \frac{2\beta C_1}{\lambda} $ for $i\in[m]$. 
	By \Cref{prop_opt} and from \eqref{eq:nonnegativeSVDvalue}, we know that $\sigma_i(X^{\hat k+1}) = 0$. Then, $\sigma_{i}(X^{\hat{k}+1})+ \epsilon_{i}^{\hat{k}+1} \le \sigma_{i}(X^{\hat{k}}) + \epsilon_{i}^{\hat{k}}$ by \Cref{Algo:update_eps} and monotonicity of $ (\cdot)^{p-1} $ indicate 
	$w_{i}^{\hat{k}+1} \geq w_{i}^{\hat{k}} >  \frac{2\beta C_1}{\lambda}$.  
	Therefore, we have $\sigma_i^{\hat k+2} = 0$. By induction we know that $\sigma_i^{k} \equiv 0 $ for any $k>\hat{k} $. This completes the proof of statement (i).
	
	(ii)       
	We prove this by contradiction. Suppose this statement is not true. Then there exist $i\in[m] $ and $k \in \mathbb{N}$ such that $\sigma_i(X^k) $ takes zero and nonzero value both for infinite times. We know that there are two subsequences $\mathcal{S}_{1}\cup\mathcal{S}_{2}=\mathbb{N} $ such that $|\mathcal{S}_{1}|=+\infty $, $|\mathcal{S}_{2}|=+\infty $ and that 
	\begin{equation*}
		\sigma_i(X^{k}) = 0 ,\; \forall k\in\mathcal{S}_{1} \text{ and } 
		\sigma_i(X^{k}) > 0 ,\; \forall k\in\mathcal{S}_{2}.
	\end{equation*}
	Hence,  there exists subsequence $\mathcal{S}_3 \subset \mathcal{S}_2$ such that $\vert \mathcal{S}_3\vert = +\infty$ and $i \in \mathcal{Z}^{X^{k}} \cap \mathcal{I}(X^{k+1})$ for any $k \in \mathcal{S}_3$. In other words, $\sigma_{i}(X^{k}) = 0$ and $\sigma_{i}(X^{k+1}) \neq 0$ for any $k \in \mathcal{S}_3$. Thus, $\mathcal{I}^{k} \subset \mathcal{I}^{k+1}$, $\forall k \in \mathcal{S}_3$. It then follows from \Cref{Algo:update_eps} that $\lim\limits_{k\to+\infty}\epsilon_{i}^{k}=0$ for $k \in \mathcal{S}_3$ owing to $|\mathcal{S}_{3}|=+\infty$. 
	Hence there exists $\hat{k}\in\mathcal{S}_1$ such that 
	\begin{equation*}
		w_{i}^{\hat{k}}=w(\sigma_i^{\hat{k}} ,\epsilon_i^{\hat{k}}) = p \left(\sigma_i (X^{\hat{k}})+\epsilon_i^{\hat{k}} \right)^{p-1} = p (\epsilon_i^{\hat{k}})^{p-1} \ge \frac{2\beta C_1}{\lambda}. 
	\end{equation*}
	This indicates that $\sigma_i (X^{k})=0 $ for any $k>\hat{k}$ by statement (i), implying $\{\hat{k}+1,\hat{k}+2,\hat{k}+3,\cdots\}\subset\mathcal{S}_{1} $ and $|\mathcal{S}_{2}|$ is finite. This contradicts $|\mathcal{S}_{2}|=+\infty $. Consequently, statement (ii) holds. 
	
	(iii) We know from statement (i)  that if $w_{i}^{k}\leq\frac{2\beta C_1}{\lambda}$, $i\in\Ical(X^k)$ occurs, it then follows that  
	\begin{equation*}
		\sigma_i(X^k)\geq \left(\frac{\lambda p}{2\beta C_1}\right)^{\frac{1}{1-p}}-\epsilon_{i}^{k} > 0,\ \forall i\in\mathcal{I}(X^k).
	\end{equation*}
	This completes proofs of all statements.
\end{proof} 

We shall further demonstrate the properties for the updating strategy of $\bm{\epsilon}$. We specifically show that after some $k$, $\epsilon_{i}, \forall i \in \mathcal{I}(X^{k})$ diminish while $\epsilon_{i}, \forall i \in \mathcal{Z}(X^{k})$ are fixed as constants. Hence, our proposed \Cref{alg_acc} locally behaves like minimizing a smooth problem in a low-dimensional manifold.

\begin{theorem} \label{thm.update}
	Suppose \Cref{ASM_fLip}-\ref{ASM_levelBoundness} hold true. Let $\{X^{k}\} $ and  $\{\bm{\epsilon}^{k}\}$ be the sequences generated by \Cref{alg_acc} and $\Ical^*$ and $\mathcal{Z}^*$ be defined in Proposition \ref{THM_away_zero}(ii). Then the following asserts hold. 
	\begin{itemize}
		\item The perturbations $\{ \epsilon_i^k, \forall i \in \Ical(X^k)\}$ and perturbed singular values $\{\sigma_i(X^{k})+\epsilon_i^k, \forall i\in[m]\}$ are all in strictly descending order for $k \in \mathbb{N}$, while $\{ \epsilon_i^k, \forall i\in \mathcal{Z}(X^k)\}$ are in non-increasing order for $k \in \mathbb{N}$. Consequently, for all $ k \in \mathbb{N}$, the non-strictly ascending order constraint on the non-negative weights in \eqref{eq:AscendingWeights}  can be automatically satisfied.
		
		\item There exist $\hat{k} \in \mathbb{N}$ such that for all $k \geq \hat{k}$, the update $\epsilon_{i}^{k+1} = \mu \epsilon_{i}^{k}$ with $\mu \in (0,1)$, $\forall i \in \Ical^{*}$   will always be triggered. Consequently, the sequence $\{\epsilon_i^k\}, \forall i \in \Ical^*$ converges monotonically to $0$, i.e., $\epsilon_i^k \to 0$ as $k \to +\infty$ for all $i \in \Ical^*$.
		
		\item There exists $\hat{k} \in \mathbb{N}$ such that for all $k \geq \hat{k}$, the update of
		$\epsilon_i^{k}, \forall i\in\mathcal{Z}(X^{k})$ will never be triggered. That is, $\epsilon_i^{k} \equiv \epsilon_i^{\hat{k}}$ after some $\hat{k}$, $\forall i\in\mathcal{Z}^*$. Consequently, the sequence $\{\epsilon_i^k\}$, $\forall i \in \mathcal{Z}^*$ converges to fixed positive constants for all sufficiently large $k$.
	\end{itemize}
\end{theorem}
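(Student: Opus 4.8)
The plan is to establish the three assertions in order, obtaining the ordering property by a structural induction on the update rule, and then deriving the eventual decay on $\Ical^*$ and the eventual freezing on $\mathcal{Z}^*$ from \Cref{THM_away_zero}. For the \textbf{first assertion}, I would argue by induction on $k$. The base case is the initialization $\bm{\epsilon}^0\in\mathbb{R}^m_{++}\cap\mathbb{R}^m_{\downarrow}$ together with the fact that $\bm{\sigma}(X^0)$ is, by definition of the SVD, non-increasing, so that $\{\sigma_i(X^0)+\epsilon_i^0\}$ is descending. For the inductive step I would treat the three mutually exhaustive branches of \Cref{Algo:update_eps} separately, using throughout that $\bm{\sigma}(X^{k+1})$ is automatically non-increasing. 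The uniform scaling $\epsilon_i^{k+1}=\mu\epsilon_i^k$ over $\Ical(X^{k+1})$ preserves the strict descending order of the perturbations on the nonzero block, while the thresholds $\tau_1,\tau_2,\tau_3$ are engineered precisely so that $\sigma_i^{k+1}+\epsilon_i^{k+1}$ stays non-increasing across the boundary between $\Ical(X^{k+1})$ and $\mathcal{Z}(X^{k+1})$: $\tau_1$ records the smallest perturbed nonzero value and each zero-block perturbation is capped below $\tau_1$ exactly when the natural order would otherwise break. The weight consequence is then immediate: since $t\mapsto p\,t^{p-1}$ is strictly decreasing on $\mathbb{R}_{++}$ for $p\in(0,1)$, a non-increasing $\{\sigma_i^k+\epsilon_i^k\}$ forces $\{w_i^k\}$ into the non-strictly ascending order required by \eqref{eq:AscendingWeights}.

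For the \textbf{second assertion}, I would invoke \Cref{THM_away_zero}(ii) to obtain $\hat{k}$ with $\Ical(X^k)=\Ical^*$ and $\mathcal{Z}(X^k)=\mathcal{Z}^*$ for all $k\ge\hat{k}$. For such $k$ one has $\Ical(X^k)=\Ical(X^{k+1})$, so the third branch of \Cref{Algo:update_eps} is entered and Line~\ref{update_eps:nonzerodemishes} executes $\epsilon_i^{k+1}=\mu\epsilon_i^k$ for every $i\in\Ical^*$. Iterating yields $\epsilon_i^k=\mu^{\,k-\hat{k}}\epsilon_i^{\hat{k}}$, which decreases monotonically to $0$ since $\mu\in(0,1)$.

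The \textbf{third assertion} is the delicate one, and it is where I expect the main obstacle to lie. Again working for $k\ge\hat{k}$ in the third branch, the whole zero block $\mathcal{Z}^*$ is governed by a single decision: freeze ($\epsilon_i^{k+1}=\epsilon_i^k$) when $\tau_1\ge\tau_2$, and trim ($\epsilon_i^{k+1}=\min(\epsilon_i^k,\mu\tau_1)$) otherwise, where $\tau_1=\sigma_{|\Ical^*|}^{k+1}+\epsilon_{|\Ical^*|}^{k+1}$ and $\tau_2=\epsilon_{|\Ical^*|+1}^{k}$ is the largest zero-block perturbation. Setting $\delta:=\left(\frac{\lambda p}{2\beta C_1}\right)^{1/(1-p)}$ with $C_1$ from \Cref{lem_Dec_acc}, \Cref{THM_away_zero}(iii) gives $\sigma_{|\Ical^*|}^{k+1}+\epsilon_{|\Ical^*|}^{k+1}>\delta$, hence $\tau_1>\delta$ for all $k\ge\hat{k}$. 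By the first assertion $\{\tau_2\}=\{\epsilon_{|\Ical^*|+1}^{k}\}$ is non-increasing, so it converges to some $\ell\ge0$, and a trim strictly decreases it because the trim condition $\tau_1<\tau_2$ forces $\min(\epsilon_{|\Ical^*|+1}^{k},\mu\tau_1)=\mu\tau_1<\tau_1<\tau_2$.

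The crux is to rule out trimming occurring infinitely often. Suppose it does; then along the trim subsequence $\epsilon_{|\Ical^*|+1}^{k+1}=\mu\tau_1^{(k)}\to\ell$ forces $\tau_1^{(k)}\to\ell/\mu$, while the trim condition $\tau_1^{(k)}<\tau_2^{(k)}=\epsilon_{|\Ical^*|+1}^{k}\to\ell$ forces $\ell/\mu\le\ell$; since $\mu<1$ this yields $\ell=0$, contradicting $\tau_1^{(k)}>\delta>0$. Therefore trimming happens only finitely often, so the freeze branch is triggered for all large $k$, the common value on $\mathcal{Z}^*$ is fixed at $\epsilon_i^{\hat{k}}>0$, and $\{\epsilon_i^k\}$ converges to a fixed positive constant for each $i\in\mathcal{Z}^*$. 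I expect this contradiction argument to require the most care, as it is exactly what excludes the pathological scenario in which the zero-block perturbations are driven all the way to zero, which would otherwise destroy the intended eventual ``smooth problem on a manifold'' behavior.
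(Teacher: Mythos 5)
Your proposal is correct and takes essentially the same route as the paper: induction over the branches of \Cref{Algo:update_eps} for the ordering claim, Proposition \ref{THM_away_zero}(ii) combined with Line \ref{update_eps:nonzerodemishes} for the geometric decay $\epsilon_i^k = \mu^{k-\hat{k}}\epsilon_i^{\hat{k}} \to 0$ on $\Ical^*$, and a contradiction built on the lower bound $\tau_1 > \left(\frac{\lambda p}{2\beta C_1}\right)^{1/(1-p)} > 0$ from Proposition \ref{THM_away_zero}(iii) to rule out infinitely many trims on $\mathcal{Z}^*$. If anything, your limit argument for the third assertion (infinite trims force $\mu\tau_1^{(k)} \to \ell$ and $\tau_1^{(k)} < \tau_2^{(k)} \to \ell$, hence $\ell = 0$, contradicting the positive lower bound on $\tau_1$) is a more explicit and careful rendering of the paper's rather terse contradiction, which compresses the same idea into the observation that repeated triggering is incompatible with $\tau_1$ being strictly bounded away from zero.
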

\begin{proof}
	
	(i)  We prove this statement by induction. Indeed, by the setting of $\bm{\epsilon}^{0}$ in \Cref{alg_acc} and the property that the singular values are naturally sorted in descending order, the statement is vacuously true at $k=0$. Suppose now this is also true at the $k$th iteration. Without loss of generality, we only have to prove the statement (i) in which $\mathcal{I}(X^{k+1}) \subset \mathcal{I}(X^{k})$ holds in \Cref{Algo:update_eps}, and the proof of other cases follows the similar spirits and arguments.
	
	Consider \Cref{Algo:update_eps}. Line \ref{update_eps:NonzerosFrac} and Line \ref{update_eps:ZeroNonzeroIntersection} indicate that $\{\epsilon_i^{k+1}, \forall i\in  \Ical(X^{k})\}$ is in descending order by the descending nature of  
	$\{\epsilon_i^k, \forall i\in \Ical(X^k)\}$ and $\mu \in (0,1)$.  Line \ref{update_eps:Zeros} guarantees 
	$\{\epsilon_i^{k+1}, i\in \mathcal{Z}(X^{k})\}$ is non-increasing, since $\{\epsilon_{i}^{k},\forall i \in \mathcal{Z}(X^{k})\}$ are in non-increasing order. This, together with the monotonicity of $(\cdot)^{p-1}$, ensures the satisfaction of \eqref{eq:AscendingWeights}. This finishes the proof of statement (i).

	(ii)  This statement holds true by Proposition \ref{THM_away_zero}(ii) and Line \ref{update_eps:nonzerodemishes} of \Cref{Algo:update_eps}. 
	
	(iii) Seeking a contradiction, suppose the update of $\epsilon_{i}^{k}, \forall i \in \mathcal{Z}(X^{k})$ for $k > \hat{k}$ is triggered for infinite many times. By Proposition \ref{THM_away_zero}(ii), we note that $\epsilon_i^{k+1} \le \mu\tau_1 , i\in\mathcal{Z}(X^{k+1})$ whenever 
	it is reduced by Line \ref{update_eps:zerofixed} in \Cref{Algo:update_eps}.  
	If the update is triggered for infinite many times,  
	then $\tau_2  \le \tau_1 $ is always satisfied for any $k > \hat{k}$, which contradicts that $\tau_1 $ is strictly bounded below from $0$ after some $k > \hat{k}$ by Proposition \ref{THM_away_zero}(iii). 
	Therefore, $\epsilon_i^{k}, i\in\mathcal{Z}(X^{k})$ is never reduced after finite iterations.  This contradiction completes the
	proof.
\end{proof}


\subsection{Rank Identification}

The following theorem establishes the rank identification property of \Cref{alg_acc}, which is a straightforward result from Proposition \ref{THM_away_zero} and \Cref{thm.update}. It asserts the rank of the iterates generated by \Cref{alg_acc} will eventually remain fixed, and is equivalent to the rank of the cluster point $X^*$.  Moreover, all cluster points have the same rank. Consequently, the iterates $\{X^{k}\}$ will eventually reside in a low-dimensional active manifold $\mathcal{M}(X^*):= \{X \in \mathbb{R}^{m\times n}\mid \Rank(X) = \Rank (X^{*})\}$, implying the original problem eventually reverts to a smooth problem after identifying an active manifold $\mathcal{M}(X^{*})$.

\begin{theorem}\label{Theo:RankIdentification}
	Suppose Assumptions \ref{ASM_fLip}-\ref{ASM_levelBoundness} hold true.  Let $\{X^{k}\} $ be the sequence generated by \Cref{alg_acc}. Then $\Rank(X^{k}) = r^*:=|\Ical^*|$ for sufficiently large $k$.  Moreover, for any limit point $X^*$ of $\{X^k\}$,   $\Rank(X^*) =|\Ical(X^*)| = r^*$,  and  $\sigma_i(X^{*})\ge\left(\frac{\lambda p}{2\beta C_1}\right)^{\frac{1}{1-p}} > 0$, $i\in\Ical(X^*)$.   
\end{theorem}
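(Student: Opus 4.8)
The plan is to derive this theorem almost entirely as a consequence of \Cref{THM_away_zero} and \Cref{thm.update}, which already contain all the quantitative ingredients; the only genuinely new work is passing the established bounds to a limit point via continuity of the singular-value map. First I would settle the rank-stabilization claim $\Rank(X^k) = r^*$ for large $k$. Since $\Rank(X) = |\Ical(X)|$ by the very definition of $\Ical(X)$ as the index set of strictly positive singular values, \Cref{THM_away_zero}(ii)---which guarantees $\Ical(X^k) = \Ical^*$ for all $k > \hat{k}$---gives immediately $\Rank(X^k) = |\Ical^*| = r^*$ for all sufficiently large $k$, with no further argument needed.

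Next, for a limit point $X^*$, I would fix a subsequence $X^{k_j} \to X^*$ and invoke the $1$-Lipschitz continuity of each $\sigma_i(\cdot)$ (standard from Weyl's inequality), so that $\sigma_i(X^{k_j}) \to \sigma_i(X^*)$ for every $i \in [m]$. I would then split $[m]$ into $\Ical^*$ and $\mathcal{Z}^*$. For $i \in \mathcal{Z}^*$, since $\sigma_i(X^{k_j}) = 0$ for all large $j$ by \Cref{THM_away_zero}(ii), continuity forces $\sigma_i(X^*) = 0$, hence $\mathcal{Z}^* \subseteq \mathcal{Z}(X^*)$ and $|\mathcal{Z}(X^*)| \ge |\mathcal{Z}^*|$. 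For $i \in \Ical^*$, \Cref{THM_away_zero}(iii) provides the uniform lower bound $\sigma_i(X^{k_j}) > (\lambda p / 2\beta C_1)^{1/(1-p)} - \epsilon_i^{k_j}$; combining this with $\epsilon_i^{k_j} \to 0$ from \Cref{thm.update}(ii) and letting $j \to \infty$ yields $\sigma_i(X^*) \ge (\lambda p / 2\beta C_1)^{1/(1-p)} > 0$, so $\Ical^* \subseteq \Ical(X^*)$ and $|\Ical(X^*)| \ge |\Ical^*| = r^*$. This simultaneously delivers the displayed singular-value lower bound.

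Finally, a dimension count closes the argument: since $|\Ical(X^*)| + |\mathcal{Z}(X^*)| = m = |\Ical^*| + |\mathcal{Z}^*|$ while both inclusions above are of the form ``$\ge$'', the two inequalities must in fact be equalities, giving $|\Ical(X^*)| = r^*$ and hence $\Rank(X^*) = |\Ical(X^*)| = r^*$. I expect the only point requiring genuine care to be ensuring the lower bound survives the passage to the limit, that is, that $\epsilon_i^{k_j} \to 0$ holds along the chosen subsequence---which is safe because \Cref{thm.update}(ii) establishes convergence of the \emph{full} sequence $\{\epsilon_i^k\}$ to zero for each $i \in \Ical^*$, not merely of a subsequence, so no further argument about the interaction between the two limiting processes is required.
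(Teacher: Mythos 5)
Your proposal is correct and takes essentially the same approach as the paper: the paper gives no separate proof of \Cref{Theo:RankIdentification}, stating it only as a straightforward consequence of Proposition~\ref{THM_away_zero} and \Cref{thm.update}. Your argument---rank stabilization via Proposition~\ref{THM_away_zero}(ii), the limit-point lower bound via Proposition~\ref{THM_away_zero}(iii) together with the full-sequence convergence $\epsilon_i^k \to 0$ from \Cref{thm.update}(ii), plus Lipschitz continuity of singular values and the partition counting step---is exactly the elaboration the paper leaves implicit.
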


 \begin{remark}
	\Cref{Theo:RankIdentification} suggests that the proposed \Cref{alg_acc} identifies the rank of the optimal solution after finite iterations.  That is, all subsequent iterates $X^{k}$ satisfy $X^{k} \in \mathcal{M}(X^*)$. Moreover,  all limit points of iterates will be confined to a low-dimensional manifold. This feature of the proposed algorithm represents a stark contrast to the results established in \cite[Theorem 6]{lee2023accelerating}, where the rank identification property merely holds for any convergent subsequence. 
\end{remark}

We illustrate the rank identification property of \Cref{alg_acc} through a simple example. 

\begin{example}
	Consider
	\begin{equation}\label{ex_toy}
		\min_{X \in \mathbb{R}^{15\times 15}} \, \frac{1}{2}\|\mathbf{P}_{\Omega}(X)-\mathbf{P}_{\Omega}(X^*)\| + \lambda \|X\|_{p}^{p},
	\end{equation}
	where $X^{*}\in\mathbb{R}^{15\times 15}$ with $\Rank(X^{*})=3$ is the ground-truth matrix to be found, $\Omega$ denotes the random sample set with sampling ratio $\text{SR} =0.5$ and $|\Omega|$ satisfies $ |\Omega| = \lceil 15^2 * \text{SR}\rceil$, $\mathbf{P}_{\Omega}$ is the projection onto the subspace of sparse matrices with nonzeros restricted to the index set $\Omega$, and $\lambda = 0.1$, $p=0.5$.
\end{example}

\begin{figure}[htbp]
	\centering{
		\includegraphics[scale=0.4]{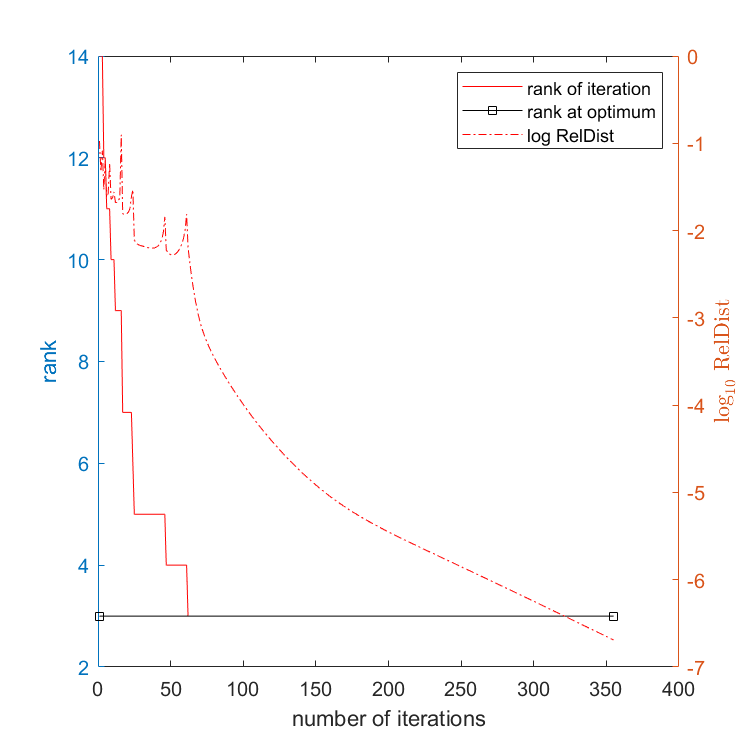}
		\caption{A sample example to show the rank-identification property.}
		\label{fig_rank_identification}}
\end{figure}

The $y$-axis on the left represents the rank and the one on the right represents 
the relative residual.  We use solid lines and dash lines to show the rank of the iterates and relative residual $\|X^k-X^*\|_F^2/\|X^0-X^*\|_F^2$ of our method, respectively.  The gray line represents the rank at the optimum $X^{*}$. We can see from \Cref{fig_rank_identification} when the rank reaches $\rm{rank}(X^{*})$, the relative distance also drops significantly, indicating that it finds a solution with high precision on the smooth manifold. 


\subsection{Global Convergence}
By \Cref{Prop_Psubdifferential}, the necessary optimality condition of \eqref{pro_pri} is given by:
\begin{equation}\label{KKT_condition}
	\nabla{f}(X^{*}) + \lambda U^{*}{\rm diag}(\bm{w}^{*}\circ \bm{\xi}^{*}){V^{*}}^{\top} = \bm{0},  
\end{equation}
where $\bar{\bm{w}}^{*}\in\partial \Vert\bm{\sigma}(X^*)\Vert_{p}^{p}   $, $\bm{\xi}^{*}\in\partial |\bm{\sigma}(X^*)|$,  and $(U^{*},V^{*}) \in \overline{\mathcal{M}}(X^{*})$\\

To show the global convergence properties of \Cref{alg_acc},  we 
investigate the optimality error at $X^{k+1}$. 

\begin{theorem}[Bounded subgradients]\label{cvgc_Global_ACC_sub}
	Suppose Assumptions \ref{ASM_fLip}-\ref{ASM_levelBoundness} hold true. Let $\{X^{k}\} $ be the sequence generated by \Cref{alg_acc}. For each $k\in\mathbb{N}$, define the optimality error associated with \eqref{pro_pri} as
	\begin{equation}\label{KKT_residuals_define}
		E^{k+1} = \nabla{f}(X^{k+1}) + \lambda U^{k+1}{\rm diag}(\bar{\bm{w}}^{k+1}\circ\bm{\xi}^{k+1}) {V^{k+1}}^{\top},
	\end{equation}
	where $\bar{w}_{i}^{k+1} = p ( {\sigma}_i(X^{k+1}) )^{p-1}, i\in\mathcal{I}(X^{k+1})$, $\bar{w}_{i}^{k+1}= w_i^k, i\in\mathcal{Z}(X^{k+1})$ and $\bm{\xi}^{k+1}\in\partial\vert\sigma(X^{k+1})\vert$. Then it holds that $E^{k+1} \in \partial F(X^{k+1})$ and there exists $C_2 > 0 $ such that
	\begin{equation}
		\Vert E^{k+1} \Vert_F \leq (L_{f}+2\beta+C_2)\|X^{k+1}-X^{k}\|_{F} + (L_{f}+\beta)\bar{\alpha}\|X^{k}-X^{k-1}\|_{F} + C_2\epsilon_{1}^0,
	\end{equation}
	with $C_2 = mp(1-p)(C_3)^{p-2}$ and $0 < C_3 \leq \min_{i \in [\vert \mathcal{I}(X^{k+1})\vert]} \sigma_{i}(X^{k+1}),\forall k\in \mathbb{N}$. 
	\begin{proof}
		By \Cref{Prop_Psubdifferential}, we know that $\bar{\bm{w}}^{k+1} \in \partial\Vert\bm{\sigma} (X^{k+1})\Vert_{p}^{p}$. This, together with differentiability of $f$ \cite{rockafellar2009variational}[10.10 Exercise], leads to the desired result $E^{k+1} \in \partial F(X^{k+1})$. On the other hand, by subtracting \eqref{KKT_acc_opt} from both sides of \eqref{KKT_residuals_define}, we have
		\begin{equation}\label{KKT_residuals}
			\begin{aligned}
				E^{k+1} 
				=&\ [\nabla{f}(X^{k+1})  - \nabla{f}(Y^{k})] - \beta [(X^{k+1}-Y^{k}) + (X^{k+1} - X^{k})] \\ 
				&\ + \lambda U^{k+1}{\rm diag}((\bar{\bm{w}}^{k+1} - \bm{w}^{k}) \circ\bm{\xi}^{k+1}) {V^{k+1}}^{\top}.  
			\end{aligned}
		\end{equation}
		It follows from \Cref{ASM_fLip} and \eqref{eq: extrapolatedPoint} that the first term in \eqref{KKT_residuals} 
		\begin{equation} \label{ineq_acc_nabdaF}
			\begin{aligned}
				\|\nabla f(X^{k+1}) - \nabla f (Y^{k})\|_{F} & \le L_{f}\|X^{k+1}-Y^{k}\|_{F} \\ 
				&\overset{(a)}{\leq} L_{f} \|X^{k+1}-X^{k}\|_{F} + L_{f}\bar{\alpha}\|X^{k}-X^{k-1}\|_{F},
			\end{aligned}
		\end{equation}
		where $(a)$ holds due to the triangle inequality and \eqref{alpha_update}. Similarly, we have from the second term in \eqref{KKT_residuals} that
		\begin{equation} \label{ineq_acc_nabdaF1}
			\begin{aligned}
				\Vert \beta [(X^{k+1}-Y^{k}) + (X^{k+1} - X^{k})] \Vert_{F} &\leq 2\beta \Vert X^{k+1} - X^{k} \Vert_{F} + \beta\bar{\alpha}\Vert X^k - X^{k-1}\Vert_{F}.
			\end{aligned}
		\end{equation}
		As for the third term in \eqref{KKT_residuals}, we have
		\begin{equation}\label{eq.usv}
			\begin{aligned} 
				&\quad \ \| U^{k+1}{\rm diag}((\bar{\bm w}^{k+1} - \bm{w}^{k}) \circ\bm{\xi}^{k+1}) {V^{k+1}}^{\top}\|_F \\
				&=  \| {\rm diag}((\bar{\bm w}^{k+1} - \bm{w}^{k}) \circ\bm{\xi}^{k+1})  \|_F \\
				&\overset{(a)}{\leq}  \|  \bar{\bm w}^{k+1} - \bm{w}^{k}\|_2 \leq \|  \bar{\bm w}^{k+1} - \bm{w}^{k}\|_1\\ 
				&\overset{(b)}{=} \sum\limits_{i=1}^{\vert \mathcal{I}(X^{k+1})\vert}p (1-p) \left( \hat{\sigma}_i(X^k) \right)^{p-2}  \left(\sigma_i(X^{k+1})- (\sigma_i(X^{k}) + \epsilon_{i}^{k}) \right) \\
				&\overset{(c)}{\leq} \sum\limits_{i=1}^{\vert \mathcal{I}(X^{k+1})\vert}p (1-p) \left(\hat{\sigma}_i(X^k)\right)^{p-2}  \left(\left|\sigma_i(X^k)-\sigma_i(X^{k+1}) \right| + \vert \epsilon_{i}^{k}\vert \right)\\
				&\overset{(d)}{\leq}  \sum\limits_{i=1}^{\vert \mathcal{I}(X^{k+1})\vert}p(1-p) \left(\hat{\sigma}_{\vert \mathcal{I}(X^{k+1})\vert}(X^{k})\right)^{p-2}  \left(\| X^k-X^{k+1} \|_F + \epsilon_{i}^{k} \right), 
			\end{aligned} 
		\end{equation} 
		where $(a)$ holds due to $\xi_{i}^{k+1} \in [-1,1], \forall i\in[m]$, equality $(b)$ makes use of the mean value theorem with $\hat{\sigma}_i(X^{k}) $ lying between $\sigma_i(X^k)+ \epsilon_{i}^k$ with $\sigma_i(X^{k+1})$ for each $i\in [m]$, inequality $(c)$ is true by triangle inequality and inequality $(d)$ is true because of the monotonicity of $(\cdot)^{p-2}$ over $\mathbb{R}_{++}$ and conclusions drawn by \cite[Problem 7.3]{horn2012matrix}. 
		
		Therefore, combining \eqref{ineq_acc_nabdaF}, \eqref{ineq_acc_nabdaF1} with \eqref{eq.usv} leads to 
		\begin{equation}
			\begin{aligned}
				\Vert E^{k+1} \Vert_{F} &\leq \|\nabla f(X^{k+1}) - \nabla f (Y^{k})\|_{F}+ \Vert \beta [(X^{k+1}-Y^{k}) + (X^{k+1} - X^{k})] \Vert_{F}\\
				&\quad +\| U^{k+1}{\rm diag}((\bar{\bm w}^{k+1} - \bm{w}^{k}) \circ\bm{\xi}^{k+1}) {V^{k+1}}^{\top}\|_F\\
				&\overset{(a)}{\leq} (L_{f}+2\beta)\|X^{k+1}-X^{k}\|_{F} + (L_{f}+\beta)\bar{\alpha}\|X^{k}-X^{k-1}\|_{F}\\
				&\quad + \sum\limits_{i=1}^{\vert \mathcal{I}(X^{k+1})\vert}p(1-p) \left( C_3\right)^{p-2}  \left(\| X^k-X^{k+1} \|_F + \epsilon_{i}^{k} \right)\\
				&\overset{(b)}{\leq} (L_{f}+2\beta+mp(1-p)(C_3)^{p-2})\|X^{k+1}-X^{k}\|_{F} + (L_{f}+\beta)\bar{\alpha}\|X^{k}-X^{k-1}\|_{F}\\
				&\quad + (mp(1-p)(C_3)^{p-2})\epsilon_{1}^0,
			\end{aligned}
		\end{equation}
		where $(a)$ holds since $0 < C_3 \leq \min_{i \in [\vert \mathcal{I}(X^{k+1})\vert]} \sigma_{i}(X^{k+1}),\forall k\in \mathbb{N}$ and $(b)$ is true because $\vert \mathcal{I}(X^k)\vert \leq m, \forall k \in \mathbb{N}$ and $\bm{\epsilon}^{0} \in \mathbb{R}^{m}_{\downarrow}\cap\mathbb{R}^{m}_{++}$. This completes the proof.
	\end{proof}
\end{theorem}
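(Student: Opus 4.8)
The plan is to prove the two assertions separately, treating the membership $E^{k+1}\in\partial F(X^{k+1})$ first and the quantitative bound second. For the membership, since $f$ is $C^{1}$ under \Cref{ASM_fLip}, I would invoke the sum rule for the limiting subdifferential to write $\partial F(X^{k+1})=\nabla f(X^{k+1})+\lambda\,\partial\|X^{k+1}\|_{p}^{p}$, and then appeal to \Cref{Prop_Psubdifferential} to describe $\partial\|X^{k+1}\|_{p}^{p}$ as the set of matrices $U^{k+1}\mathrm{diag}(\bm{\vartheta}\circ\bm{\xi}){V^{k+1}}^{\top}$ with $\vartheta_{i}=p\sigma_{i}^{p-1}$ pinned on $\mathcal{I}(X^{k+1})$ and $\bm{\xi}\in\partial|\bm{\sigma}(X^{k+1})|$. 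The remaining task is to certify that the specific vector $\bar{\bm{w}}^{k+1}\circ\bm{\xi}^{k+1}$ lies in this set: on $\mathcal{I}(X^{k+1})$ the singular values are strictly positive, so $\xi_{i}^{k+1}=1$ and $\bar{w}_{i}^{k+1}\xi_{i}^{k+1}=p\sigma_{i}(X^{k+1})^{p-1}$ matches exactly, while on $\mathcal{Z}(X^{k+1})$ the subdifferential formula imposes no constraint on the corresponding entry, so the finite value $w_{i}^{k}\xi_{i}^{k+1}$ is admissible. This yields $E^{k+1}\in\partial F(X^{k+1})$.

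For the bound, I would start from the subproblem optimality identity \eqref{KKT_acc_opt}, which holds at $X^{k+1}$ with the \emph{same} orthogonal factors $U^{k+1},V^{k+1}$ and the weights $\bm{w}^{k}$; here \Cref{prop_opt} (the simultaneous ordered SVD of $X^{k+1}$ and the prox argument) is what guarantees the diagonal terms can be compared coordinatewise. Subtracting \eqref{KKT_acc_opt} from the definition of $E^{k+1}$ cancels the purely $X^{k+1}$-dependent pieces and leaves three terms: the gradient difference $\nabla f(X^{k+1})-\nabla f(Y^{k})$, the proximal term $-\beta[(X^{k+1}-Y^{k})+(X^{k+1}-X^{k})]$, and a reweighting term $\lambda U^{k+1}\mathrm{diag}((\bar{\bm{w}}^{k+1}-\bm{w}^{k})\circ\bm{\xi}^{k+1}){V^{k+1}}^{\top}$. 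The first two are routine: $L_{f}$-smoothness together with the extrapolation identity $Y^{k}=X^{k}+\alpha_{k}(X^{k}-X^{k-1})$ (with $\alpha_{k}\le\bar{\alpha}$) and the triangle inequality bound them by $L_{f}\|X^{k+1}-X^{k}\|_{F}+L_{f}\bar{\alpha}\|X^{k}-X^{k-1}\|_{F}$ and by $2\beta\|X^{k+1}-X^{k}\|_{F}+\beta\bar{\alpha}\|X^{k}-X^{k-1}\|_{F}$, respectively.

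The crux, and what I expect to be the main obstacle, is controlling the reweighting term. By orthogonal invariance of $\|\cdot\|_{F}$ its norm equals $\|\mathrm{diag}((\bar{\bm{w}}^{k+1}-\bm{w}^{k})\circ\bm{\xi}^{k+1})\|_{F}\le\|\bar{\bm{w}}^{k+1}-\bm{w}^{k}\|_{2}\le\|\bar{\bm{w}}^{k+1}-\bm{w}^{k}\|_{1}$, using $|\xi_{i}^{k+1}|\le1$. The decisive structural observation is that on $\mathcal{Z}(X^{k+1})$ we have defined $\bar{w}_{i}^{k+1}=w_{i}^{k}$, so those coordinates vanish and the $\ell_{1}$ sum runs only over $i\in\mathcal{I}(X^{k+1})$. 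On that index set I would apply the mean value theorem to $t\mapsto p\,t^{p-1}$ between $\sigma_{i}(X^{k+1})$ and $\sigma_{i}(X^{k})+\epsilon_{i}^{k}$, extracting a factor $p(1-p)\hat{\sigma}_{i}^{p-2}$ multiplying $|(\sigma_{i}(X^{k})+\epsilon_{i}^{k})-\sigma_{i}(X^{k+1})|\le|\sigma_{i}(X^{k})-\sigma_{i}(X^{k+1})|+\epsilon_{i}^{k}$. The danger is that $(\cdot)^{p-2}$ blows up near the origin; this is precisely where \Cref{THM_away_zero}(iii) is indispensable, since it furnishes a uniform lower bound $C_{3}>0$ on the active singular values (hence on $\hat{\sigma}_{i}$), giving $\hat{\sigma}_{i}^{p-2}\le C_{3}^{p-2}$. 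Combining this with the singular-value perturbation inequality $|\sigma_{i}(X^{k})-\sigma_{i}(X^{k+1})|\le\|X^{k}-X^{k+1}\|_{F}$ from \cite[Problem 7.3]{horn2012matrix}, the cardinality bound $|\mathcal{I}(X^{k+1})|\le m$, and $\epsilon_{i}^{k}\le\epsilon_{1}^{0}$, the sum is controlled by $mp(1-p)C_{3}^{p-2}(\|X^{k+1}-X^{k}\|_{F}+\epsilon_{1}^{0})=C_{2}(\|X^{k+1}-X^{k}\|_{F}+\epsilon_{1}^{0})$. Adding the three estimates then produces the claimed inequality, the non-Lipschitz behavior of $(\cdot)^{p-1}$ at $0$ being tamed only through the earlier rank-identification result.
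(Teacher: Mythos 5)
Your proposal is correct and follows essentially the same route as the paper's proof: the same subtraction of the subproblem optimality condition \eqref{KKT_acc_opt} to obtain the three-term decomposition, the same smoothness/extrapolation bounds on the first two terms, and the same treatment of the reweighting term via $|\xi_i^{k+1}|\le 1$, cancellation on $\mathcal{Z}(X^{k+1})$, the mean value theorem, the singular-value perturbation bound from \cite[Problem 7.3]{horn2012matrix}, and the uniform lower bound $C_3$ coming from the rank-identification result. If anything, you are slightly more explicit than the paper in verifying coordinatewise that $\bar{\bm{w}}^{k+1}\circ\bm{\xi}^{k+1}$ is admissible in \Cref{Prop_Psubdifferential} and in citing \Cref{THM_away_zero}(iii) as the source of $C_3$, but these are elaborations of the same argument, not a different one.
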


Based on the previous analysis, we now demonstrate the global convergence properties of the EIRNRI algorithm. Before proceeding, we use  $\chi^{\infty} $ to denote the cluster point of $\{X^{k}\}$ generated by \Cref{alg_acc}.

\begin{theorem}\label{Global_sub}
	Suppose Assumptions \ref{ASM_fLip}-\ref{ASM_levelBoundness} hold true. Let $\{X^{k}\} $ be the sequence generated by \Cref{alg_acc}. Then the following assertions hold.
	\begin{itemize}\label{Theorem:Global}
		\item[(i)] The set of cluster points $\chi^{\infty}$  is nonempty, compact and connected set. 
		\item[(ii)] $\{F(X^k)\}$ is convergent. Moreover, the objective function $F$ is constant on $\chi^{\infty}$.
		\item[(iii)] $\lim\limits_{k\to+\infty}\|E^{k+1} \|_F =  0$. Therefore, any cluster point $X^{*}\in\chi^{\infty}$ is a critical point of $F$, meaning $\chi^{\infty} \subset \textrm{crit}(F)$.
	\end{itemize}
\end{theorem}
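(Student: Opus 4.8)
The plan is to dispatch the three assertions in order, relying on the sufficient-decrease estimates of \Cref{lem_Dec_acc}, the rank-identification conclusions of \Cref{THM_away_zero} and \Cref{thm.update}, and the subgradient bound of \Cref{cvgc_Global_ACC_sub}. For (i), boundedness of $\{X^{k}\}$ from \Cref{lem_Dec_acc}(iii) immediately makes $\chi^{\infty}$ nonempty; since the cluster-point set of any sequence is an intersection of closures of tails it is automatically closed, and being contained in a bounded set it is compact. Connectedness follows from the asymptotic regularity $\|X^{k+1}-X^{k}\|_{F}\to 0$ established in \Cref{lem_Dec_acc}(ii), via the standard Ostrowski-type argument (as used in \cite{bolte2014proximal}) that a bounded sequence whose successive differences vanish has a connected set of limit points.

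For (ii), I would not work directly with $F$ but route through the auxiliary function $H$, whose limit exists by \Cref{lem_Dec_acc}(i). Writing $H(X^{k},X^{k-1},\bm{\epsilon}^{k}) = F(X^{k}) + \tfrac{\beta}{2}\|X^{k}-X^{k-1}\|_{F}^{2} + \lambda\sum_{i=1}^{m}\big[(\sigma_{i}(X^{k})+\epsilon_{i}^{k})^{p}-\sigma_{i}(X^{k})^{p}\big]$, I track the perturbation discrepancy index by index once the rank has stabilized. For $i\in\Ical^{*}$ the singular values are bounded away from $0$ by \Cref{THM_away_zero}(iii) and $\epsilon_{i}^{k}\to 0$ by \Cref{thm.update}(ii), so each such bracketed term vanishes; for $i\in\mathcal{Z}^{*}$ one has $\sigma_{i}(X^{k})=0$ while $\epsilon_{i}^{k}$ is eventually frozen at a positive constant by \Cref{thm.update}(iii), contributing a fixed quantity $\lambda\sum_{i\in\mathcal{Z}^{*}}(\epsilon_{i}^{\hat{k}})^{p}$. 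Together with $\|X^{k}-X^{k-1}\|_{F}^{2}\to 0$, this shows $F(X^{k})$ differs from $H(X^{k},X^{k-1},\bm{\epsilon}^{k})$ by a term converging to this constant, whence $\{F(X^{k})\}$ converges. Constancy of $F$ on $\chi^{\infty}$ then follows from the continuity of $F$ (each $\sigma_{i}$ is Lipschitz in $X$ and $t\mapsto t^{p}$ is continuous on $[0,\infty)$): along any subsequence $X^{k_{j}}\to X^{*}$ we get $F(X^{k_{j}})\to F(X^{*})$, which must equal the common limit.

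For (iii), the crucial point is to invoke the \emph{sharper} intermediate inequality inside the proof of \Cref{cvgc_Global_ACC_sub}, in which the perturbation contribution is $\sum_{i\in\Ical(X^{k+1})}p(1-p)C_{3}^{p-2}\big(\|X^{k+1}-X^{k}\|_{F}+\epsilon_{i}^{k}\big)$, rather than the final displayed bound where $\epsilon_{i}^{k}$ has been coarsely majorized by the nonvanishing constant $\epsilon_{1}^{0}$. After rank identification the summation index set is exactly $\Ical^{*}$, and for those indices $\epsilon_{i}^{k}\to 0$ by \Cref{thm.update}(ii); combined with $\|X^{k+1}-X^{k}\|_{F}\to 0$ and $\|X^{k}-X^{k-1}\|_{F}\to 0$ from \Cref{lem_Dec_acc}(ii), every term on the right collapses and $\|E^{k+1}\|_{F}\to 0$. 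To conclude criticality, fix a cluster point $X^{*}$ with $X^{k_{j}}\to X^{*}$; asymptotic regularity gives $X^{k_{j}+1}\to X^{*}$ as well, continuity of $F$ gives $F(X^{k_{j}+1})\to F(X^{*})$, and since $E^{k_{j}+1}\in\partial F(X^{k_{j}+1})$ with $E^{k_{j}+1}\to 0$, the closedness of the graph of the limiting subdifferential under $F$-attentive convergence yields $\bm{0}\in\partial F(X^{*})$, i.e. $X^{*}\in\mathrm{crit}(F)$ and $\chi^{\infty}\subset\mathrm{crit}(F)$.

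The main obstacle is precisely this last step: the bound as stated in \Cref{cvgc_Global_ACC_sub} carries the residual constant $C_{2}\epsilon_{1}^{0}$, so a naive appeal to the final estimate cannot deliver $\|E^{k+1}\|_{F}\to 0$. The proof must instead exploit that rank identification pins the relevant sum to the fixed index set $\Ical^{*}$, on which the perturbations genuinely decay to zero; this is exactly where the machinery of \Cref{thm.update}(ii) is indispensable. A secondary subtlety is securing $F$-attentive (not merely ordinary) convergence when invoking subdifferential closedness, which is handled by the continuity of $F$ noted in the treatment of (ii).
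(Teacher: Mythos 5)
Your proposal is correct and follows essentially the same route as the paper's own proof: Ostrowski's result for (i), passing through the auxiliary function $H$ and splitting the perturbation discrepancy over $\Ical^{*}$ and $\mathcal{Z}^{*}$ for (ii), and, for (iii), invoking the sharper intermediate estimate \eqref{eq.usv} together with \Cref{THM_away_zero}(iii) and \Cref{thm.update}(ii) rather than the final bound carrying the nonvanishing constant $C_2\epsilon_1^{0}$ --- which is precisely how the paper argues. Your added remarks on $F$-attentive convergence and subdifferential graph closedness merely make explicit a step the paper leaves terse, so no substantive difference remains.
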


\begin{proof}
	(i) Lemma \ref{lem_Dec_acc}(iii) implies $\chi^{\infty}$ is nonempty. On the other hand, Lemma \ref{lem_Dec_acc}(ii)-(iii), combined with the classical Ostrowski result \cite[Theorem 26.1]{ostrowski1973solution}, leads to the desired results. This proves statement (i).
	
	(ii) The convergence of $\{F(X^k)\}$ is direct consequence of \eqref{pro_prox} by invoking \Cref{thm.update}(ii) and Lemma \ref{lem_Dec_acc}(ii). On the other hand, it holds from \Cref{Theo:RankIdentification} and Lemma \ref{lem_Dec_acc}(i)-(iii) 
	that for each $X^{*} \in \chi^{\infty}$ with $\Rank(X^*)=r^{*}$,
	\begin{equation*}
		\begin{aligned}
			F(X^{*}) 
			=&\ f (X^{*}) + \lambda\sum\limits_{i=1}^{r^{*}}(\sigma_i(X^*))^{p} \\
			\overset{(a)}{=}&\ \lim\limits_{k\to+\infty } \left[ f(X^{k+1}) + \lambda\sum\limits_{i=1}^{m}(\sigma_i(X^{k+1}) +\epsilon_{i}^{k+1})^{p} + \beta\|X^{k+1}-X^{k}\|_{F}^{2}\right] - \lambda \sum\limits_{i=r^{*} +1}^{m} (\epsilon_{i}^{\hat{k}})^{p}  \\
			=&\ \lim\limits_{k\to+\infty  } H(X^{k+1},X^{k},\epsilon^{k+1})  - \lambda \sum\limits_{i=r^{*} +1}^{m} (\epsilon_{i}^{\hat{k}})^{p} \\
			\overset{(b)}{=}&\ H^*  - \lambda \sum\limits_{i=r^{*} +1}^{m} (\epsilon_{i}^{\hat{k}})^{p},
		\end{aligned}
	\end{equation*} 
	where $(a)$ is true because \Cref{thm.update} guarantees that there exists $\hat{k} \in \mathbb{N}$ such that $\bm{\epsilon}^k\to \bm{\epsilon}^* = [0,\cdots,0,\epsilon_{r^{*}+1}^{\hat{k}},\cdots,\epsilon_{m}^{\hat{k}}]^{\top}$ as $k\to+\infty$, and also by Lemma \ref{lem_Dec_acc}(ii), and $(c)$ holds simply by Lemma \ref{lem_Dec_acc}(i). 
	
	(iii) 	Recall \eqref{ineq_acc_nabdaF} and \eqref{ineq_acc_nabdaF1}, we can deduce from Lemma \ref{lem_Dec_acc}(iii) that 
	\begin{equation}\label{eq_1st_term_vanish}
		\lim\limits_{k\to+\infty}\|\nabla f(X^{k+1}) - \nabla f (Y^{k})\|_{F}=0
	\end{equation}
	and
	\begin{equation}\label{eq_2nd_term_vanish}
		\lim\limits_{k\to+\infty} \Vert \beta [(X^{k+1}-Y^{k}) + (X^{k+1} - X^{k})] \Vert_{F} = 0.
	\end{equation}
	On the other hand, by Proposition \ref{THM_away_zero}(iii) and monotonicity of $(\cdot)^{p-2}$ over $\mathbb{R}_{++}$, we have from \eqref{eq.usv} that for sufficiently large $k \in \mathbb{N}$
	\begin{equation}\label{eq_3rd_term_vanish0}
		\begin{aligned}
			&\quad \ \| U^{k+1}{\rm diag}((\bar{\bm w}^{k+1} - \bm{w}^{k}) \circ\bm{\xi}^{k+1}) {V^{k+1}}^{\top}\|_F^2 \\
			&\leq \sum\limits_{i=1}^{\vert \mathcal{I}(X^{k+1})\vert}p(1-p) \left(\hat{\sigma}_{\vert \mathcal{I}(X^{k+1})\vert}(X^{k})\right)^{p-2}  \left(\| X^k-X^{k+1} \|_F + \epsilon_{i}^{k} \right)\\
			& <  \sum\limits_{i=1}^{\vert \mathcal{I}(X^{k+1})\vert}p(1-p) \left( \left(\frac{\lambda p}{2\beta C_1}\right)^{1/(1-p)}-\epsilon_{i}^k\right)^{p-2}  \left(\| X^k-X^{k+1} \|_F + \epsilon_{i}^{k} \right).
		\end{aligned}
	\end{equation}
	By \Cref{thm.update}(ii) and Lemma \ref{lem_Dec_acc}(iii), we know from \eqref{eq_3rd_term_vanish0} that
	\begin{equation}\label{eq_3rd_term_vanish1}
		\lim\limits_{k\to+\infty} \| U^{k+1}{\rm diag}((\bar{\bm w}^{k+1} - \bm{w}^{k}) \circ\bm{\xi}^{k+1}) {V^{k+1}}^{\top}\|_F^2 = 0.
	\end{equation}
	Combining \eqref{eq_1st_term_vanish}, \eqref{eq_2nd_term_vanish} and \eqref{eq_3rd_term_vanish1} yields
	\begin{equation*}
		\lim\limits_{k\to+\infty}\|E^{k+1} \|_F =  0,
	\end{equation*}
	meaning $\bm{0} \in \partial F(X^{*})$ for any $X^{*} \in \chi^{\infty}$, as desired. This completes the proof.
\end{proof}

\section{Convergence Analysis Under KL Property}

In this section, we analyze the convergence properties of the sequence $\{ (X^{k}, Y^k, \bm{\epsilon}^{k})\} $ generated by \Cref{alg_acc} under the KL property of $F$ for sufficiently large $k$. We first define a reduced form of \eqref{pro_prox} as 
\begin{equation}\label{pro_reduce_acc}
	\hat{H} (X,Y,\bm{\delta}) = f (X) +\sum\limits_{i=1}^{r^{*}} \left(\sigma_{i} (X)+ (\delta_{i})^{2}\right)^{p} + \frac{\beta}{2}\|X-Y\|_{F}^{2}, 
\end{equation}
where   $\epsilon_{i} = \delta_{i}^{2}$ with $\delta_{i}\ge0$ since $\epsilon_{i} $ is restricted to be non-negative. 
Notice that by \Cref{thm.update}, $\delta_i^k \equiv \delta_i^{\hat k}, i\in\mathcal{Z}^*$ and $\delta_i^{k+1} = \sqrt{\mu}\delta_i^k, i\in \Ical^*$ for all sufficiently large $k$.  
We consider the Cartesian product of triplets $(X,Y,\bm{z}) \in \mathbb{R}^{m\times n} \times \mathbb{R}^{m\times n}\times \mathbb{R}^{r^{*}}$.
\begin{definition}\label{def_product} 
	Consider a set $\mathbb{S} = \{(X,Y,\bm{z}) \mid (X,Y,\bm{z}) \in \mathbb{R}^{m\times n} \times \mathbb{R}^{m\times n}\times \mathbb{R}^{r^{*}}\}$. Define the Cartesian product of any $\bm{X} = (X_1, X_2, \bm{x_3}) \in \mathbb{S}$ and $\bm{Y} = (Y_1, Y_2, \bm{y_3}) \in \mathbb{S}$ as 
	\begin{equation*}
		\bm{X} \times \bm{Y} = \langle X_{1},Y_{1} \rangle + \langle X_{2},Y_{2} \rangle + \langle \bm{x_{3}}, \bm{y_{3}} \rangle.
	\end{equation*}
	The norm of any $\bm{X}\in\mathbb{S} $ is defined as 
	\begin{equation*} 
		{\left\|X\right\|}  =  ({\|X_{1}\|}^{2}_{F} +{\|X_{2}\|}^{2}_{F}+ {\|\bm{x_{3}}\|}^{2}_{2})^{1/2}, 
	\end{equation*}
	and thus the distance between $\bm{X}\in\mathbb{S}$ and $\bm{Y}\in\mathbb{S}$ is 
	\begin{equation*} 
		\mathrm{dist} \left(X,Y\right)  = \sqrt{{\|X_{1}-Y_{1}\|}^{2}_{F} +{\|X_{2}-Y_{2}\|}^{2}_{F}+ {\|\bm{x_{3}}-\bm{y_{3}}\|}^{2}_{2} }.
	\end{equation*}
\end{definition}

We assume that the uniform KL property holds for $\hat H$ in this Cartesian product space. 

\begin{assumption}\label{ASM_KL_acc}
	Suppose  $\hat{H}:\mathbb{S}\to \mathbb{R} $ satisfies the uniform KL property on
	$\Omega := \{(X^{*},X^{*},\bm{0}_{r^{*}})\mid X^{*}\in\mathrm{crit} (F)\}$.
\end{assumption}
\begin{remark}
	The assumption that $\hat{H}$ satisfies uniform KL property at any point of $\Omega$ is indeed no stronger than the assumption that it satisfies the KL property.  Note that the compactness of $\chi^{\infty}$ by \Cref{Theorem:Global}(i) implies that $\Omega$ is compact, and additionally, \Cref{thm.update} and \Cref{Theorem:Global}(ii) indicates that $\hat{H}$ is constant on $\Omega$. Furthermore, the KL property is generally satisfied across a broad range of function classes \cite{attouch2010proximal}. Therefore, \Cref{ASM_KL_acc} can be considered reasonable and mild.
\end{remark}

Now we prove the convergence properties of $\{(X^k, Y^k, \bm{\delta}^k)\}$ using KL property.

\begin{lemma}[Uniqueness of convergence properties under KL condition] \label{lem_KLacc_cvgc_cond} 
	Suppose Assumptions \ref{ASM_fLip}-\ref{ASM_KL_acc} hold true. 
	Let $\{X^{k}\}$ be the sequence generated by \Cref{alg_acc}. There then exists $\hat{k} \in \mathbb{N}$ such that the following statements hold.
	\begin{itemize}
		\item[(i)] There exists $D>0 $ such that for all $k\ge\hat{k} $
		\begin{equation*}
			\left\| \nabla \hat{H} (X^{k+1},X^{k},\bm{\delta}^{k+1} )\right\|_{F}  
			\le D  \left(\left\|X^{k}-X^{k+1}\right\|_{F}+\left\|X^{k-1}-X^{k}\right\|_{F} + \|\bm{\delta}^{k}\|_{1}-\|\bm{\delta}^{k+1}\|_{1} \right). 
		\end{equation*}
		Moreover, $\lim\limits_{k\to+\infty} \Vert \nabla \hat{H} (X^{k+1},X^{k},\bm{\delta}^{k+1})\Vert_F = 0$.
		\item[(ii)] $\{\hat{H} (X^{k},Y^{k-1},\bm{\delta}^{k})\} $ is monotonically decreasing and there exists $D_{1} $ such that 
		\begin{equation*}
			\hat{H} (X^{k},Y^{k-1},\bm{\delta}^{k} ) - \hat{H} (X^{k+1},Y^{k},\bm{\delta}^{k+1} ) \ge D_{1} \|X^{k}-X^{k-1}\|_{F}^{2}.
		\end{equation*}
		\item[(iii)] $\hat{H} (X^{*},X^{*},\bm{0}) =\zeta :=\lim\limits_{k\to+\infty} \hat{H} (X^{k},Y^{k-1},\bm{\delta}^{k} ) $,  where $ (X^{*},X^{*},\bm{0})\in\Gamma $ with $\Gamma $ being the set of cluster points of $\{ (X^{k},Y^{k-1},\bm{\delta}^{k})\}$, that is, $\Gamma:=\left\{ (X^{*},X^{*},\bm{0}): X^{*}\in\chi^{\infty} \right\} $.  
		\item[(iv)] For any $t$,  $T^t := \sum\limits_{k=t}^{+\infty}\|X^{k-1}-X^{k}\|_{F} < +\infty $.  Therefore,  $\lim\limits_{k\to+\infty} X^k = X^*$. 
	\end{itemize}
\end{lemma}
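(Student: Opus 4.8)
The plan is to follow the standard Attouch--Bolte--Svaiter recipe for descent methods obeying a KL inequality, but executed on the \emph{reduced} smooth surrogate $\hat{H}$ rather than on $H$. The enabling fact throughout is rank identification: by \Cref{Theo:RankIdentification} and \Cref{thm.update}, there is $\hat{k}$ so that for every $k\ge\hat{k}$ we have $\sigma_i(X^k)=0$ for $i\in\mathcal{Z}^*$, the tail $\{\epsilon_i^k\}_{i\in\mathcal{Z}^*}$ is frozen, the $\sigma_i(X^k)$ are bounded away from $0$ for $i\in\Ical^*$, and $\delta_i^{k+1}=\sqrt{\mu}\,\delta_i^k$. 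Hence each argument $\sigma_i(X)+\delta_i^2$ in $\hat{H}$ stays bounded away from $0$ along the tail, so $\hat{H}$ is $C^1$ on a neighborhood of every tail iterate and one may legitimately write $\nabla\hat{H}$ in (i). I would prove (i)--(iii) as the three ingredients and then deduce (iv).

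For (i), I would compute $\nabla\hat{H}(X^{k+1},X^k,\bm{\delta}^{k+1})$ block by block and compare with the subproblem optimality condition \eqref{KKT_acc_opt}. The $X$-block is $\nabla f(X^{k+1})+\lambda U^{k+1}\mathrm{diag}(\bar{\bm{w}}^{k+1}){V^{k+1}}^{\top}+\beta(X^{k+1}-X^k)$; subtracting \eqref{KKT_acc_opt} cancels the $\beta(X^{k+1}-X^k)$ term and leaves $[\nabla f(X^{k+1})-\nabla f(Y^k)]-\beta(X^{k+1}-Y^k)$ together with a weight-difference term, all bounded exactly as in \Cref{cvgc_Global_ACC_sub} via $L_f$-smoothness, \eqref{eq: extrapolatedPoint}, and a mean-value estimate, yielding $O(\|X^{k+1}-X^k\|_F+\|\bm{\delta}^k\|_1-\|\bm{\delta}^{k+1}\|_1)$. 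The $Y$-block is $-\beta(X^{k+1}-X^k)$, and the $\bm{\delta}$-block has entries $2p\delta_i^{k+1}(\sigma_i(X^{k+1})+(\delta_i^{k+1})^2)^{p-1}$, bounded by $C\|\bm{\delta}^{k+1}\|_1$ since the $\sigma_i$ stay away from $0$; as $\delta_i^{k+1}=\sqrt{\mu}\,\delta_i^k$ gives $\|\bm{\delta}^{k+1}\|_1=\tfrac{\sqrt{\mu}}{1-\sqrt{\mu}}(\|\bm{\delta}^k\|_1-\|\bm{\delta}^{k+1}\|_1)$, this too is absorbed into the claimed bound, and $\nabla\hat{H}\to0$ follows from \Cref{lem_Dec_acc}(ii) and $\bm{\delta}^k\to\bm{0}$. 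For (ii), I would rerun the sufficient-decrease computation \eqref{eq:UsefulIneq2}--\eqref{acc_dec_H} of \Cref{lem_Dec_acc}(i) on the reduced objective: after $\hat{k}$ the frozen tail $\sum_{i\in\mathcal{Z}^*}(\epsilon_i^{\hat{k}})^p$ is an additive constant, so the estimate transfers and gives $D_1=\tfrac{\beta}{2}(1-\bar{\alpha}^2\tfrac{3L_f+\beta}{\beta})>0$.

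For (iii), I would pass to the limit using continuity of $f$ and of the singular-value map together with $\|X^{k+1}-X^k\|_F\to0$ (so $\|X^k-Y^{k-1}\|_F\to0$) and $\bm{\delta}^k\to\bm{0}$; \Cref{lem_Dec_acc}(i) and \Cref{Global_sub}(ii) give convergence of $\{\hat{H}(X^k,Y^{k-1},\bm{\delta}^k)\}$, and along any subsequence $X^{k_j}\to X^*$ the limit equals $f(X^*)+\sum_{i\le r^*}\sigma_i(X^*)^p=\hat{H}(X^*,X^*,\bm{0})$, so $\hat{H}\equiv\zeta$ on $\Gamma$, exactly what is needed to invoke \Cref{ASM_KL_acc} on $\Omega$.

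The heart of the argument, and the step I expect to be the main obstacle, is (iv). Writing $\hat{H}^k:=\hat{H}(X^k,Y^{k-1},\bm{\delta}^k)$ and $g_k:=\|\bm{\delta}^k\|_1-\|\bm{\delta}^{k+1}\|_1\ge0$, note that $\mathrm{dist}((X^k,Y^{k-1},\bm{\delta}^k),\Omega)\to0$ and $\hat{H}^k\downarrow\zeta$, so for large $k$ the triple lies in the uniform-KL neighborhood; handling separately the trivial case $\hat{H}^k=\zeta$ (where monotonicity and (ii) force $X^k=X^{k-1}$, giving finite length at once), concavity of $\Phi$ yields
\[
\Phi(\hat{H}^k-\zeta)-\Phi(\hat{H}^{k+1}-\zeta)\ \ge\ \frac{\hat{H}^k-\hat{H}^{k+1}}{\|\nabla\hat{H}^k\|_F}\ \ge\ \frac{D_1\|X^k-X^{k-1}\|_F^2}{D\,(\|X^{k+1}-X^k\|_F+\|X^k-X^{k-1}\|_F+g_{k-1})}.
\]
Applying $2\sqrt{ab}\le a+b$ then produces a recursion of the schematic form $\|X^k-X^{k-1}\|_F\le\tfrac12(\|X^{k+1}-X^k\|_F+\|X^k-X^{k-1}\|_F)+c_1\big(\Phi(\hat{H}^k-\zeta)-\Phi(\hat{H}^{k+1}-\zeta)\big)+c_2 g_{k-1}$, and summing over $k\ge t$ makes the $\Phi$-differences telescope (bounded since $\Phi\ge0$) and the $\bm{\delta}$-gaps telescope to $c_2\|\bm{\delta}^{t-1}\|_1$ (since $\|\bm{\delta}^k\|_1\downarrow0$), giving $\sum_{k\ge t}\|X^k-X^{k-1}\|_F<+\infty$. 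Finite total length forces $\{X^k\}$ to be Cauchy, hence $X^k\to X^*$. The genuine difficulty is twofold: reconciling the mismatch between the proximal slot $Y^{k-1}$ appearing in the Lyapunov value $\hat{H}^k$ and the slot $X^k$ appearing in the gradient bound of (i), which injects extra $O(\|X^{k-1}-X^{k-2}\|_F)$ terms that must be shown to remain summable, and managing the additional decreasing perturbation channel, which turns the classical two-iterate recursion into a three-term one; verifying that both the extrapolation gaps and the $\bm{\delta}$-gaps preserve the telescoping is the crux.
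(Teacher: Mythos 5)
Your proposal is correct and follows essentially the same route as the paper's own proof: the block-wise computation of $\nabla\hat{H}$ compared against the subproblem optimality condition \eqref{KKT_acc_opt} with mean-value and geometric-decay bounds for the weight and $\bm{\delta}$ blocks, the transfer of the sufficient-decrease constant from \Cref{lem_Dec_acc}(i), and the uniform-KL/concavity/AM--GM telescoping recursion yielding finite length and the Cauchy property. The only substantive difference is that you explicitly flag the mismatch between the $Y^{k-1}$ slot in the lemma statement and the $X^{k-1}$ slot needed to apply the gradient bound of part (i); the paper silently resolves this by carrying out the entire argument in \eqref{eq:onebound}--\eqref{eq_upbound_accsum} with $X$-slots (for which your extra $O(\|X^{k-1}-X^{k-2}\|_{F})$ correction term is unnecessary, though it would also remain summable and is harmless).
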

\begin{proof}
	Since  $\hat H$ is differentiable with respect to its all input variables $(X,Y,\bm{\delta})$ separately, we have
	\begin{equation}\label{eq:derivative_H}
		\begin{aligned}
			\nabla_{X} \hat{H} (X,Y,\bm{\delta}) &= \nabla f (X)+\beta (X-Y)+\lambda U\textrm{diag}(\hat{\bm w})V^{\top},\\
			\nabla_{Y} \hat{H} (X,Y,\bm{\delta}) &= \beta (X-Y),\\
			\nabla_{\delta_{i}}\hat{H} (X,Y,\bm{\delta}) &= 2\lambda p\delta_{i} (\sigma_{i} (X)+\delta_{i}^{2})^{p-1}, \ \forall i \in [r^{*}],
		\end{aligned}
	\end{equation}
	where $\hat{\bm w} =(p (\sigma_{1} (X)+(\delta_{1})^{2})^{p-1},\cdots,p (\sigma_{r^{*}} (X)+(\delta_{r^{*}})^{2})^{p-1},0,\cdots,0)^{\top} \in \mathbb{R}^{m}$ by \Cref{Prop_Psubdifferential}. 
	
	For each $k > \hat{k}$,  we have from \eqref{KKT_acc_opt} and Proposition \ref{THM_away_zero}(ii)-(iii) that
	\begin{equation}\label{KKT_acc_reform}
		\bm{0}= \nabla f (Y^{k}) + \beta (X^{k+1}-Y^{k}) + \beta(X^{k+1} - X^{k}) + \lambda U^{k+1}\mathrm{diag} \left(\hat{\bm{w}}^{k}\right){V^{k+1}}^{\top},
	\end{equation}
	where $\hat{\bm{w}}^{k} = (p(\sigma_1(X^{k}) + (\delta_1^{k})^{2})^{p-1},\cdots,p(\sigma_{r^{*}}(X^{k}) + (\delta_{r^{*}}^{k})^{2})^{p-1},0,\cdots,0)^{\top}$.
	
	(i) Combining the first expression in \eqref{eq:derivative_H} and \eqref{KKT_acc_reform}, we have
	\begin{equation}\label{partial_hatH}
		\begin{aligned} 
			& \nabla_{X} \hat{H} (X^{k+1},X^{k},\bm{\delta}^{k+1} ) \\
			=& \nabla f (X^{k+1}) - \nabla f(Y^{k}) - \beta (X^{k+1}-Y^{k}) +\lambda U^{k+1} \textrm{diag}\left(\hat{\bm{w}}^{k+1} - \hat{\bm{w}}^{k}\right){V^{k+1}}^{\top}.     
	\end{aligned}\end{equation}
	It follows from \eqref{eq: extrapolatedPoint} that
	\begin{equation}\label{ineq_acc_dxy_APD_APD}
		\|\beta (X^{k}-Y^{k})\|_{F} \le \beta\bar{\alpha}\|X^{k}-X^{k-1}\|_{F}.
	\end{equation}
	For any $k \geq \hat{k}$, we then have
	\begin{equation}\label{dec_fro_err_Left}
		\begin{aligned} 
			&\ \| U^{k+1}{\rm diag}(\hat{\bm{w}}^{k+1} - \hat{\bm{w}}^{k}) {V^{k+1}}^{\top}\|_F\\
			\leq & \  \| {\rm diag}(\hat{\bm{w}}^{k+1} - \hat{\bm{w}}^{k})\|_F = \| \hat{\bm{w}}^{k+1} - \hat{\bm{w}}^{k}\|_2  \leq \| \hat{\bm{w}}^{k+1} - \hat{\bm{w}}^{k} \|_1 \\
			\leq & \sum\limits_{i=1}^{r^{*}}  p (1-p) (\sigma_{i}(X^k))^{p-2}(\vert \sigma_{i}(X^{k})  - \sigma_{i}(X^{k+1})\vert + \vert(\delta_{i}^{k})^2 -  (\delta_{i}^{k+1})^2\vert)\\
			\overset{(a)}{\leq} & \sum\limits_{i=1}^{r^{*}}  p (1-p) \left(\frac{2\beta C_1}{\lambda p}\right)^{\frac{2-p}{1-p}}  \left(\| X^k-X^{k+1} \|_F + \vert (\delta_{i}^{k})^2 - (\delta_i^{k+1})^2\vert \right),\\ 
			\le & D_p  \left(   \| X^k-X^{k+1} \|_F   + \max\limits_{i}  (\delta_{i}^{k}+\delta_{i}^{k+1}) \|\bm{\delta}^{k}-\bm{\delta}^{k+1}\|_{1}\right) \\
			\le &  D_p \left[  \|X^{k}-X^{k+1}\|_{F} + 2\|\bm{\delta}^{0}\|_{\infty}  (\|\bm{\delta}^{k}\|_{1} - \|\bm{\delta}^{k+1}\|_{1}) \right] ,  
	\end{aligned} \end{equation}
	where $D_p = p (1-p) \left(\frac{2\beta C_1}{\lambda p}\right)^{\frac{2-p}{1-p}} $, and inequality $(a)$ holds by Proposition \ref{THM_away_zero}(iii) and conclusions drawn by \cite[Problem 7.3]{horn2012matrix}. This, together with \eqref{partial_hatH}, \eqref{ineq_acc_nabdaF}, \eqref{ineq_acc_dxy_APD_APD} and \eqref{dec_fro_err_Left}, yields
	\begin{equation}\label{eq:first_term}
		\begin{aligned} 
			&\  \| \ \nabla_{X} \hat{H} (X^{k+1},X^{k},\bm{\delta}^{k+1} )\|_F \\
			\le&\ \|\nabla f (X^{k+1})-f (Y^{k})\|_{F} + \beta\|X^{k}-Y^{k}\|_{F} + \lambda \| U^{k+1}{\rm diag}(\hat{\bm{w}}^{k+1} - \hat{\bm{w}}^{k}) {V^{k+1}}^{\top}\|_F \\ 
			\le&\  (\lambda{D_p} +L_{f})\|X^{k+1}-X^{k}\|_{F} +  (L_{f}+\beta)\bar{\alpha}\|X^{k}-X^{k-1}\|_{F} \\
			&\ + 2\lambda{D_p}\|\bm{\delta}^{0}\|_{\infty} (\|\bm{\delta}^{k}\|_{1}-\|\bm{\delta}^{k+1}\|_{1}).      
		\end{aligned}
	\end{equation}
	Similarly,  we have
	\begin{equation}\label{eq:second_term}
		\| \nabla_{Y} \hat{H} (X^{k+1},X^{k},\bm{\delta}^{k+1} ) \|_{F} = \|\beta(X^{k+1} - X^{k})\|_F \le \beta\|X^{k+1}-X^{k}\|_{F},
	\end{equation} 
	and  note that
	$$\nabla_{\bm{\delta}}\hat{H} (X^{k+1},X^{k},\bm{\delta}^{k+1}) = 2\lambda \hat{\bm w}^{k+1}\circ\bm{\delta}^{k+1}.$$ 
	It then follows that 
	\begin{equation}\label{eq:third_term}
		\begin{aligned} 
			\ \| \nabla_{\bm{\delta}}\hat{H} (X^{k+1},X^{k},\bm{\delta}^{k+1})\|_{2} &\le \| \nabla_{\bm{\delta}}\hat{H} (X^{k+1},X^{k},\bm{\delta}^{k+1})\|_{1} \\
			&= \sum\limits_{i=1}^{r^{*}} 2\lambda  \hat{w}_{i}^{k+1} {\delta}_{i}^{k+1} \\
			&\overset{(a)}{\leq} \sum\limits_{i=1}^{r^{*}} 2\lambda (\tfrac{2\beta C_1}{\lambda}) \tfrac{\sqrt{\mu}}{1-\sqrt{\mu}} \left(\delta_{i}^{k}-\delta_{i}^{k+1}\right) \\
			&\le \tfrac{4\beta C_1\sqrt{\mu}}{1-\sqrt{\mu}} \left(\|\bm{\delta}^{k}\|_{1} - \|\bm{\delta}^{k+1}\|_{1} \right),
		\end{aligned} 
	\end{equation}
	where inequality $(a)$ holds by Proposition \ref{THM_away_zero}(ii) and  $\delta_{i}^{k+1}\le \sqrt{\mu}\delta_{i}^{k}, i\in[r^{*}] $.
	
	Therefore, combining \eqref{eq:first_term}, \eqref{eq:second_term} and \eqref{eq:third_term} gives 
	\begin{equation*}\label{KLacc_K1}
		\| \nabla \hat{H} (X^{k+1},X^{k},\bm{\delta}^{k+1})\|_{F} \le D  \left(\left\|X^{k}-X^{k+1}\right\|_{F}+\left\|X^{k-1}-X^{k}\right\|_{F} + \|\bm{\delta}^{k}\|_{1}-\|\bm{\delta}^{k+1}\|_{1} \right)
	\end{equation*}
	with $ D = \max \left(\lambda{D_p} +L_{f}+ \beta, (L_f+\beta)\bar{\alpha},2D_p\lambda\|\bm{\delta}^{0}\|_{\infty}+\frac{4\beta C\sqrt{\mu}}{1-\sqrt{\mu}} \right) < +\infty$. Furthermore, by \Cref{lem_Dec_acc} and \Cref{thm.update}, we know that $\lim\limits_{k\to+\infty} \Vert\nabla\hat{H} (X^{k+1},X^{k},\bm{\delta}^{k+1} )\Vert_F = 0$. This completes the proof of statement (i).

	(ii)  It is straightforward   from $D_{1}=\frac{\beta}{2} \left(1-\frac{3L_{f}+\beta}{\beta}\bar{\alpha}^{2}\right)>0$ by Lemma \ref{lem_Dec_acc}(i). 
	
	(iii) \Cref{Global_sub}(ii) shows $H(X^{k+1},X^{k},\bm{\delta}^{k+1} )\to\zeta$ as $k\to+\infty$ and It follows from Lemma \ref{lem_Dec_acc}(iii) and \Cref{Global_sub}(i)-(ii) that $\{\hat{H} (X^{k+1},Y^{k},\bm{\delta}^{k+1})\}$ uniquely converges. 
	
	(iv) By statement (iii), we know that
	\begin{equation*}
		\lim\limits_{k \to +\infty} \hat{H}(X^{k+1},X^{k},\bm{\delta}^{k+1}) = \hat{H}(X^{*},X^{*},\bm{0}) \equiv \zeta.
	\end{equation*}
	If $\hat{H} (X^{k+1},X^{k},\bm{\delta}^{k+1})=\zeta$ for each $k > \hat{k}$, then we know $X^{k+1} = X^{k} $ by statement (ii) after $\hat{k}$, indicating $X^{k} = X^{\hat{k}} \in \chi^{\infty}$. The proof then is complete. We otherwise have to consider the case in which $\hat{H} (X^{k+1},X^{k},\bm{\delta}^{k+1})>\zeta $ after $\hat{k}$.
	
	By \Cref{Uniform_KL} and \Cref{ASM_KL_acc}, we know that there exist a desingularizing function $\Phi$, $\eta>0$ and $\rho>0$ such that  
	\begin{equation}\label{ineq_KLacc_K1}
		\Phi{'} \left(\hat{H} (X,Y,\bm{\delta})-\zeta\right)  \|\nabla \hat{H} (X,Y,\bm{\delta})\| \ge 1,
	\end{equation}
	for all $ (X,Y,\bm{\delta})\in\mathbb{U} ( (X^{*},X^{*},\bm{0});\rho)\cap\left\{ (X,Y,\bm{\delta})\in\mathbb{S}:\zeta<\hat{H} (X^{k+1},X^{k},\bm{\delta}^{k+1})<\zeta+\eta\right\} $. Note that $X^{*} \in \chi^{\infty}$, we have
	\begin{equation*}
		\lim\limits_{k\to+\infty} \mathrm{dist} ( (X^{k},X^{k-1},\bm{\delta}^{k}),\Gamma) = 0,
	\end{equation*}
	meaning $\forall \rho > 0$, there exists $k_{1}\in\mathbb{N} $ such that  $\mathrm{dist} ( (X^{k},X^{k-1},\bm{\delta}^{k}),\Gamma) < \rho $ for all $k>k_{1} $.   
	Since $\{ \hat{H} (X^{k+1},X^{k},\bm{\delta}^{k+1})\} \to \zeta$ as $k \to +\infty$, we know that there exists $k_{2}\in\mathbb{N} $ such that $\zeta<\hat{H} (X^{k+1},X^{k},\bm{\delta}^{k+1})<\zeta+\eta $ for all $k>k_{2} $. Thus, we have from the smoothness of $\hat{H}$ that for any  $k>\max \left(k_{1},k_{2}\right)$, 
	\begin{equation}
		\Phi{'} \left(\hat{H} (X^{k+1},X^{k},\bm{\delta}^{k+1})-\zeta\right) \| \nabla \hat{H} (X^{k+1},X^{k},\bm{\delta}^{k+1})\| \ge 1.
	\end{equation} 
	Then, we have for any $k > \hat{k}$ that 
	\begin{equation}\label{eq:onebound}
		\begin{aligned}
			&  \left[\Phi \left(\hat{H} (X^{k},X^{k-1},\bm{\delta}^{k})-\zeta\right) - \Phi \left(\hat{H} (X^{k+1},X^{k},\bm{\delta}^{k+1})-\zeta\right)\right] \\
			&\times  D \left(\|X^{k-2}-X^{k-1}\|_{F} +\|X^{k-1}-X^{k}\|_{F} + \|\bm{\delta}^{k-1}\|_{1}-\|\bm{\delta}^{k}\|_{1}\right) \\
			\overset{(a)}{\geq}& \left[\Phi \left(\hat{H} (X^{k},X^{k-1},\bm{\delta}^{k})-\zeta\right) - \Phi \left(\hat{H} (X^{k+1},X^{k},\bm{\delta}^{k+1})-\zeta\right)\right] \\
			&\times  \|\nabla \hat{H} (X^{k},X^{k-1},\bm{\delta}^{k})\| \\
			\overset{(b)}{\geq}&\left[\hat{H} (X^{k},X^{k-1},\bm{\delta}^{k})-\hat{H} (X^{k+1},X^{k},\bm{\delta}^{k+1})\right] \Phi{'} \left(\hat{H} (X^{k},X^{k-1},\bm{\delta}^{k})-\zeta\right) \\
			&\times  \|\nabla \hat{H} (X^{k},X^{k-1},\bm{\delta}^{k})\| \\
			\overset{(c)}{\geq}& D_{1} \|X^{k}-X^{k-1}\|_{F}^{2} ,
	\end{aligned}\end{equation}   
	where the inequality $(a)$ holds by Lemma \ref{lem_KLacc_cvgc_cond}(i), the inequality $(b)$ makes use of the concavity of $\Phi$ and the inequality $(c)$ follows from \eqref{ineq_KLacc_K1} and Lemma \ref{lem_KLacc_cvgc_cond}(ii). Therefore, 
	\begin{equation}\label{eq_anotherbound}
		\begin{aligned}
			&\ \|X^{k}-X^{k-1}\|_{F} \\
			\overset{(a)}{\leq}&\ \sqrt{\frac{2D}{D_{1}}\left[\Phi \left(\hat{H} (X^{k},X^{k-1},\bm{\delta}^{k})-\zeta\right) - \Phi \left(\hat{H} (X^{k+1},X^{k},\bm{\delta}^{k+1})-\zeta\right)\right]} \\
			&\ \times \sqrt{\frac{1}{2} \left(\left\|X^{k-2}-X^{k-1}\right\|_{F} +\left\|X^{k-1}-X^{k}\right\|_{F} + \|\bm{\delta}^{k-1}\|_{1} - \|\bm{\delta}^{k}\|_{1}\right)} \\
			\overset{(b)}{\leq}&\ \frac{D}{D_{1}}\left[\Phi \left(\hat{H} (X^{k},X^{k-1},\bm{\delta}^{k})-\zeta\right) - \Phi \left(\hat{H} (X^{k+1},X^{k},\bm{\delta}^{k+1})-\zeta\right)\right] \\
			&\ + \frac{1}{4} \left(\left\|X^{k-2}-X^{k-1}\right\|_{F} +\left\|X^{k-1}-X^{k}\right\|_{F} + \|\bm{\delta}^{k-1}\|_{1}-\|\bm{\delta}^{k}\|_{1}\right),
		\end{aligned}
	\end{equation}
	where the inequality $(a)$ holds by \eqref{eq:onebound} and the inequality $(b)$ is true because of the AM-GM inequality. Then, subtracting $\frac{1}{2}\|X^{k}-X^{k-1}\|_{F} $ from both sides of  \eqref{eq_anotherbound}, we obtain 
	\begin{equation}\label{eq:thirdbound}
		\begin{aligned}
			\frac{1}{2}\|X^{k}-X^{k-1}\|_{F} \le & \frac{D}{D_{1}}\left[\Phi \left(\hat{H} (X^{k},X^{k-1},\bm{\delta}^{k})-\zeta\right) - \Phi \left(\hat{H} (X^{k+1},X^{k},\bm{\delta}^{k+1})-\zeta\right)\right] \\
			& + \frac{1}{4} \left(\|X^{k-2}-X^{k-1}\|_{F} - \|X^{k-1}-X^{k}\|_{F} + \|\bm{\delta}^{k-1}\|_{1}-\|\bm{\delta}^{k}\|_{1}\right).
		\end{aligned}
	\end{equation} 
	Summing up both sides of \eqref{eq:thirdbound} from $l = t$ to $k$, we have
	\begin{equation}\begin{aligned}
			&\ \frac{1}{2}\sum\limits_{l=t}^{k}\|X^l-X^{l-1}\|_{F}  \\
			\le&\ \frac{D}{D_{1}}\left[\Phi \left(\hat{H} (X^{t },X^{t -1},\bm{\delta}^{t })-\zeta\right) - \Phi \left(\hat{H} (X^{k+1},X^k,\bm{\delta}^{k+1})-\zeta\right)\right] \\
			&\ + \frac{1}{4} \left(\|X^{t -2}-X^{t -1}\|_{F} - \|X^{k -1}-X^k\|_{F} + \|\bm{\delta}^{t -1}\|_{1}-\|\bm{\delta}^{k}\|_{1}\right).
	\end{aligned}\end{equation}
	We then have $\bm{\delta}^{k}\to 0$ and $\|X^{k}-X^{k-1}\|_{F}\to 0$ by taking $k\to+\infty$ according to Lemma \ref{lem_Dec_acc} and \Cref{thm.update}(ii), respectively. Furthermore, we have from taking $k\to+\infty$ and continuity of $\Phi$ that $\Phi \left(\hat{H} (X^{k+1},X^k,\bm{\delta}^{k+1})-\zeta\right)\to\Phi (\zeta-\zeta)= \Phi(0)=0$ by \Cref{Def_desingularizing}(i). Therefore, for any $t > 0$, it holds that 
	\begin{equation}\label{eq_upbound_accsum}
		\begin{aligned} 
			T^{t } & =\sum\limits_{l=t }^{+\infty}\|X^l-X^{l-1}\|_{F} \\
			&\le  \frac{2D}{D_{1}}\Phi \left(\hat{H} (X^{t },X^{t -1},\bm{\delta}^{t })-\zeta\right) + \frac{1}{2} \left(\|X^{t -2}-X^{t -1}\|_{F} + \|\bm{\delta}^{t -1}\|_{1}\right) < +\infty,
		\end{aligned}
	\end{equation}   
	implying that $\{X^{k}\}$ is a Cauchy sequence and consequently converges uniquely. This completes the proof of all statements.
\end{proof}

Now we are ready to prove the convergence rate under the KL property. We mention that the ideas for the proof much follow those presented in \cite{attouch2009convergence,wen2018proximal}.

\begin{theorem}[Local convergence rate] 
	Let Assumptions \ref{ASM_fLip}-\ref{ASM_KL_acc} hold. Let $\{X^{k}\} $ be generated by \Cref{alg_acc} and converge to a critical point  $X^{*} \in \mathrm{crit} (F)$. Consider a desingularizing function of the form $\Phi(s) = cs^{1-\theta}$ where $c >0$ and {\L}ojasiewicz exponent $\theta \in [0,1)$. Then the following statements hold.
	\begin{itemize}
		\item[(i)] If $\theta=0 $, then there exist $\hat{k} \in \mathbb{N}$ such that $X^{k}\equiv X^{*} $ for all $k > \hat{k}$.
		\item[(ii)] If $\theta\in (0,\frac{1}{2}] $, there exist $\gamma\in (0,1) $ and $c_{0}, c_1>0 $ such that 
		\begin{equation}
			\|X^{k}-X^{*}\|_{F} \le c_{0}\gamma^{k} - c_{1}\|\bm{\delta}^{k}_{\mathcal{I}^{*}}\|_{1}
		\end{equation}
		for all sufficiently large $k $.
		\item[(iii)] If $\theta\in (\frac{1}{2},1) $, there exist $d_{0}, d_1>0 $ such that 
		\begin{equation}
			\|X^{k}-X^{*}\|_{F} \le d_{0}k^{-\frac{1-\theta}{2\theta-1}} - d_{1}\|\bm{\delta}^{k}_{\mathcal{I}^{*}}\|_{1}
		\end{equation}
		for all sufficiently large $k$. 
	\end{itemize}   
\end{theorem}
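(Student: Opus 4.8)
The plan is to follow the now-standard Kurdyka--{\L}ojasiewicz rate machinery of \cite{attouch2009convergence,wen2018proximal}, adapted so that the perturbation sequence $\bm{\delta}^{k}$ is carried through the bookkeeping. The backbone is that, since $X^{k}\to X^{*}$ by Lemma \ref{lem_KLacc_cvgc_cond}(iv), the triangle inequality yields $\|X^{k}-X^{*}\|_{F}\le T^{k+1}=\sum_{l=k+1}^{+\infty}\|X^{l}-X^{l-1}\|_{F}$, so it suffices to control the tail $T^{k+1}$. For this I would reuse the estimate \eqref{eq_upbound_accsum}, namely $T^{t}\le \frac{2D}{D_{1}}\Phi(r_{t})+\frac{1}{2}(\|X^{t-2}-X^{t-1}\|_{F}+\|\bm{\delta}^{t-1}\|_{1})$, writing $r_{t}:=\hat{H}(X^{t},X^{t-1},\bm{\delta}^{t})-\zeta\ge 0$ throughout. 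Note that in the reduced formulation $\bm{\delta}\in\mathbb{R}^{r^{*}}$ indexes exactly $\mathcal{I}^{*}$, so $\|\bm{\delta}^{k}\|_{1}=\|\bm{\delta}^{k}_{\mathcal{I}^{*}}\|_{1}$, and by Theorem \ref{thm.update} this quantity decays geometrically at rate $\sqrt{\mu}$ once $k$ is large, with the differences $\|\bm{\delta}^{k-1}\|_{1}-\|\bm{\delta}^{k}\|_{1}$ telescoping.

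First I would dispatch case (i). With $\Phi(s)=cs$ one has $\Phi'(s)\equiv c$, so whenever $r_{k}>0$ the KL inequality \eqref{ineq_KLacc_K1} forces $\|\nabla\hat{H}(X^{k},X^{k-1},\bm{\delta}^{k})\|\ge 1/c>0$, which contradicts $\|\nabla\hat{H}\|\to 0$ from Lemma \ref{lem_KLacc_cvgc_cond}(i). Hence $r_{k}=0$ for all large $k$, and the sufficient-decrease bound of Lemma \ref{lem_KLacc_cvgc_cond}(ii) then forces $\|X^{k}-X^{k-1}\|_{F}=0$, i.e.\ the finite termination $X^{k}\equiv X^{*}$.

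For cases (ii) and (iii) I would extract a genuine decay rate for $T^{t}$. The key auxiliary step combines the KL inequality written as $r_{k}^{\theta}\le c(1-\theta)\|\nabla\hat{H}(X^{k},X^{k-1},\bm{\delta}^{k})\|$ with the gradient bound of Lemma \ref{lem_KLacc_cvgc_cond}(i) shifted by one index, $\|\nabla\hat{H}(X^{k},X^{k-1},\bm{\delta}^{k})\|\le D(\|X^{k}-X^{k-1}\|_{F}+\|X^{k-1}-X^{k-2}\|_{F}+\|\bm{\delta}^{k-1}\|_{1}-\|\bm{\delta}^{k}\|_{1})$, to produce $\Phi(r_{k})=c\,r_{k}^{1-\theta}\le C(\|X^{k}-X^{k-1}\|_{F}+\|X^{k-1}-X^{k-2}\|_{F}+\|\bm{\delta}^{k-1}\|_{1}-\|\bm{\delta}^{k}\|_{1})^{(1-\theta)/\theta}$. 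Substituting this into \eqref{eq_upbound_accsum} and writing the first-order differences as $\|X^{t-2}-X^{t-1}\|_{F}=T^{t-1}-T^{t}$ yields a self-referential inequality for the tail whose resolution is exactly the classical Attouch--Bolte dichotomy: when $\theta\in(0,\tfrac{1}{2}]$ the exponent $(1-\theta)/\theta\ge 1$ permits linearization near the limit and gives a contraction of the form $T^{t}\le C''(T^{t-1}-T^{t+1})+(\text{geometric }\bm{\delta}\text{ terms})$, hence geometric decay $T^{t}=O(\gamma^{t})$; when $\theta\in(\tfrac{1}{2},1)$ one invokes the standard monotone real-sequence lemma to obtain the polynomial rate $T^{t}=O\big(t^{-(1-\theta)/(2\theta-1)}\big)$.

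The crucial extra bookkeeping, and the step I expect to be the main obstacle, is the clean treatment of $\bm{\delta}^{k}$, which enters both additively (in the tail estimate) and through telescoping differences (in the gradient bound) and decays geometrically \emph{independently} of $\theta$. I would therefore track the augmented tail $\tilde{T}^{k+1}:=T^{k+1}+c_{1}\|\bm{\delta}^{k}\|_{1}$; since the $\bm{\delta}$-contribution is summable and geometric it is absorbed into the same recursion, and because $\|X^{k}-X^{*}\|_{F}+c_{1}\|\bm{\delta}^{k}\|_{1}\le \tilde{T}^{k+1}$ with $\|\bm{\delta}^{k}\|_{1}\ge 0$, a rate bound on $\tilde{T}^{k+1}$ immediately delivers the stated inequalities. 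In case (ii) one takes the final $\gamma\in(0,1)$ to be at least $\sqrt{\mu}$ so that the geometric $\bm{\delta}$-tail is dominated, yielding $\|X^{k}-X^{*}\|_{F}\le c_{0}\gamma^{k}-c_{1}\|\bm{\delta}^{k}_{\mathcal{I}^{*}}\|_{1}$; in case (iii) the geometric $\bm{\delta}$-decay is strictly faster than the polynomial $X$-rate and is harmless, giving $\|X^{k}-X^{*}\|_{F}\le d_{0}k^{-(1-\theta)/(2\theta-1)}-d_{1}\|\bm{\delta}^{k}_{\mathcal{I}^{*}}\|_{1}$. The delicate points are verifying that the recursion still closes once the extra telescoping $\bm{\delta}$ terms are present and confirming that, in the sublinear regime, the geometric $\bm{\delta}$-tail is absorbed by rather than competing with the polynomial rate.
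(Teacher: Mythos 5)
Your proposal is correct and follows essentially the same route as the paper's proof: the same contradiction argument via $\Phi'(s)\equiv c$ for case (i), the same combination of the tail bound \eqref{eq_upbound_accsum} with the KL inequality and the gradient bound of Lemma \ref{lem_KLacc_cvgc_cond}(i), and the same device of tracking an augmented tail $T^{k}+c_{1}\|\bm{\delta}^{k}\|_{1}$ so that the telescoping geometric $\bm{\delta}$-terms close the recursion and the negative $\|\bm{\delta}^{k}_{\mathcal{I}^{*}}\|_{1}$ term falls out when passing back to $\|X^{k}-X^{*}\|_{F}$. The only cosmetic differences are that the paper merges the $\bm{\delta}$-terms into the recursion (so no condition like $\gamma\ge\sqrt{\mu}$ is ever needed) and resolves the two-step recursion in case (iii) by splitting into even and odd subsequences before invoking the standard Attouch--Bolte sequence lemma.
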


\begin{proof} 
	Since $X^{k}\to X^{*} $ as $k \to +\infty$, we have from Lemma \ref{lem_KLacc_cvgc_cond}(iv) and \eqref{eq_upbound_accsum} that
	\begin{equation}
		\|X^{k}-X^{*}\|_{F} = \|X^{k}-\lim\limits_{t\to+\infty}X^{t}\|_{F} = \|\lim\limits_{t\to+\infty}\sum\limits_{l=k}^{t} (X^{l}-X^{l+1})\|_{F}\le T^{k},
	\end{equation}
	and 
	\begin{equation}\label{eqKLacc_Rrbound}
		T^{k}\le \frac{2D}{D_{1}} \Phi \left(\hat{H} (X^{k},X^{k-1},\bm{\delta}^{k})-\zeta\right)
		+\frac{1}{2} (T^{k-2}-T^{k-1}) +\frac{1}{2}\|\bm{\delta}^{k-1}\|_{1}.
	\end{equation} 

	(i) 
	If $\theta=0 $, then $\Phi(s)=cs$ and  $\Phi{'} (s)=c$. we claim that there exists $\hat{k} \in \mathbb{N}$ such that $\hat{H} (X^{\hat{k}},X^{\hat{k}-1},\bm{\delta}^{\hat{k}}) =\zeta $. Seeking a contradiction, suppose this is not true, so $\hat{H} (X^{k}, X^{k-1},\bm{\delta}^{k}) > \zeta$ for all $k\in\mathbb{N}$. Since $\lim_{k\to+\infty} X^{k} = X^{*}$ and $\{\hat{H} (X^{k},X^{k-1},\bm{\delta}^{k})\}$  monotonically decreases to $\zeta $ by Lemma \ref{lem_KLacc_cvgc_cond}(ii). We have from the KL inequality that for all sufficiently large $k$
	\begin{equation}
		\| \nabla \hat{H} (X^{k},X^{k-1},\bm{\delta}^{k})\|_F \ge \frac{1}{c} > 0
	\end{equation}   
	with $\Phi{'} (s)=c $.
	This is a contradiction with $\|\nabla \hat{H} (X^{k},X^{k-1},\bm{\delta}^{k})\| \to 0 $ by Lemma \ref{lem_KLacc_cvgc_cond}(i).
	Thus, there exists $\hat{k}\in\mathbb{N} $ such that $\hat{H} (X^{k},X^{k-1},\bm{\delta}^{k})=\hat{H} (X^{\hat{k}},X^{\hat{k}-1},\bm{\delta}^{\hat{k}} ) \equiv\zeta $ for all $k\ge \hat{k} $. Hence, we conclude from Lemma \ref{lem_KLacc_cvgc_cond}(ii) that $X^{k}=X^{*}=X^{\hat{k}} $ for all $k>\hat{k} $, {i.e.}, the sequence converges in a finite number of iterations. This proves statement (i). 
	
	(ii)-(iii) If $\theta\in (0,1) $, then $\Phi{'} (s)=c(1-\theta)s^{-\theta}$. If there exists $\hat{k}\in\mathbb{N} $ such that $ \hat{H} (X^{\hat{k}},X^{\hat{k}-1},\bm{\delta}^{\hat{k}}) =\zeta $, then we know from Lemma \ref{lem_KLacc_cvgc_cond}(ii) that $X^{k+1} = X^{k}$ for all $k >\hat{k}$, indicating $X^k \equiv X^{\hat{k}} \in \chi^{\infty}$.  Therefore,  we need only to consider the case in which $\hat{H} (X^{k}, X^{k-1},\bm{\delta}^{k})>\zeta $ for all $k\in\mathbb{N}$.  
	
	Note that \Cref{ASM_KL_acc} implies
	\begin{equation} \label{eqKLacc_KLASM1}
		c (1-\theta) (\hat{H} (X^{k},X^{k-1},\bm{\delta}^{k})-\zeta)^{-\theta}  \|\nabla \hat{H} (X^{k},X^{k-1},\bm{\delta}^{k})\|_{F} \ge 1,
	\end{equation}
	for all $k>\hat{k} $ from Lemma \ref{lem_KLacc_cvgc_cond}(iv).
	On the other hand, we obtain from Lemma \ref{lem_KLacc_cvgc_cond}(i) that
	\begin{equation}
		\|\nabla \hat{H} (X^{k}, X^{k-1}, \bm{\delta}^{k})\|_F  \le D  \left(T^{k-2}-T^{k}+ \|\bm{\delta}^{k-1}\|_{1}-\|\bm{\delta}^{k}\|_{1} \right).
	\end{equation}   
	This, together with \eqref{eqKLacc_KLASM1}, yields
	\begin{equation}\label{ineq_KLacc_power_sk}
		(\hat{H} (X^{k},X^{k-1},\bm{\delta}^{k})-\zeta)^{\theta} \le c D (1-\theta)  \left( T^{k-2}-T^{k}+\|\bm{\delta}^{k-1}\|_{1}-\|\bm{\delta}^{k}\|_{1}\right).
	\end{equation}
	Taking a power of $\frac{1-\theta}{\theta} $ to both sides of \eqref{ineq_KLacc_power_sk}, we have for all $k \geq \hat{k}$ that  
	\begin{equation}
		\begin{aligned}
			\Phi (\hat{H} (X^{k},X^{k-1},\bm{\delta}^{k})-\zeta) =& c (\hat{H} (X^{k},X^{k-1},\bm{\delta}^{k})-\zeta)^{1-\theta} \\
			\le&\ c\left[c D(1-\theta)  \left(T^{k-2}-T^{k}+\|\bm{\delta}^{k-1}\|_{1}-\|\bm{\delta}^{k}\|_{1} \right)\right]^{\frac{1-\theta}{\theta}}\\
			\le&\ c\left[c D(1-\theta) \left(T^{k-2}-T^{k}+\|\bm{\delta}^{k-1}\|_{1}\right)\right]^{\frac{1-\theta}{\theta}}.  
	\end{aligned} \end{equation}
	This, together with  \eqref{eqKLacc_Rrbound}, yields 
	\begin{equation}\label{KLacc_obj_uppersk}
		\begin{aligned}
			T^{k}\le& \nu_{1} \left(T^{k-2}-T^{k}+\|\bm{\delta}^{k-1}\|_{1}\right)^{\frac{1-\theta}{\theta}} +
			\frac{1}{2} \left(T^{k-2}-T^{k}+\|\bm{\delta}^{k-1}\|_{1}\right)
	\end{aligned}\end{equation}
	with $\nu_{1} = \frac{2cD}{D_1} [cD (1-\theta)]^{\frac{1-\theta}{\theta}} >0$.
	
	It then follows from \eqref{KLacc_obj_uppersk}  that 
	\begin{equation}\label{KLacc_uppersk_par}
		\begin{aligned}
			&\ T^{k}+\frac{\sqrt{\mu}}{1-\mu}\|\bm{\delta}^{k}\|_{1} \\
			\le &\ \nu_{1}  \left(T^{k-2}-T^{k}+\|\bm{\delta}^{k-1}\|_{1}\right)^{\frac{1-\theta}{\theta}} + \frac{1}{2} \left(T^{k-2}-T^{k}+\|\bm{\delta}^{k-1}\|_{1}\right)+\frac{\sqrt{\mu}}{1-\mu}\|\bm{\delta}^{k}\|_{1}\\
			\overset{(a)}{\leq} &\ \nu_{1}  \left(T^{k-2}-T^{k}+\|\bm{\delta}^{k-1}\|_{1}\right)^{\frac{1-\theta}{\theta}} + \frac{1}{2} \left(T^{k-2}-T^{k}+\|\bm{\delta}^{k-1}\|_{1}\right)+\frac{\mu}{1-\mu}\|\bm{\delta}^{k-1}\|_{1}\\
			\overset{(b)}{\leq} &\ \nu_{1}  \left(T^{k-2}-T^{k}+\|\bm{\delta}^{k-1}\|_{1}\right)^{\frac{1-\theta}{\theta}} + \nu_{2} \left(T^{k-2}-T^{k}+\|\bm{\delta}^{k-1}\|_{1}\right),
	\end{aligned} \end{equation}
	where inequality $(a)$ holds simply due to $ \delta_{i}^{k}\le\sqrt{\mu} \delta_{i}^{k-1}, i \in [r^{*}]$ and inequality $(b)$ with $\nu_{2} = \frac{1}{2}+\frac{\mu}{1-\mu} > 0$ is true because $T^{k-2}-T^{k} \ge 0$ and $\mu \in (0,1)$.
	
	Now consider $\theta\in (0,\frac{1}{2}]$.  It follows from Lemma \ref{lem_Dec_acc}(ii) and \Cref{thm.update}(ii) that
	\begin{equation*}
		\lim\limits_{k\to+\infty}T^{k-2}-T^{k}+\|\bm{\delta}^{k-1}\|_{1} =0. 
	\end{equation*}
	Since $\frac{1-\theta}{\theta}\ge 1$, we thus know for sufficiently large $k$ that
	\begin{equation}
		\left(T^{k-2}-T^{k}+\|\bm{\delta}^{k-1}\|_{1}\right)^{\frac{1-\theta}{\theta}} \le  \left( T^{k-2}-T^{k}+\|\bm{\delta}^{k-1}\|_{1}\right).
	\end{equation}  
	This, together with \eqref{KLacc_uppersk_par}, yields
	\begin{equation}\label{eq:oneBound}
		T^{k}+\frac{\sqrt{\mu}}{1-\mu}\|\bm{\delta}^{k}\|_{1} \le  (\nu_{1}+\nu_{2}) \left(T^{k-2}-T^{k}+\|\bm{\delta}^{k-1}\|_{1}\right).
	\end{equation}
	
	On the other hand, by \Cref{thm.update}(ii), we have that $ \delta_{i}^{k}\le\sqrt{\mu} \delta_{i}^{k-1}$ and $\delta_{i}^{k-1}\le\sqrt{\mu} \delta_{i}^{k-2}$, $ i \in [r^{*}]$, and it implies that  
	\begin{equation} \label{eps_delta_item_ineq}
		\delta_{i}^{k-1}\le \frac{\sqrt{\mu}}{1-\mu} ( \delta_{i}^{k-2}- \delta_{i}^{k}),\ i \in [r^{*}],
	\end{equation} 
	leading to
	\begin{equation}\label{ineq_norm_delta}
		\|\bm{\delta}^{k-1}\|_{1}\le \frac{\sqrt{\mu}}{1-\mu} (\|\bm{\delta}^{k-2}\|_{1}-\|\bm{\delta}^{k}\|_{1}).
	\end{equation}
	Therefore, we have from \eqref{eq:oneBound} for any $k\geq\hat{k}$ that 
	\begin{equation}
		\begin{aligned}
			&\ T^{k}+\frac{\sqrt{\mu}}{1-\mu}\|\bm{\delta}^{k}\|_{1} \\   
			\overset{(a)}{\leq} &\ (\nu_{1}+\nu_{2})  \left( T^{k-2} + \frac{\sqrt{\mu}}{1-\mu}\Vert \bm{\delta}^{k-2}\Vert_1 - \left( T^{k} + \frac{\sqrt{\mu}}{1-\mu}\Vert \bm{\delta}^{k}\Vert_1\right) \right) \\
			\leq &\ \left(\frac{\nu_{1}+\nu_{2}}{1+\nu_{1}+\nu_{2}}\right)\left( T^{k-2} + \frac{\sqrt{\mu}}{1-\mu}\Vert \bm{\delta}^{k-2}\Vert_1\right)  \\
			\le &\  \left(\frac{\nu_{1}+\nu_{2}}{1+\nu_{1}+\nu_{2}}\right)^{\lfloor\frac{k-\hat{k}}{2}\rfloor}
			\left(T^{[ (k-\hat{k})\,\mathrm{mod}\, 2]+\hat{k}}+\frac{\sqrt{\mu}}{1-\mu}\|\bm{\delta}^{[ (k-\hat{k})\,\mathrm{mod}\, 2]+\hat{k}}\|_{1}\right) \\
			\le &\ \left(\frac{\nu_{1}+\nu_{2}}{1+\nu_{1}+\nu_{2}}\right)^{\frac{k-\hat{k}}{2}} \left(T^{\hat{k}}+\frac{\sqrt{\mu}}{1-\mu}\|\bm{\delta}^{\hat{k}}\|_{1} \right),
	\end{aligned}\end{equation} 
	where inequality $(a)$ holds by \eqref{ineq_norm_delta}. 
	We hence have for any $k \geq \hat{k}$ that
	\begin{equation}
		\|X^{k}-X^{*}\|_{F} \le T^{k} \le c_{0}\gamma^{k}-c_{1}\|\bm{\delta}^{k}\|_{1}
	\end{equation} 
	with 
	\begin{equation*}
		\gamma=\sqrt{\frac{\nu_{1}+\nu_{2}}{\nu_{1}+\nu_{2}+1}},c_{0} = \frac{T^{\hat{k}}+\frac{\sqrt{\mu}}{1-\mu}\|\bm{\delta}^{\hat{k}}\|_{1}}{\gamma^{\hat{k}}},c_{1} = \frac{\sqrt{\mu}}{1-\mu}.
	\end{equation*}
	This completes the proof of statement (ii).
	
	Consider now $\theta\in (\frac{1}{2},1)$. We have from  $\frac{1-\theta}{\theta}<1$ and $\lim\limits_{k\to+\infty}T^{k-2}-T^{k}+\|\bm{\delta}^{k-1}\|_{1} =0$  for sufficiently large $k$  that
	\begin{equation}
		\left( T^{k-2}-T^{k}+\|\bm{\delta}^{k-1}\|_{1}\right) \le  \left(T^{k-2}-T^{k}+\|\bm{\delta}^{k-1}\|_{1}\right)^{\frac{1-\theta}{\theta}}.
	\end{equation}
	This, together with \eqref{KLacc_uppersk_par}, gives
	\begin{equation}\label{eq_Klacc_sumbound2}
		T^{k}+\frac{\sqrt{\mu}}{1-\mu}\|\bm{\delta}^{k}\|_{1} \le  (\nu_{1}+\nu_{2}) \left(T^{k-2}-T^{k}+\|\bm{\delta}^{k-1}\|_{1}\right)^{\frac{1-\theta}{\theta}}.
	\end{equation}
	Raising a power of $\frac{\theta}{1-\theta} $ to the both sides of \eqref{eq_Klacc_sumbound2} and considering \eqref{ineq_norm_delta} gives 
	\begin{equation}
		\left(T^{k}+\frac{\sqrt{\mu}}{1-\mu}\|\bm{\delta}^{k}\|_{1}\right)^{\frac{\theta}{1-\theta}} \le \nu_{3}\left[ (T^{k-2}+\frac{\sqrt{\mu}}{1-\mu}\|\bm{\delta}^{k-2}\|_{1})- (T^{k} +\|\bm{\delta}^{k}\|_{1})\right],
	\end{equation}
	where $\nu_{3}= \left(\nu_{1}+v_{2}\right)^\frac{\theta}{1-\theta} $.
	
	Split the sequence $\{k_{2},k_{2}+1,\cdots\}$ into even and odd subsequences. For the even subsequnce, define $ \Delta_{t}:=T^{2t}+\frac{\sqrt{\mu}}{1-\mu}\|\bm{\delta}^{2t}\|_{1} $ for $t\ge \lceil\frac{\hat{k}}{2}\rceil:=N_{1} $. Following from the techniques presented in the proofs of \cite[Theorem 4]{Zeng_Acc_2022} and \cite[Theorem 2]{attouch2009convergence},  we have
	\begin{equation}\label{ineq_KLacc_cvgcrate2}
		\Delta_{k} \le  \left( \Delta_{N_{1}-1}^{\frac{1-2\theta}{1-\theta}} + \nu (k-N_{1})\right)^{-\frac{1-\theta}{2\theta-1}} \le d_{2}k^{-\frac{1-\theta}{2\theta-1}} ,
	\end{equation}
	for some $d_{2}>0 $. As for the odd subsequence of $\{k_{2},k_{2}+1,\cdots\} $,  define $ \Delta_{t}:=T^{2t+1}+\frac{\sqrt{\mu}}{1-\mu}\|\bm{\delta}^{2t+1}\|_{1} $. We know that \eqref{ineq_KLacc_cvgcrate2} still holds.  
	Therefore, for all sufficiently large and even number $k $, it holds that
	\begin{equation}
		\|X^{k}-X^{*}\|_{F}\le T^{k}=  \Delta_{\frac{k}{2}}-\frac{\sqrt{\mu}}{1-\mu}\|\bm{\delta}^{k}\|_{1} \le 2^{\frac{1-\theta}{2\theta-1}}d_{2} k^{-\frac{1-\theta}{2\theta-1}}-\frac{\sqrt{\mu}}{1-\mu}\|\bm{\delta}^{k}\|_{1}.
	\end{equation}
	
	For all sufficiently large and odd numbers $k $,
	\begin{equation}
		\|X^{k}-X^{*}\|_{F}\le T^{k} =  \Delta_{\frac{k-1}{2}}-\frac{\sqrt{\mu}}{1-\mu}\|\bm{\delta}^{k}\|_{1} \le 2^{\frac{1-\theta}{2\theta-1}}d_{2}  (k-1)^{-\frac{1-\theta}{2\theta-1}}-\frac{\sqrt{\mu}}{1-\mu}\|\bm{\delta}^{k}\|_{1}.
	\end{equation}
	
	Overall, we thus have for any sufficiently large $k $  that
	\begin{equation}
		\|X^{k}-X^{*}\|_{F}\le d_{0} k^{-\frac{1-\theta}{2\theta-1}}-d_{1}\|\bm{\delta}^{k}\|_{1},
	\end{equation}
	where
	$
	d_{0} =2^{\frac{1-\theta}{2\theta-1}}d_{2}\max \left(1,2^{\frac{1-\theta}{2\theta-1}}\right) \,\text{and}\, d_{1} =\frac{\sqrt{\mu}}{1-\mu}.
	$ 
	The proof is complete.
\end{proof}


\section{Numerical Experiments}\label{Sec_numerical}
In this section, we conduct low-rank matrix completion tasks on both synthetic data and natural images to showcase the effectiveness and efficiency of the proposed EIRNRI algorithm. The presented examples revolve around addressing the matrix completion problem, defined as follows:
\begin{equation}\label{exp_pro_MCrand}
	\min\limits_{X\in\mathbb{R}^{m\times n}}\, \frac{1}{2}\|M-\mathbf{P}_{\Omega} (X)\|_{F}^{2} + \lambda\|X\|_{p}^{p},
\end{equation}
where $M\in\mathbb{R}^{m\times n} $, $\Omega $ is the set of indices of samples, and $\mathbf{P}_{\Omega}$ denotes the projection onto the subspace of sparse matrices with nonzeros restricted to the index set $\Omega$. In this problem, the Lipschitz constant $L_{f} = 1$. 

The codes of all methods tested in this section are implemented in Matlab on a desktop with an Intel Core i5-8500 CPU  (3.00 GHz) and 24GB RAM running 64-bit Windows 10 Enterprise. We have made the code publicly available at \url{https://github.com/Optimizater/Low-rank-optimization-with-rank-identification}.

In our experiments, the relative error is defined as
\begin{equation}
	\text{RelErr} = \frac{\|X^{k}-X^{*}\|_{F} }{\|X^{*}\|_{F}}.
\end{equation}
To verify whether $X$ is a critical point of problem \eqref{exp_pro_MCrand},  we use the distance defined as $\text{dist} (0,\partial F (X)) = \|U_{r}^{\top}\nabla f (X)V_{r}+\lambda p \Sigma^{p-1} \|_{F}$ define the relative distance error as $\text{RelDist} \le \frac{\text{dist} (0,\partial F (X))}{\|M\|_{F}}.$
We adopt the same criterion as used in  \cite{ExpCriterion_2010,opt_simu_svd_2017}, and say a matrix $X^{*} $ is successfully recovered by $X^{k}$ if the corresponding relative error or the relative distance is less than $10^{-5}$.

\subsection{Synthetic Data}
For synthetic data, our target is to recover the generated matrix $X^{*}$ from $M$. Thus, we call a correct low-rank detecting (CLD) that the rank of the termination point is the same as the rank of $X^{*}$. 
In our experiments, we compare the performance of IRNRI and EIRNRI with PIRNN \cite{opt_simu_svd_2017}  for solving \eqref{exp_pro_MCrand} on random data, where IRNRI does not consider the extrapolation technique.
We set $m = n = 150,r=5,10,15$ for the matrix $X^{*}$, select $p=0.5$ for Schatten-$p$ norm.  The rank $r$ matrix $X^{*}$ is generated by $X^{*} = BC$, where $B\in\mathbb{R}^{m\times r}$ and $C\in\mathbb{R}^{r\times n}$ are generated randomly with {i.i.d.} standard Gaussian entries. 
We randomly sample a subset $\Omega$ with sampling ratio $\text{SR}$($|\Omega|$ satisfies $|\Omega| = \lceil\text{SR}*{(mn)}\rceil$), and select three values $0.2, 0.5$ and $0.8$, and then the observed matrix is $M = \mathbf{P}_{\Omega} (X^{*})$.

All algorithms are initialized with a random Gaussian matrix $X^{0}$, and terminate when $\text{RelErr}\le\textit{opttol}$ or $\text{RelDist}\le\textit{opttol}$ or the number of iteration exceeds the preset limit $\textit{itmax}$. Furthermore, we add another termination condition $\|X^{k+1}-X^{k}\|_{\infty} \le \textit{KLopt}$ by Lemma \ref{lem_KLacc_cvgc_cond}.
Unless otherwise mentioned, we use the following parameters to run the experiments: $\beta=1.1 > L_f$,  $\epsilon_{i}^{0}=1,i\in[m]$, $\mu=0.1$ for IRNRI and EIRNRI, $\textit{opttol}=10^{-5} $, $\textit{itmax}=1\times 10^{3}$ and $\textit{KLopt} = 10^{-7}$.  For EIRNRI,  $\alpha = 0.7$ is roughly tuned for the best performance correspondingly. 
For PIRNN,  $\epsilon_i, i\in[m]$ are fixed as sufficiently small values: $\epsilon_i =  10^{-3}$.  

The number of problems (out of 2000 problems in total) converging to a solution with the correct rank satisfying ${\Rank}(X_{true}) = {\Rank}(X^{*}):=r^{*}$ are depicted 
in Figure \ref{fig_MatrixIden}.  The average of the final relative errors of each algorithm for each case 
are shown in Table \ref{Tab_MatrixIden}. 
We can observe from these results that our proposed algorithms can always return 
a solution with low relative error and the correct rank.

\begin{figure}[H]
	\centering{
		\subfigure[The number of CLD with $r^{*}=5$.]{
			\includegraphics[width=0.3\linewidth]{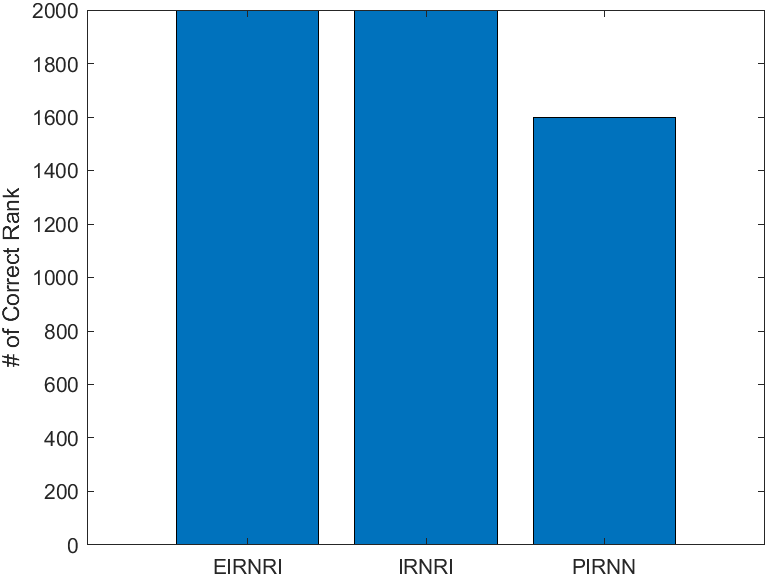}
		}
		\subfigure[The number of CLD with $r^{*}=10$.]{
			\includegraphics[width=0.3\linewidth]{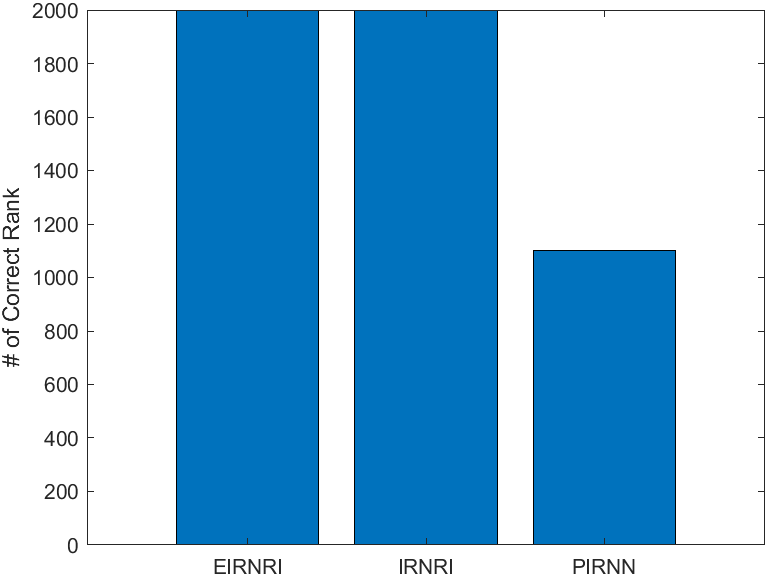}
		}
		\subfigure[The number of CLD with $r^{*}=15$.]{
			\includegraphics[width=0.3\linewidth]{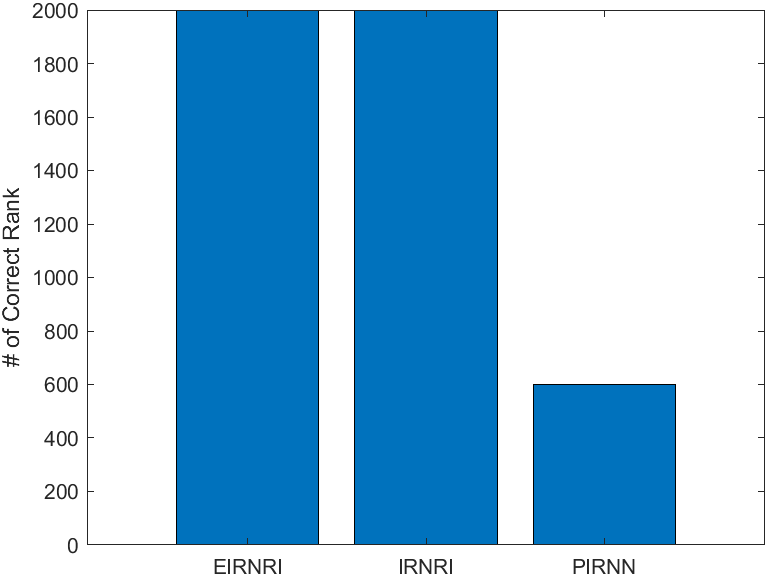}
		}
	}
	\caption{The number of problems that achieve model identification for different $r^{*}$. For each $r^{*}$, 2000 problems are generated and set $\lambda = 10^{-1}\|X^{*}\|_{\infty}$ for those problems. We set different $\epsilon = 10^{-3}$ for PIRNN and the default values for our algorithm.}
	\label{fig_MatrixIden}
\end{figure}

\begin{table}[htbp]
	\centering
	\begin{tabular}{cccc}
		\toprule
		& $r^* = 5$ & $r^*=10$ & $r^* = 15$ \\
		\hline
		IRNRI  & $6.42\times 10{-7}$   & $2.67\times 10^{-5}$ & $9.15\times 10^{-4}$   \\
		EIRNRI & $6.42\times 10{-7}$   & $2.67\times 10^{-5}$ & $9.15\times 10^{-4}$   \\
		PIRNN  & $5.13\times 10^{-6}$  & $3\times 10^{-4}$    & $1.1\times 10^{-2}$   \\
		\bottomrule
	\end{tabular}
	\caption{The average of the final relative errors of each algorithm with different $r^{*}$.}
	\label{Tab_MatrixIden}
\end{table}

To compare the efficiency of the algorithms,  we also plot the evolution of the relative distance for each case in Figure \ref{fig_LLrate}.
We can see that the proposed IRNRI achieves roughly the same speed as PIRNN, while the proposed  EIRNRI significantly outperforms PIRNN.  


\begin{figure}[H]
	\centering{
		\subfigure[$r^*=5$]{
			\includegraphics[width=0.3\linewidth]{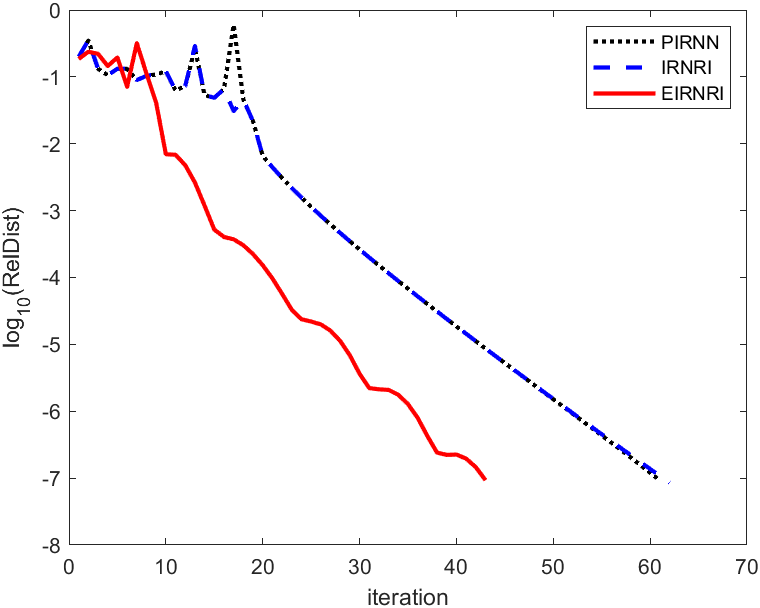}
		}
		\subfigure[$r^*=10$]{
			\includegraphics[width=0.3\linewidth]{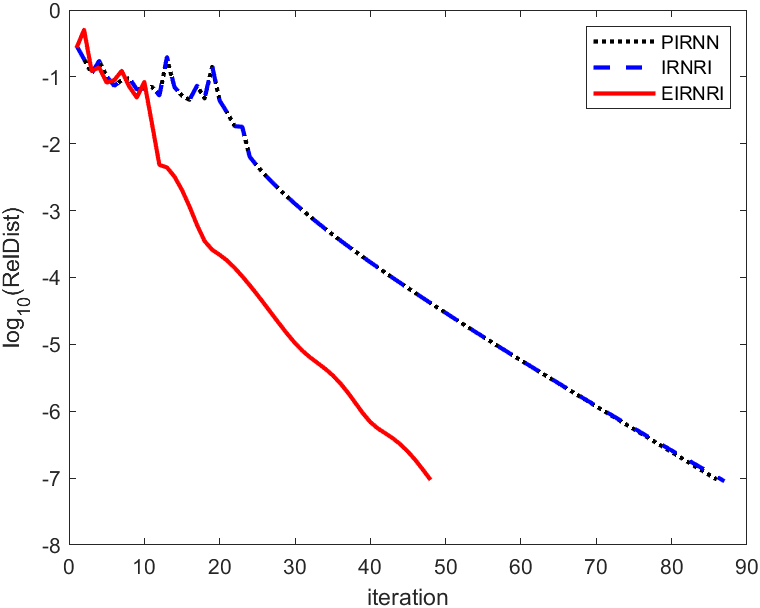}
		}
		\subfigure[$r^*=15$]{
			\includegraphics[width=0.3\linewidth]{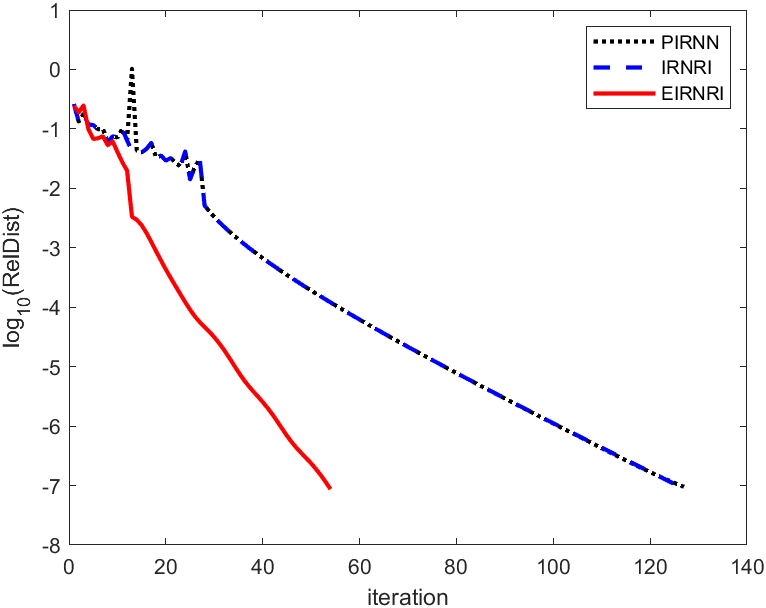}
		}
	}
	\caption{Comparison of matrix recovery on synthetic data with initial point $X^{0}$ satisfies ${\Rank}(X^{0}) = r^*$, initial parameters satisfy $\epsilon_{i}^{0} = \epsilon = 10^{-3}$. 
	}
	\label{fig_LLrate}
\end{figure}

Since Algorithm \ref{alg_acc} involves a new parameter $\alpha$ to accelerate the convergence, 
we set the different values of $\alpha =\{ 0,0.1,0.3,0.5,0.7,0.9\}$ to test the sensitivity to $\alpha_{k}$ for the EIRNRI algorithm. 
The experiments show the best value is always around $\alpha= 0.7$.
\begin{figure}[H]
	\centering{
		\subfigure[$r^*=5$]{
			\includegraphics[width=0.3\linewidth]{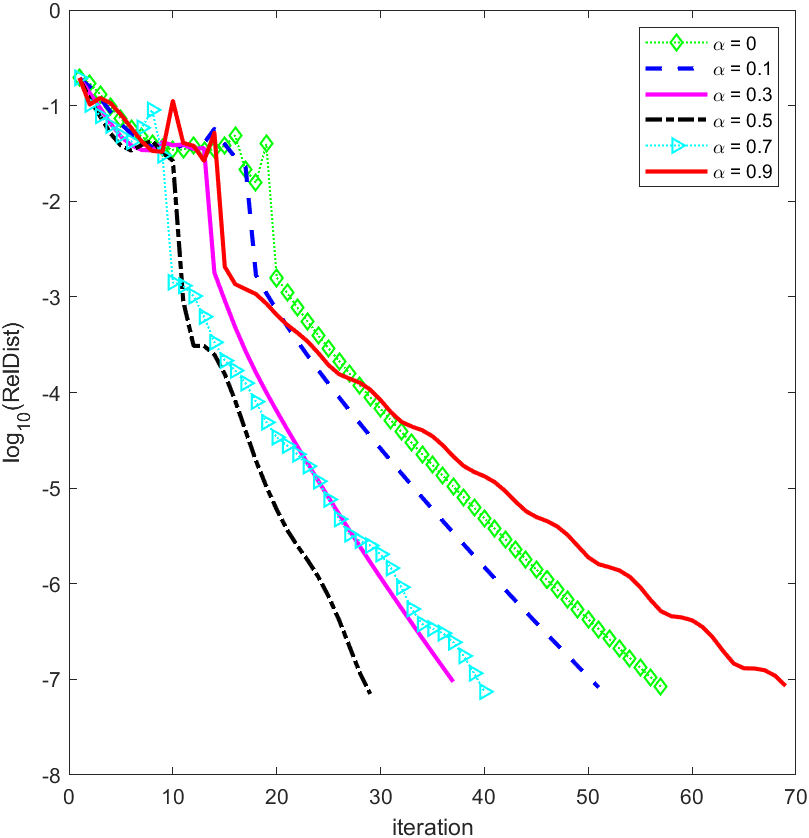}
		}
		\subfigure[$r^*=10$]{
			\includegraphics[width=0.3\linewidth]{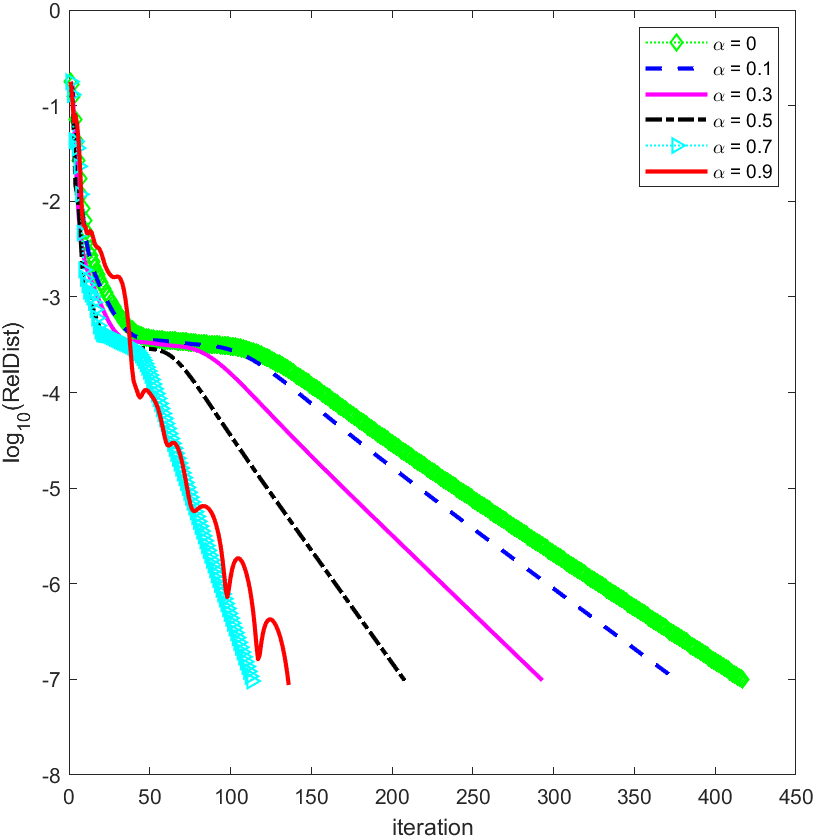}
		}
		\subfigure[$r^*=15$]{
			\includegraphics[width=0.3\linewidth]{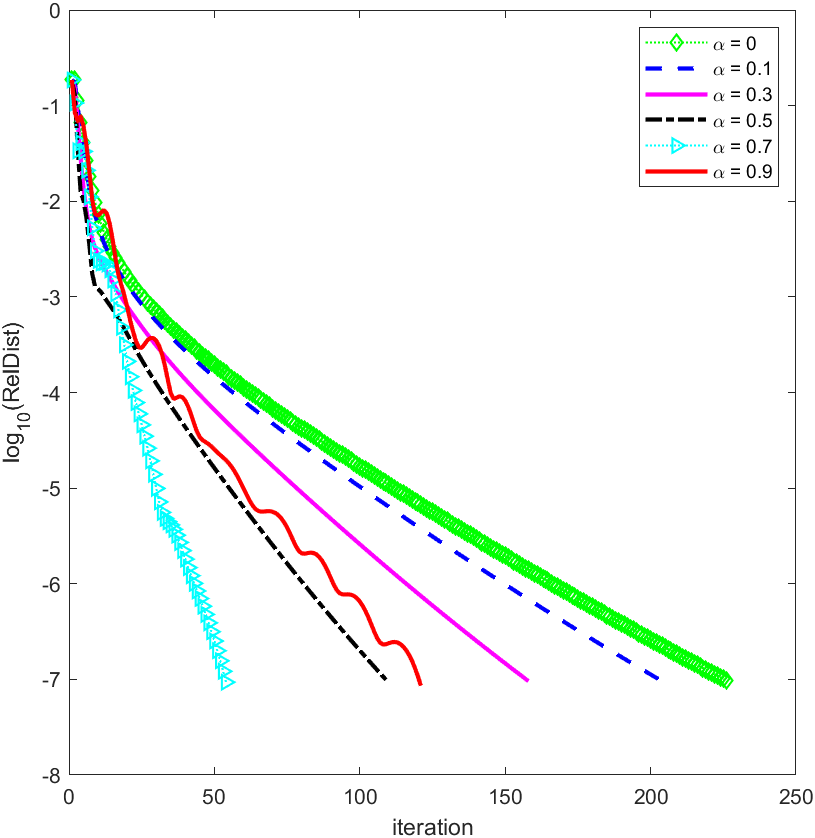}
		}
	}
	\caption{The performance of EIRNRI with different $\alpha$ for three kinds of problems.
	}
	\label{fig_stvAlp}
\end{figure}

\subsection{Application to Image Recovery}

In this section, we compare  our method with four solvers, PIRNN \cite{opt_simu_svd_2017}, AIRNN \cite{Alg_conflict_AIRNN_2021}\footnote{\href{https://github.com/ngocntkt/AIRNN}{Available at https://github.com/ngocntkt/AIRNN}},
Sc$p$ \cite{Relax_NCVX_CNN_RPCA_2013}\footnote{\href{https://github.com/liguorui77/scpnorm}{Available at https://github.com/liguorui77/scpnorm}
},  
and FGSR$p$ \cite{LRMR_GroupSparse_2019}\footnote{\href{https://github.com/udellgroup/Codes-of-FGSR-for-effecient-low-rank-matrix-recovery}{Available at https://github.com/udellgroup/Codes-of-FGSR-for-effecient-low-rank-matrix-recovery}
}.
We consider the real images though they are usually not low rank, but the top singular values of the real images dominate the main information. The nature images are of scale $300 \times 300 \times 3 $, we observe the top singular values first and sample the $80\% $ of the elements uniformly for the random mask.  The block mask is also been used.   We set $\lambda = 0.5$ as the regularization parameter. 
The iteration is terminated when  $\text{RelDist}\le{10}^{-5}$ or $\textit{KLopt}\le 10^{-5} $ or the number of iteration exceeds \textit{itmax}$=10^{3}$.  
We set  $\beta = 1.1$ for the three algorithms.
In order to guarantee the accuracy of the solution returned by  PIRNN,  we set $\epsilon = 10^{-4} $ for PIRNN, and  $\bm{\epsilon}^{0} = 1, \mu=0.1 $ for IRNRI and EIRNRI. As for EIRNRI, we set $\alpha=0.7$. The default parameter values for other algorithms are used.
The performance of all algorithms is compared on two measurements:  
(1) the difference of the rank of the final iterate  and $X^{*}$,
(2) the peak signal-to-noise ratio (PSNR), defined as 
\begin{equation}
	\text{PSNR} (M,X^{*}) = 10 \times \log_{10}\frac{255^{2}}{\frac{1}{3mn}\sum_{i=1}^{3}\|X_{i}^{*}-M_{i}\|_{F}^{2}}, 
\end{equation}
where  $M $ is the restored image and $X^{*} $  is the original image.

\begin{figure}[htbp]
	\centering{
		\subfigure[Original image]{\includegraphics[width=0.2\linewidth]{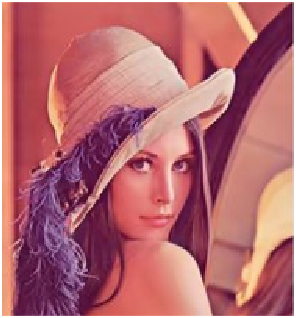}}
		\subfigure[Low-rank image]{\includegraphics[width=0.2\linewidth]{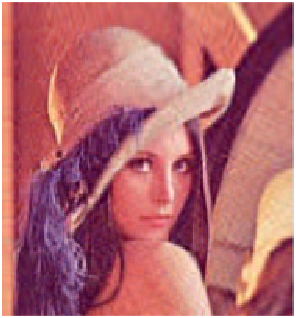}}
		\subfigure[Random mask]{\includegraphics[width=0.2\linewidth]{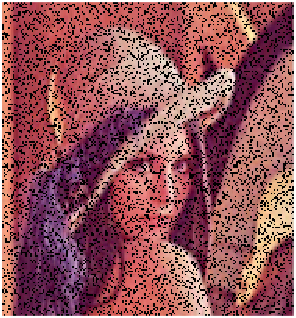}}
		\subfigure[PIRNN]{\includegraphics[width=0.2\linewidth]{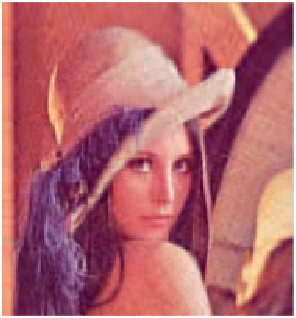}}
		\subfigure[AIRNN]{\includegraphics[width=0.2\linewidth]{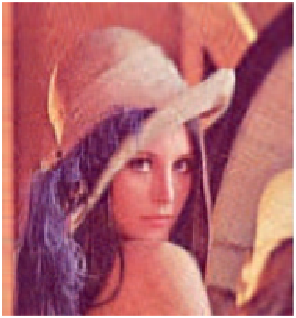}}
		\subfigure[EIRNRI]{\includegraphics[width=0.2\linewidth]{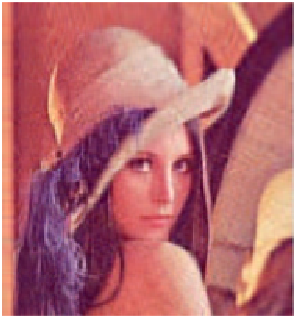}}
		\subfigure[Sc$p$]{\includegraphics[width=0.2\linewidth]{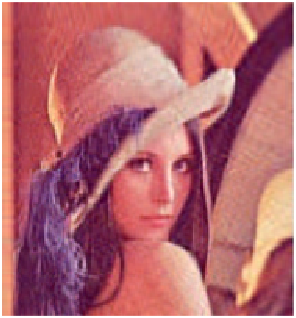}}
		\subfigure[FGSR$p$]{\includegraphics[width=0.2\linewidth]{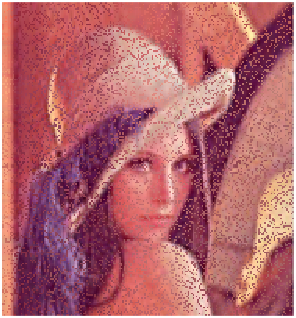}}
	}
	\caption{The performance of different methods in image recovery with a random mask(SR= $0.8$).
		(a) Original image, ${\Rank}(X) = 300$; 
		(b) Low-rank image, ${\Rank}(X^{*}) = 30$;
		(c) Noised picture;
		(d) PIRNN:   ${\Rank}(X) = 30$,  PSNR=$28.855$;
		(e) AIRNN:   ${\Rank}(X) = 36$,  PSNR=$28.854$;   
		(f) EIRNRI:  ${\Rank}(X) = 30$,  PSNR=$28.855$;    
		(g) Sc$p$:   ${\Rank}(X) = 187$,  PSNR=$28.971$;   
		(h) FGSR$p$: ${\Rank}(X) = 187$,  PSNR=$21.806$;   
	}
	\label{Relimg_Lena}
\end{figure}

\begin{table}[bthp]
	\begin{tabular}{ccccccccccc}
		\toprule
		& \multicolumn{2}{c}{PIRNN} & \multicolumn{2}{c}{AIRNN} & \multicolumn{2}{c}{EIRNRI} & \multicolumn{2}{c}{Sc$p$} & \multicolumn{2}{c}{FGSR$p$} \\
		\hline
		& PSNR      & Rank          & PSNR          & Rank       & PSNR      & Rank           & PSNR             & Rank & PSNR         & Rank      \\
		\hline
		$r^{*}=15$ & 24.971    & \textbf{15}   & 24.107        & 29         & 24.971    & \textbf{15}    & \textbf{24.981}  & 187  & 20.918       & 187       \\
		$r^{*}=20$ & 26.547    & \textbf{20}   & 26.37         & 24         & 26.547    & \textbf{20}    & \textbf{26.571}  & 187  & 21.433       & 187       \\
		$r^{*}=25$ & 27.805    & \textbf{25}   & 27.804        & 25         & 27.805    & \textbf{25}    & \textbf{27.86}   & 187  & 21.687       & 187       \\
		$r^{*}=30$ & 28.86     & \textbf{30}   & 28.263        & 36         & 28.86     & \textbf{30}    & \textbf{28.975}  & 187  & 21.885       & 187       \\
		$r^{*}=35$ & 29.845    & \textbf{35}   & 27.614        & 47         & 29.846    & \textbf{35}    & \textbf{30.063}  & 187  & 22.064       & 187       \\
		$r^{*}=40$ & 30.715    & \textbf{40}   & 30.019        & 43         & 30.716    & \textbf{40}    & \textbf{31.099}  & 187  & 22.133       & 187       \\
		\bottomrule
	\end{tabular}
	\caption{The performance of different methods to recover random mask images. Bold values correspond to the best results for each task.}
	\label{tab_img_recovery_rand_Lena}
\end{table}

Figure \ref{Relimg_Lena} depicts the recovered results for random cover,  and Table \ref{tab_img_recovery_rand_Lena} shows 
the quality of each recovered result.  We can see that all those algorithms can recover the image under the random mask, and our proposed algorithm can always find a lower rank than the others. 

Figure \ref{Relimg_R1} depicts the recovered results for block mask, and  Table \ref{tab_img_recovery_block_R1} shows 
the quality of each recovered result. We can observe that  AIRNN can pursue low-rank targets but it can not achieve a credible recovery for block mask image recovery.  Though Sc$p$ makes the biggest PSNR in both two tasks, the restored image always has a higher rank than the low-rank target image. EIRNRI restores a low-rank image with a similar PSNR with Sc$p$, indicating that it can recover an accurate low-rank structure of the image.

\begin{figure}[ht]
	\centering{
		\subfigure[Original image]{\includegraphics[width=0.2\linewidth]{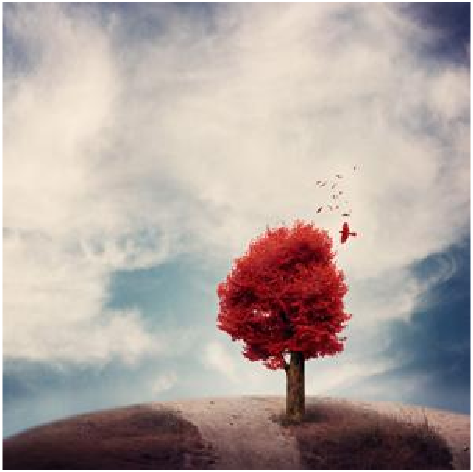}}
		\subfigure[Low-rank image]{\includegraphics[width=0.2\linewidth]{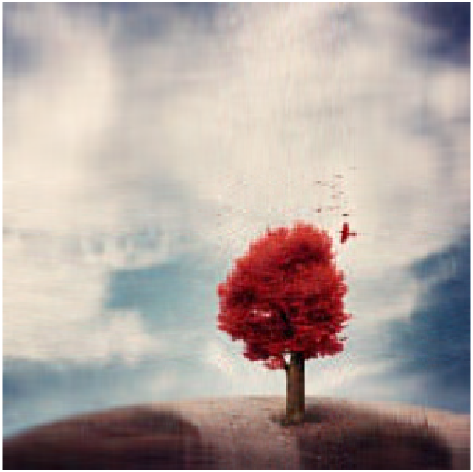}}
		\subfigure[Random mask]{\includegraphics[width=0.2\linewidth]{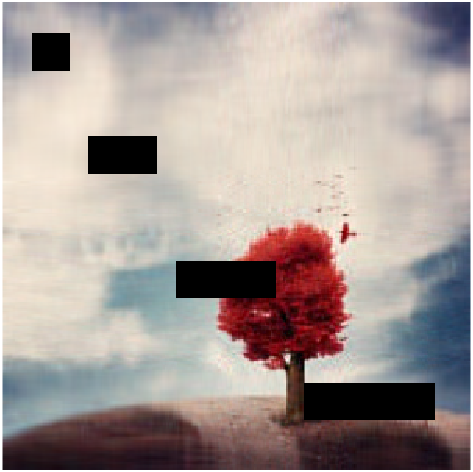}}
		\subfigure[PIRNN]{\includegraphics[width=0.2\linewidth]{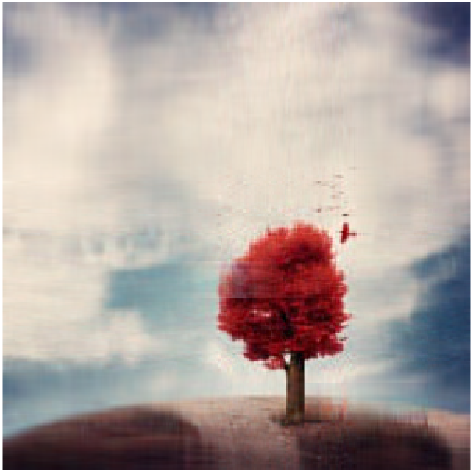}}
		\subfigure[AIRNN]{\includegraphics[width=0.2\linewidth]{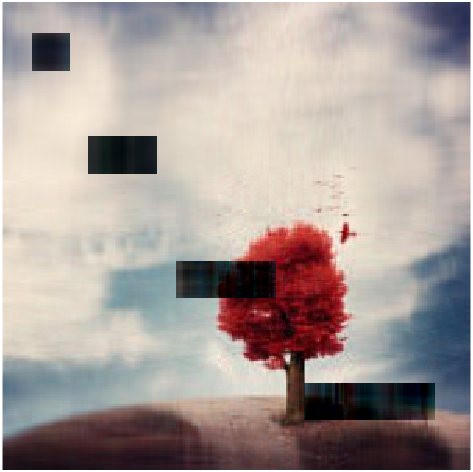}}
		\subfigure[EIRNRI]{\includegraphics[width=0.2\linewidth]{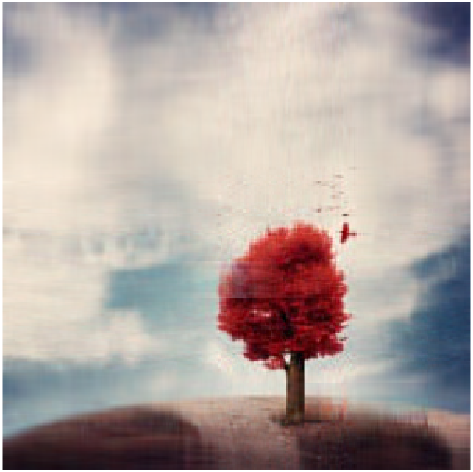}}
		\subfigure[Sc$p$]{\includegraphics[width=0.2\linewidth]{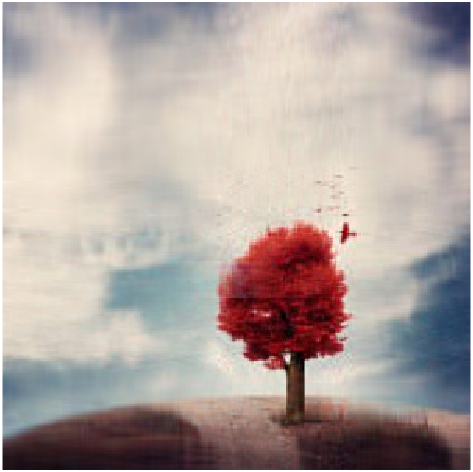}}
		\subfigure[FGSR$p$]{\includegraphics[width=0.2\linewidth]{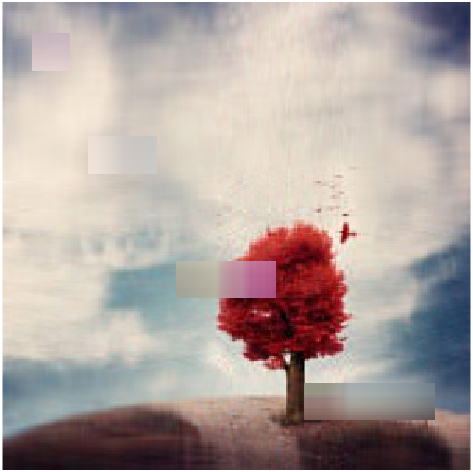}}
	}
	\caption{The performance of different methods in image recovery with block mask. 
		(a) Original image, ${\Rank}(X) = 300$; 
		(b) Low-rank image, ${\Rank}(X^{*}) = 30$;
		(c) Noised picture; 
		(d) PIRNN:   ${\Rank}(X) = 30$,  PSNR=$32.696$;
		(e) AIRNN:   ${\Rank}(X) = 32$,  PSNR=$16.423$;
		(f) EIRNRI:  ${\Rank}(X) = 30$,  PSNR=$32.654$;
		(g) Sc$p$:   ${\Rank}(X) = 126$,  PSNR=$33.237$;
		(h) FGSR$p$: ${\Rank}(X) = 126$,  PSNR=$23.132$;
	}
	\label{Relimg_R1}
\end{figure}

\begin{table}[htbp]
	\begin{tabular}{ccccccccccc}
		\toprule
		& \multicolumn{2}{c}{PIRNN} & \multicolumn{2}{c}{AIRNN} & \multicolumn{2}{c}{EIRNRI} & \multicolumn{2}{c}{Sc$p$} & \multicolumn{2}{c}{FGSR$p$} \\
		\hline
		& PSNR         & Rank       & PSNR          & Rank       & PSNR      & Rank           & PSNR            & Rank  & PSNR         & Rank      \\
		\hline
		$r^{*}=15$ & 30.848    & \textbf{15}   & 16.276        & 19         & 30.844    & \textbf{15}    & \textbf{31.003}  & 75   & 26.307       & 75        \\
		$r^{*}=20$ & 31.893    & \textbf{20}   & 16.305        & 24         & 31.887    & \textbf{20}    & \textbf{32.185}  & 100  & 26.687       & 100       \\
		$r^{*}=25$ & 32.468    & \textbf{25}   & 16.356        & 28         & 32.448    & \textbf{25}    & \textbf{32.881}  & 121  & 26.946       & 121       \\
		$r^{*}=30$ & 32.696    & \textbf{30}   & 16.423        & 32         & 32.654    & \textbf{30}    & \textbf{33.237}  & 126  & 27.132       & 126       \\
		$r^{*}=35$ & 32.794    & \textbf{35}   & 16.433        & 36         & 32.703    & \textbf{35}    & \textbf{33.378}  & 131  & 27.272       & 131       \\
		$r^{*}=40$ & 32.465    & \textbf{38}   & 16.502        & 39         & 32.357    & \textbf{38}    & \textbf{33.044}  & 136  & 27.38        & 136       \\
		\bottomrule
	\end{tabular}
	\caption{The performance of different methods in image recovery with block mask. Bold values correspond to the best results for each task.
	}
	\label{tab_img_recovery_block_R1}
\end{table}

\section{Conclusion}\label{Sec_Conclusion}
In this paper, we have proposed, analyzed, and implemented iteratively reweighted Nuclear norm methods 
for solving the Schatten-$p$ norm regularized low-rank optimization. 
Our work features two main novelties. The first is the exhibition of a rank identification property, enabling the algorithm to identify the rank of stationary points of the concerned problems in finite iterations. Leveraging this property, we  have designed a novel updating strategy 
for $\epsilon_i$, so that $\epsilon_i$ associated with the positive 
singular values can be driven to zero rapidly and those associated with zero singular values can be 
automatically fixed as constants after a finite number of iterations. The crucial role of this strategy is that the algorithm 
eventually behaves like a truncated weighted Nuclear norm method so that the techniques for smooth 
algorithms can be directly applied including acceleration techniques and convergence analysis. 

The convergence properties established for our algorithm are illustrated empirically on 
test sets comprising both synthetic and real datasets. We remark, however, that several practical considerations remain to be addressed for enhancing the performance of the proposed method.  One potential avenue for improvement involves integrating the rank identification property into the implementation. Once the correct rank has been identified within a finite number of iterations, the algorithm can be terminated and subsequently switched to a traditional Frobenius recovery with a fixed rank to further enhance the quality of the recovered solution. We defer the exploration of this aspect to future research endeavors. 


\bibliographystyle{plain}
\bibliography{references}

\begin{thebibliography}{10}

\bibitem{attouch2009convergence}
Hedy Attouch and J{\'e}r{\^o}me Bolte.
\newblock On the convergence of the proximal algorithm for nonsmooth functions
  involving analytic features.
\newblock {\em Mathematical Programming}, 116:5--16, 2009.

\bibitem{attouch2010proximal}
H{\'e}dy Attouch, J{\'e}r{\^o}me Bolte, Patrick Redont, and Antoine Soubeyran.
\newblock Proximal alternating minimization and projection methods for
  nonconvex problems: An approach based on the kurdyka-{\l}ojasiewicz
  inequality.
\newblock {\em Mathematics of Operations Research}, 35(2):438--457, 2010.

\bibitem{bolte2014proximal}
J{\'e}r{\^o}me Bolte, Shoham Sabach, and Marc Teboulle.
\newblock Proximal alternating linearized minimization for nonconvex and
  nonsmooth problems.
\newblock {\em Mathematical Programming}, 146(1-2):459--494, 2014.

\bibitem{chen2013reduced}
Kun Chen, Hongbo Dong, and Kung-Sik Chan.
\newblock Reduced rank regression via adaptive nuclear norm penalization.
\newblock {\em Biometrika}, 100(4):901--920, 2013.

\bibitem{chiang2018using}
Kai-Yang Chiang, Inderjit~S Dhillon, and Cho-Jui Hsieh.
\newblock Using side information to reliably learn low-rank matrices from
  missing and corrupted observations.
\newblock {\em Journal of Machine Learning Research}, 19(1):3005--3039, 2018.

\bibitem{fan2001variable}
Jianqing Fan and Runze Li.
\newblock Variable selection via nonconcave penalized likelihood and its oracle
  properties.
\newblock {\em Journal of the American Statistical Association},
  96(456):1348--1360, 2001.

\bibitem{LRMR_GroupSparse_2019}
Jicong Fan, Lijun Ding, Yudong Chen, and Madeleine Udell.
\newblock Factor group-sparse regularization for efficient low-rank matrix
  recovery.
\newblock {\em Advances in Neural Information Processing Systems},
  32:5104--5114, 2019.

\bibitem{friedman2012fast}
Jerome~H Friedman.
\newblock Fast sparse regression and classification.
\newblock {\em International Journal of Forecasting}, 28(3):722--738, 2012.

\bibitem{gao2011feasible}
Cuixia Gao, Naiyan Wang, Qi~Yu, and Zhihua Zhang.
\newblock A feasible nonconvex relaxation approach to feature selection.
\newblock {\em Proceedings of the AAAI Conference on Artificial Intelligence},
  25(1):356--361, 2011.

\bibitem{KL_definition_Hilbert}
Guillaume Garrigos.
\newblock {\em Descent dynamical systems and algorithms for tame optimization
  and multi-objective problems.}
\newblock PhD thesis, Universit{\'e} de Montpellier; Universidad Tecnica
  Federico Santa Maria, 2015.

\bibitem{ge2022fast}
Zhili Ge, Xin Zhang, and Zhongming Wu.
\newblock A fast proximal iteratively reweighted nuclear norm algorithm for
  nonconvex low-rank matrix minimization problems.
\newblock {\em Applied Numerical Mathematics}, 179:66--86, 2022.

\bibitem{geman1995nonlinear}
Donald Geman and Chengda Yang.
\newblock Nonlinear image recovery with half-quadratic regularization.
\newblock {\em IEEE Transactions on Image Processing}, 4(7):932--946, 1995.

\bibitem{ModelID_ProxGD_L1_2011}
Warren~L Hare.
\newblock Identifying active manifolds in regularization problems.
\newblock In {\em Fixed-Point Algorithms for Inverse Problems in Science and
  Engineering}, pages 261--271. Springer, 2011.

\bibitem{hare2007identifying}
Warren~L Hare and Adrian~S Lewis.
\newblock Identifying active manifolds.
\newblock {\em Algorithmic Operations Research}, 2(2):75--82, 2007.

\bibitem{horn2012matrix}
Roger~A Horn and Charles~R Johnson.
\newblock {\em Matrix analysis}.
\newblock Cambridge University Press, 2012.

\bibitem{review_LRR_2021}
Zhanxuan Hu, Feiping Nie, Rong Wang, and Xuelong Li.
\newblock Low rank regularization: A review.
\newblock {\em Neural Networks}, 136:218--232, 2021.

\bibitem{LRMR_APP2ImgProcess_TIP2014}
Chen Huang, Xiaoqing Ding, Chi Fang, and Di~Wen.
\newblock Robust image restoration via adaptive low-rank approximation and
  joint kernel regression.
\newblock {\em IEEE Transactions on Image Processing}, 23(12):5284--5297, 2014.

\bibitem{indyk2019learning}
Piotr Indyk, Ali Vakilian, and Yang Yuan.
\newblock Learning-based low-rank approximations.
\newblock {\em Advances in Neural Information Processing Systems}, 32, 2019.

\bibitem{jun2019bilinear}
KS~Jun, R~Willett, R~Nowak, and S~Wright.
\newblock Bilinear bandits with low-rank structure.
\newblock {\em Proceedings of Machine Learning Research}, 97, 2019.

\bibitem{klopfenstein2020model}
Quentin Klopfenstein, Quentin Bertrand, Alexandre Gramfort, Joseph Salmon, and
  Samuel Vaiter.
\newblock Model identification and local linear convergence of coordinate
  descent.
\newblock {\em arXiv preprint arXiv:2010.11825}, 2020.

\bibitem{lee2016llorma}
Joonseok Lee and Seungyeon Kim.
\newblock Llorma: Local low-rank matrix approximation.
\newblock {\em Journal of Machine Learning Research}, 17:1--24, 2016.

\bibitem{lewis2002active}
Adrian~S Lewis.
\newblock Active sets, nonsmoothness, and sensitivity.
\newblock {\em SIAM Journal on Optimization}, 13(3):702--725, 2002.

\bibitem{NA_sgv_1}
Adrian~S Lewis and Hristo~S Sendov.
\newblock Nonsmooth analysis of singular values. part i: Theory.
\newblock {\em Set-Valued Analysis}, 13(3):213--241, 2005.

\bibitem{ModelID_LLcvgc_L1_2014}
Jingwei Liang, Jalal Fadili, and Gabriel Peyr{\'e}.
\newblock Local linear convergence of forward--backward under partial
  smoothness.
\newblock {\em Advances in Neural Information Processing systems}, 27, 2014.

\bibitem{ModelID_LLcvgc_L1_2017}
Jingwei Liang, Jalal Fadili, and Gabriel Peyr{\'e}.
\newblock Activity identification and local linear convergence of
  forward--backward-type methods.
\newblock {\em SIAM Journal on Optimization}, 27(1):408--437, 2017.

\bibitem{lin2019non}
Zhipeng Lin, Zhenyu Zhao, Tingjin Luo, Wenjing Yang, Yongjun Zhang, and Yuhua
  Tang.
\newblock Non-convex transfer subspace learning for unsupervised domain
  adaptation.
\newblock {\em 2019 IEEE International Conference on Multimedia and Expo
  (ICME)}, pages 1468--1473, 2019.

\bibitem{liu2015low}
Meng Liu, Yong Luo, Dacheng Tao, Chao Xu, and Yonggang Wen.
\newblock Low-rank multi-view learning in matrix completion for multi-label
  image classification.
\newblock {\em Proceedings of the AAAI Conference on Artificial Intelligence},
  29(1), 2015.

\bibitem{ge_nn_LRMM_Canyi_2014}
Canyi Lu, Jinhui Tang, Shuicheng Yan, and Zhouchen Lin.
\newblock Generalized nonconvex nonsmooth low-rank minimization.
\newblock In {\em Proceedings of the IEEE Conference on Computer Vision and
  Pattern Recognition}, pages 4130--4137, 2014.

\bibitem{ge_GSVT_LRMM_Canyi_2015}
Canyi Lu, Changbo Zhu, Chunyan Xu, Shuicheng Yan, and Zhouchen Lin.
\newblock Generalized singular value thresholding.
\newblock In {\em Proceedings of the AAAI Conference on Artificial
  Intelligence}, volume~29, 2015.

\bibitem{IRSVM_LuZhaoSong_2014}
Zhaosong Lu, Yong Zhang, and Jian Lu.
\newblock $\ell_p$ regularized low-rank approximation via iterative reweighted
  singular value minimization.
\newblock {\em Computational Optimization and Applications}, 68(3):619--642,
  2017.

\bibitem{LRMM_APP2QoSDATA_IEEE2017}
Xin Luo, MengChu Zhou, Shuai Li, Yunni Xia, Zhu-Hong You, QingSheng Zhu, and
  Hareton Leung.
\newblock Incorporation of efficient second-order solvers into latent factor
  models for accurate prediction of missing qos data.
\newblock {\em IEEE Transactions on Cybernetics}, 48(4):1216--1228, 2017.

\bibitem{ModelID_LASSO_AccCD_2018}
Mathurin Massias, Alexandre Gramfort, and Joseph Salmon.
\newblock Celer: a fast solver for the lasso with dual extrapolation.
\newblock In {\em International Conference on Machine Learning}, pages
  3315--3324. PMLR, 2018.

\bibitem{nesterov2003introductory}
Yurii Nesterov.
\newblock {\em Introductory lectures on convex optimization: A basic course},
  volume~87.
\newblock Springer Science \& Business Media, 2003.

\bibitem{nesterov1983method}
Yurii~E Nesterov.
\newblock A method for solving the convex programming problem with convergence
  rate $o(1/k^{2})$.
\newblock In {\em Dokl. Akad. Nauk SSSR}, volume 269, pages 543--547, 1983.

\bibitem{ostrowski1973solution}
Alexander~M Ostrowski.
\newblock {\em Solution of equations in Euclidean and Banach spaces}.
\newblock Academic Press, 1973.

\bibitem{pal2023online}
Soumyabrata Pal and Prateek Jain.
\newblock Online low rank matrix completion.
\newblock In {\em The Eleventh International Conference on Learning
  Representations}, 2022.

\bibitem{lee2023accelerating}
Ching pei Lee, Ling Liang, Tianyun Tang, and Kim-Chuan Toh.
\newblock Accelerating nuclear-norm regularized low-rank matrix optimization
  through burer-monteiro decomposition.
\newblock {\em arXiv preprint arXiv:2204.14067}, 2023.

\bibitem{Alg_conflict_AIRNN_2021}
Duy~Nhat Phan and Thuy~Ngoc Nguyen.
\newblock An accelerated irnn-iteratively reweighted nuclear norm algorithm for
  nonconvex nonsmooth low-rank minimization problems.
\newblock {\em Journal of Computational and Applied Mathematics}, 396:113602,
  2021.

\bibitem{ExpCriterion_2010}
Benjamin Recht, Maryam Fazel, and Pablo~A Parrilo.
\newblock Guaranteed minimum-rank solutions of linear matrix equations via
  nuclear norm minimization.
\newblock {\em SIAM Review}, 52(3):471--501, 2010.

\bibitem{rockafellar2009variational}
R~Tyrrell Rockafellar and Roger J-B Wets.
\newblock {\em Variational analysis}, volume 317.
\newblock Springer Science \& Business Media, 2009.

\bibitem{Relax_NCVX_CNN_RPCA_2013}
Qian Sun, Shuo Xiang, and Jieping Ye.
\newblock Robust principal component analysis via capped norms.
\newblock In {\em Proceedings of the 19th ACM SIGKDD international conference
  on Knowledge discovery and data mining}, pages 311--319, 2013.

\bibitem{opt_simu_svd_2017}
Tao Sun, Hao Jiang, and Lizhi Cheng.
\newblock Convergence of proximal iteratively reweighted nuclear norm algorithm
  for image processing.
\newblock {\em IEEE Transactions on Image Processing}, 26(12):5632--5644, 2017.

\bibitem{tong2021accelerating}
Tian Tong, Cong Ma, and Yuejie Chi.
\newblock Accelerating ill-conditioned low-rank matrix estimation via scaled
  gradient descent.
\newblock {\em Journal of Machine Learning Research}, 22(1):6639--6701, 2021.

\bibitem{trzasko2008highly}
Joshua Trzasko and Armando Manduca.
\newblock Highly undersampled magnetic resonance image reconstruction via
  homotopic $\ell_0$-minimization.
\newblock {\em IEEE Transactions on Medical imaging}, 28(1):106--121, 2008.

\bibitem{general_svd_dec}
Charles~F Van~Loan.
\newblock Generalizing the singular value decomposition.
\newblock {\em SIAM Journal on Numerical Analysis}, 13(1):76--83, 1976.

\bibitem{Zeng_Acc_2022}
Hao Wang, Hao Zeng, and Jiashan Wang.
\newblock An extrapolated iteratively reweighted $\ell_1$ method with
  complexity analysis.
\newblock {\em Computational Optimization and Applications}, 83(3):967--997,
  2022.

\bibitem{zenghao_lp2l1_adaEps_paper1}
Hao Wang, Hao Zeng, Jiashan Wang, and Qiong Wu.
\newblock Relating $\ell_{p}$ regularization and reweighted $\ell_{1}$
  regularization.
\newblock {\em Optimization Letters}, 15(8):2639--2660, 2021.

\bibitem{NonNon_AdaIRWM_zhang_wang_2021}
Hao Wang, Fan Zhang, Yuanming Shi, and Yaohua Hu.
\newblock Nonconvex and nonsmooth sparse optimization via adaptively iterative
  reweighted methods.
\newblock {\em Journal of Global Optimization}, 81:717--748, 2021.

\bibitem{wang2019robust}
Lei Wang, Bangjun Wang, Zhao Zhang, Qiaolin Ye, Liyong Fu, Guangcan Liu, and
  Meng Wang.
\newblock Robust auto-weighted projective low-rank and sparse recovery for
  visual representation.
\newblock {\em Neural Networks}, 117:201--215, 2019.

\bibitem{wen2018proximal}
Bo~Wen, Xiaojun Chen, and Ting~Kei Pong.
\newblock A proximal difference-of-convex algorithm with extrapolation.
\newblock {\em Computational Optimization and Applications}, 69:297--324, 2018.

\bibitem{intro_TK2019}
Peiran Yu and Ting~Kei Pong.
\newblock Iteratively reweighted $\ell_1$ algorithms with extrapolation.
\newblock {\em Computational Optimization and Applications}, 73(2):353--386,
  2019.

\bibitem{zeng2023proximal}
Chao Zeng.
\newblock Proximal linearization methods for schatten $p$-quasi-norm
  minimization.
\newblock {\em Numerische Mathematik}, 153(1):213--248, 2023.

\bibitem{zhang2010nearly}
Cun-Hui Zhang.
\newblock Nearly unbiased variable selection under minimax concave penalty.
\newblock {\em Annals of Statistics}, 38(2):894--942, 2010.

\bibitem{LRMR_APP2ImgProcess_SP2020}
Fujun Zhao, Jigen Peng, and Angang Cui.
\newblock Design strategy of thresholding operator for low-rank matrix recovery
  problem.
\newblock {\em Signal Processing}, 171:107510, 2020.

\end{thebibliography}

\end{document}